 \newtheorem{theorem}{Theorem}[section]
 \newtheorem{lemma}[theorem]{Lemma}
 \newtheorem{proposition}[theorem]{Proposition}
 \newtheorem{corollary}[theorem]{Corollary}
 \newtheorem*{theorem*}{Theorem}
\newtheorem*{proposition*}{Proposition}
\newtheorem*{lemma*}{Lemma}
\theoremstyle{definition}
 \newtheorem{definition}{Definition}[section]
 \theoremstyle{remark}
 \newtheorem{example}[theorem]{Example}
 \newtheorem{remark}[theorem]{Remark}
   \newtheorem*{claim*}{Claim}
\newcommand{\op}[1]{\operatorname{#1}}
\newcommand{\norm}[1]{\ensuremath{\|{#1}\|}}
\newcommand{\scal}[2]{\ensuremath{\left\langle #1 | #2 \right\rangle}}
\newcommand{\acoupt}[2]{\ensuremath{(#1,#2)}}
\newcommand{\Tr}{\ensuremath{\op{Tr}}}
\newcommand{\Res}{\ensuremath{\op{Res}}}
\def\Xint#1{\mathchoice
{\XXint\displaystyle\textstyle{#1}}%
{\XXint\textstyle\scriptstyle{#1}}%
{\XXint\scriptstyle\scriptscriptstyle{#1}}%
{\XXint\scriptscriptstyle\scriptscriptstyle{#1}}%
\!\int}
\def\XXint#1#2#3{{\setbox0=\hbox{$#1{#2#3}{\int}$}
\vcenter{\hbox{$#2#3$}}\kern-.5\wd0}}
\def\dashint{\Xint-}
\newcommand{\bint}{\ensuremath{\dashint}}
\newcommand{\GL}{\op{GL}}
\newcommand{\C}{\ensuremath{\mathbb{C}}} 
\newcommand{\N}{\ensuremath{\mathbb{N}}} 
\newcommand{\R}{\ensuremath{\mathbb{R}}} 
\newcommand{\bS}{\ensuremath{\mathbb{S}}} 
\newcommand{\T}{\ensuremath{\mathbb{T}}} 
\newcommand{\Z}{\ensuremath{\mathbb{Z}}}
\newcommand{\Rn}{\ensuremath{\R^{n}}}
\newcommand{\fS}{\ensuremath{\mathfrak{S}}}
\newcommand{\Ca}[1]{\ensuremath{\mathscr{#1}}}
\newcommand{\cA}{\Ca{A}}
\newcommand{\cH}{\ensuremath{\mathscr{H}}}
\newcommand{\cL}{\ensuremath{\mathscr{L}}}
\newcommand{\cR}{\Ca{R}}
\newcommand{\cS}{\ensuremath{\mathscr{S}}}
\newcommand{\scA}{\mathscr{A}}
\newcommand{\scI}{\mathscr{I}}
\newcommand{\scL}{\mathscr{L}}
\newcommand{\sH}{\mathscr{H}}
\newcommand{\sK}{\mathscr{K}}
\newcommand{\sQ}{\mathscr{Q}}
\newcommand{\sX}{\mathscr{X}}
\newcommand{\psido}{$\Psi$DO} 
\newcommand{\psidos}{$\Psi$DOs}
\newcommand{\stS}{\mathbb{S}}
\newcommand{\Sp}{\op{Sp}}
\newcommand{\Vol}{\op{Vol}}
\newcommand{\Com}{\op{Com}}
\newcommand{\bcn}{\hat{c}_{n}}
\newcommand{\cDelta}{\widehat{\Delta}} 
\numberwithin{equation}{section}
\begin{document}

\title{Connes' trace theorem for Curved Noncommutative Tori. Application to Scalar Curvature}

\author{Rapha\"el Ponge}
 \address{School of Mathematics, Sichuan University, Chengdu, China}
 \email{ponge.math@icloud.com}

 \thanks{The research for this article was partially supported by 
  NSFC Grant 11971328 (China).}
 
\begin{abstract}
In this paper we prove a version of Connes' trace theorem for noncommutative tori of any dimension~$n\geq 2$. This allows us to recover and improve earlier versions of this result in dimension $n=2$ and $n=4$ by Fathizadeh-Khalkhali~\cite{FK:LMP13, FK:JNCG15}. We also recover the 
ConnesÕ integration formula for flat noncommutative tori of McDonald-Sukochev-Zanin~\cite{MSZ:MA19}. As a further application we prove a curved version of this integration formula in terms of the Laplace-Beltrami operator defined by an arbitrary Riemannian metric. For the class of so-called self-compatible Riemannian metrics (including the conformally flat metrics of Connes-Tretkoff) this shows that Connes' noncommutative integral allows us to recover the Riemannian density. 
This exhibits a neat link between this notion of noncommutative integral and noncommutative measure theory in the sense of operator algebras. As an application of these results, we setup a natural notion of scalar curvature for curved noncommutative tori. 
\end{abstract}

\maketitle 

\section{Introduction}
The main goal of noncommutative geometry is to translate the classical tools of differential geometry in the Hilbert space formalism of quantum mechanics~\cite{Co:NCG}. In this framework the role of the integral is played by positive (normalized) traces on the weak trace class $\scL^{1,\infty}$. Important examples of such traces are provided by Dixmier traces~\cite{Di:CRAS66}. An operator $T\in \scL^{1,\infty}$ is measurable (resp., strongly measurable) when the value $\varphi(T)$ is independent of the trace $\varphi$ when it ranges throughout Dixmier traces (resp., positive normalized traces). The NC integral $\bint T$ is then defined as the single value $\varphi(T)$. 

Given any closed Riemannian manifold $(M^n,g)$, Connes' trace theorem~\cite{Co:CMP88, KLPS:AIM13} asserts that every pseudodifferential operator $P$ of order~$-n$ on $M$ is strongly measurable, and we have
\begin{equation}
 \bint P= \frac{1}{n}\Res (P),
 \label{eq:Intro.Trace-thm}
\end{equation}
where $\Res$ is the noncommutative residue trace of Guillemin~\cite{Gu:AIM85} and Wodzicki~\cite{Wo:HDR,Wo:NCR}. Let $\Delta_g$ be the Laplace-Beltrami operator on functions. For operators of the form $f\Delta_g^{-\frac{n}{2}}$, $f\in C^\infty(M)$, we further have ConnesÕ integration formula~\cite{Co:CMP88}, 
\begin{equation}
 \bint f \Delta_g^{-\frac{n}{2}}= c_n \int_M f \nu(g), \qquad c_n:=\frac{1}{n}(2\pi)^{-n}|\bS^{n-1}|,
 \label{eq:Intro.Integration-Formula}
\end{equation}
where $\nu(g)$ is the Riemannian density. In particular, this shows that the noncommutative integral recaptures the Riemannian density. 

This paper deals with analogues for noncommutative tori of the trace theorem~(\ref{eq:Intro.Trace-thm}) and the integration formula~(\ref{eq:Intro.Integration-Formula}). Noncommutative 2-tori naturally arise from actions of $\Z$ on the circle $\bS^1$ by irrational rotations. More generally, an $n$-dimensional noncommutative torus $A_\theta=C(\T_n^\theta)$ is a $C^*$-algebra generated by unitaries $U_1, \ldots, U_n$ subject to  relations, 
\begin{equation*}
 U_lU_j = e^{2i\pi \theta_{jl}}U_jU_l, \qquad j,l=1,\ldots, n, 
\end{equation*}
where $\theta=(\theta_{jl})$ is some given real anti-symmetric matrix. There is a natural action of $\R^n$ onto $A_\theta$. The subalgabra of smooth vectors $\scA_\theta=C^\infty(\T^n_\theta)$ is the smooth noncommutator tori. The Hilbert space $\cH_\theta=L^2(\T^n_\theta)$ is obtained as the space of the GNS representation for the standard normalized trace $\tau$ of $A_\theta$. 

The pseudodifferential calculus on NC tori of Connes~\cite{Co:CRAS80}  has been receiving a great deal of attention recently due to its role in the early development of modular geometry on NC tori by Connes-Tretkoff~\cite{CT:Baltimore11} and Connes-Moscovici~\cite{CM:JAMS14} (see also~\cite{Co:Survey19, FK:Survey19} for surveys). There is a noncommutative residue trace for integer order \psidos\ on $\T_\theta^n$ (see~\cite{FW:JPDOA11, FK:JNCG15, LNP:TAMS16, Po:TracesNCT}). This is even the unique trace up to constant multliple~\cite{Po:TracesNCT}. 

The trace theorem of this paper (Theorem~\ref{thm:Trace-Thm}) states that any pseudodifferential operator $P$ of order $-n$ on $\T_\theta^n$ is strongly measurable, and we have 
\begin{equation}
  \bint P= \frac{1}{n}\Res (P),
  \label{eq:Intro.trace-formula-NCT} 
\end{equation}
where on the r.h.s.~we have the noncommutative residue trace for NC tori. This refines earlier results of Fathizadeh-Khalkhali~\cite{FK:LMP13, FK:JNCG15} in dimensions $n=2$ and $n=4$, where measurability and a trace formula are established, but strong measurability is not. The trace formula~(\ref{eq:Intro.trace-formula-NCT}) can be extended to all operators $aP$ with $a\in A_\theta$ and $P$ as above (see Corollary~\ref{cor:Trace-thm.super-integration-formula}). Applying this to $P=\Delta^{-\frac{n}{2}}$, where $\Delta$ is the flat Laplacian of $\T_\theta^n$, immediately gives back the Connes integration formula for flat  NC tori of McDonald-Sukochev-Zanin~\cite{MSZ:MA19} (see Corollary~\ref{cor:Trace-thm.flat-integration-formula}). 

We seek for a \emph{curved} version of the flat integration formula of~\cite{MSZ:MA19}, i.e., an extension involving the Laplace-Beltrami operator $\Delta_g$ associated with an arbitrary Riemannian metric $g$ on $\T_\theta^n$ which was recently constructed in~\cite{HP:Laplacian}.  We refer to Section~\ref{sec:Riemannian} for the precise definition of Riemannian metrics on $\T_\theta^n$ in the sense of~\cite{HP:Laplacian, Ro:SIGMA13}. They are given by symmetric positive invertible matrices $g=(g_{ij})$ with entries in $\cA_\theta$. Any Riemannian metric $g$ defines Riemannian density $\nu(g)$, which is a positive invertible element of $\cA_\theta$. The original setting of~\cite{CM:JAMS14, CT:Baltimore11} corresponds to the special case of conformally flat metrics $g_{ij}=k^2\delta_{ij}$, $k\in \cA_\theta$, $k>0$. 

The Riemannian density $\nu(g)$ defines a weight $\varphi_g(a):=(2\pi)^n\tau[a\nu(g)]$, $a\in A_\theta$. The very datum of this weight produces a non-trivial modular geometry, even when $g$ is conformally flat. In particular, the GNS construction for $\varphi_g$ produces  two non-isometric $*$-representations of $A_\theta$ and its opposite algebra $A_\theta^\circ$. The corresponding Tomita involution and modular operator $J_g(a):=\nu(g)^{-\frac12} a^*\nu(g)^{\frac12}$ and 
 $\mathbf{\Delta}(a):=\nu(g)^{-1}a \nu(g)$, $a \in A_\theta$. Thus, if we let $\cH_g^\circ$ be the Hilbert space of the GNS representation of $A_\theta^\circ$, then have a $*$-representation $a \rightarrow a^\circ$ of $A_\theta$ in $\cH_\theta^\circ$, where $a^\circ =J_ga^*J_g=\nu(g)^{-\frac12}a \nu(g)^{\frac12}$ acts by left multiplication. 

The Laplace-Beltrami operator $\Delta_g:\cA_\theta \rightarrow \cA_\theta$ of~\cite{HP:Laplacian} is an elliptic 2nd order differential operator with principal symbol  $\nu(g)^{-\frac12}|\xi|_g^2 \nu(g)^{\frac12}$, where $|\xi|_g:=(\sum g^{ij}\xi_i\xi_j)^{\frac12}$ and $g^{-1}=(g^{ij})$ is the inverse matrix of $g$. This is also an essentially selfadjoint operator on $\cH^\circ_g$ with non-negative spectrum. Our curved integration formula  (Theorem~\ref{thm:Curved.integration-formula}) then states that, for every $a\in A_\theta$, the operator $a^\circ \Delta_g^{-\frac{n}{2}}$ is strongly measurable, and we have
\begin{equation}
 \bint a^\circ \Delta_g^{-\frac{n}{2}}= \bcn \tau\big[a \tilde{\nu}(g)\big], \qquad \bcn:=\frac{1}{n}|\bS^{n-1}|,
 \label{eq:Intro.curved-integration-NCTori}  
\end{equation}
where $\tilde{\nu}(g):=|\bS^{n-1}|^{-1} \int_{\bS^{n-1}}|\xi|_g^{-n} d^{n-1}\xi $ is the so-called \emph{spectral Riemannian density} (see Section~\ref{sec:Curved-Integration}). 

The spectral Riemannian density $\tilde{\nu}(g)$ enjoys many of the properties of the Riemannian density $\nu(g)$ (see Proposition~\ref{prop:Curved.properties-tnug}). In general, $\nu(g)$ and $\tilde{\nu}(g)$ need not agree, but they do agree when $g$ is \emph{self-compatible}, i.e., the entries $g_{ij}$ pairwise commute with each other (\emph{loc.\ cit.}). For instance, conformally flat metrics and the functional metrics of~\cite{GK:arXiv18} are self-compatible. 

In the self-compatible case, the integration formula~(\ref{eq:Intro.curved-integration-NCTori}) shows that the NC integral recaptures the  Riemannian weight $\varphi_g$ (see Corollary~\ref{cor:Curved.integration-formula-selfcompatible}). This provides us with a neat link between the NC integral in the sense of noncommutative geometry, and the noncommutative measure theory in the sense of operator algebras, where the role of Radon measures is played by weights on $C^*$-algebras. 

In the same way as with ordinary (compact) manifolds the trace formula~(\ref{eq:Intro.trace-formula-NCT}) allows us to extend the NC integral to \emph{all} \psidos\ on $\T^n_\theta$ even those that are not weak trace class or even bounded (see Definition~\ref{def:curvature.bint}). Together with the curved integration formula this allows us to setup a \emph{quantum length element} $ds:= (c_n^{-1}\Delta_g^{-\frac{n}2})^{\frac1{n}}=c_n^{-\frac1{n}}\Delta_g^{-\frac12}$ and \emph{spectral $k$-dimensional volumes} $\widetilde{\Vol}^{(k)}_g(\T^n_\theta):=\bint ds^k$ for $k=1,\ldots,n$ (see Section~\ref{sec:curvature}). 

For any closed Riemannian manifold $(M^n,g)$, $n\geq 3$, we have 
\begin{equation}
 \bint f ds^{n-2} =-c_n'\int_M \kappa(g) \nu(g) \qquad \forall f \in \C^\infty(M),
 \label{eq:Intro.bint-curvature} 
\end{equation}
where $\kappa(g)$ is the scalar curvature and $c_n'$ is some (positive) universal constant. Given any curved NC tori $(\T^n_\theta, g)$, $n\geq 3$,
 the scalar curvature is the unique $\nu(g)\in \cA_\theta$ such that 
\begin{equation*}
 -\frac{1}{c_n'}\bint a ds^{n-2}= \tau\big[a \kappa(g)\big] \qquad \forall a\in \cA_\theta.  
\end{equation*}
This is consistent with previous approaches to the scalar curvature for NC tori. When $\theta =0$ we recover the usual notion of scalar curvature. 

The scalar curvature $\kappa(g)$ is naturally expressed in terms of some integral of the symbol of degree~$-n$ of $\cDelta_g^{-\frac{n}2+1}$, where $\cDelta_g=\nu(g)^{\frac12}\Delta \nu(g)^{-\frac12}$ is the Laplace-Beltrami operator $\Delta_g$ under the unitary isomorphism $(2\pi)^{\frac{n}2}\nu(g)^{\frac12}:\cH_g^\circ \rightarrow \cH_\theta$ (see Proposition~\ref{prop:curved.kappag}). To relate our description to the scalar curvature to previous descriptions we use the results of~\cite{LP:Resolvent} on complex powers of positive elliptic \psidos\ on NC tori. Namely, we re-express $\kappa(g)$ in terms of the 2nd sub-leading symbol $\sigma_{-4}(\xi;\lambda)$ of the resolvent $(\cDelta_g-\lambda)^{-1}$ (Proposition~\ref{prop:curved.curvature-sigma4}). This provides us with a solid notion of scalar curvature for curved NC tori of dimension~$\geq 3$. In addition, we recover the modular scalar curvature of NC 2-tori of~\cite{CM:JAMS14, CT:Baltimore11} by analytic continuation of the dimension (see Remark~\ref{sec:curvature.2D}).

%sectioning 
This paper is organized as follows. In Section~\ref{sec:Quantized}, we review the main facts on Connes' quantized calculus and the noncommutative integral. 
In Section~\ref{sec:NCtori}, we review the main definitions and properties of noncommutative tori. In Section~\ref{sec:PsiDOs}, we further review the main definitions and properties of pseudodifferential operators on noncommutative tori. 
In Section~\ref{sec:NCR}, we survey the construction of the noncommutative residue trace for \psidos\ on NC tori. 
ln Section~\ref{sec:deltaj-derivatives}, we establish that any \psido\ agrees with a scalar-symbol \psido\ modulo the closure of the weak trace class commutator subspace. 
In Section~\ref{sec:Weyl}, we establish a WeylÕs law for positive \psidos\ with scalar symbols. 
In Section~\ref{sec:Connes-Trace}, we prove the version for noncommutative tori of Connes' trace theorem and give a couple of applications. 
In Section~\ref{sec:Riemannian}, we review the main definitions and properties of Riemannian metrics and Laplace-Beltrami operators.
In Section~\ref{sec:Curved-Integration}, we prove the curved integration formula and present a few consequences. 
In Section~\ref{sec:curvature},  we set up  a natural notion of scalar curvature for curved NC tori. 
In Appendix~\ref{sec:quantized-EH} we sketch a proof of~(\ref{eq:Intro.bint-curvature}). In Appendix~\ref{sec:L2-formulas} we compare 
our integration formulas with the curved integration formula of~\cite{MPSZ:Preprint}.

\subsection*{Acknowlegements} My warmest thanks go Edward McDonald, Fedor Sukochev, and Dmitriy Zanin for stimulating and passionate discussions on Connes' integration formula. I thank Galina Levitina and Alex Usachev for providing a reference for Proposition~\ref{prop:Quantized.continuity-positive-trace}. I also thank University of New South Wales (Sydney, Australia) and University of Qu\'ebec (Montr\'eal, Canada) for their hospitality during the preparation of the paper. 

%\clearpage 
\section{Quantized Calculus and Noncommutative Integral}\label{sec:Quantized}  
In this section, we review the main facts on Connes' quantized calculus and the construction of the noncommutative integral. 

\subsection{Quantized calculus} 
The main goal of the quantized calculus of Connes~\cite{Co:NCG} is to translate into a the Hilbert space formalism of quantum mechanics the main tools of the infinitesimal and integral calculi. In what follows, we let $\sH$ be a (separable) Hilbert space. We mention the first few lines of Connes' dictionary. 

\renewcommand{\arraystretch}{1.2}

\begin{center}
    \begin{tabular}{c|c}  
        Classical & Quantum \\ \hline       
       Complex variable & Operator on $\cH $  \\
      Real variable &  Selfadjoint operator on $\cH $  \\  
 Infinitesimal variable & Compact operator on $\cH $ \\
    %    & & \\ 
       Infinitesimal of order $\alpha>0$  & Compact operator $T$ such that\\ 
                      &  $\mu_{k}(T)=\op{O}(k^{-\alpha})$\\
%                 Integral & Dixmier Trace $\bint$ 
    \end{tabular}
\end{center}

The first two lines arise from quantum mechanics. Intuitively speaking, an infinitesimal is meant to be smaller than any real number. For a bounded operator the condition $\|T\|<\epsilon$ for all $\epsilon>0$ gives $T=0$. This condition can be relaxed into 
\begin{center}
 $\forall \epsilon >0$ $\exists E\subset \sH$ such that $\dim E<\infty$ and $\|T_{|E^\perp}\|<\epsilon$.  
\end{center}
This condition is equivalent to $T$ being a compact.  

Denote by $\sK$ the closed two-sided ideal of compact operator. The order of compactness of an operator $T\in \sK$ is given by its characteristic values (a.k.a.~singular values), 
\begin{align*}
 \mu_k(T) & = \inf\left\{\|T_{|E^\perp}\|; \ \dim E=k\right\}\\
 & = \text{$(k+1)$-th eigenvalue of $|T|=\sqrt{T^*T}$}.
\end{align*}
The last line is the min-max principle. Each eigenvalue is counted according to multiplicity. We record the main properties of the characteristic values (see, e.g., \cite{GK:AMS69, Si:AMS05}), 
\begin{gather}
 \mu_k(T)=\mu_k(T^*)=\mu_k(|T|),\qquad 
 \mu_{k+l}(S+T)\leq \mu_k(S) + \mu_l(T),\
 \label{eq:Quantized.properties-mun1} \\
 \mu_k(ATB)\leq \|A\| \mu_k(T) \|B\|,\qquad 
 \mu_k(U^*TU)= \mu_k(T) ,
  \label{eq:Quantized.properties-mun2}
\end{gather}
where $S,T\in \sK$, $A,B\in \scL(\sH)$ and $U\in\scL(\sH)$ is any unitary operator. 

An \emph{infinitesimal operator} of order $\alpha>0$ is any compact operator such that $\mu_k(T)=\op{O}(k^{-\alpha})$. For $p>0$ the weak Schatten class $\scL^{p,\infty}$ is defined by
\begin{equation*}
 \scL^{p,\infty}:=\left\{T\in \sK; \ \mu_k(T)=\op{O}(k^{-\frac1p})\right\}. 
\end{equation*}
Thus, $T$ is an infinitesimal operator of order $\alpha$ if and only if $T\in \scL^{p,\infty}$ with $p=\alpha^{-1}$. 

We refer to~\cite{LSZ:Traces}, and the references therein, for background on the weak Schatten classes $\scL^{p,\infty}$. In particular, they are quasi-Banach ideals (see, e.g., \cite{Si:TAMS76}) with quasi-norms,
\begin{equation*}
 \|T\|_{p,\infty} = \sup_{k\geq 0} (k+1)^{\frac1p} \mu_k(T), \qquad T\in \scL^{p,\infty}.  
 \end{equation*}
For $p>1$ this quasi-norm is equivalent to a norm, and so $\cL^{p,\infty}$ is actually a Banach ideal. In general (see, e.g., \cite{Si:TAMS76}), we have 
\begin{equation}
 \|S+T\|_{p,\infty}\leq 2^{\frac1p}\left(\|S\|_{p,\infty} + \|T\|_{p,\infty}\right), \qquad S,T\in \scL^{p,\infty}.
 \label{eq:Quantized.quasi-norm} 
\end{equation}

For $p=1$ the weak trace class $\cL^{1,\infty}$ is strictly contained in the Dixmier-Macaev ideal,
 \begin{equation*}
  \scL^{(1,\infty)}:=\bigg\{T\in \sK; \ \sum_{k<N}\mu_k(T)=\op{O}(\log N)\bigg\}. 
\end{equation*}
The Dixmier-Macaev ideal is a Banach ideal. It is also called the weak trace class by various authors. We follow the convention of~\cite{LSZ:Traces} where, under Calkin's correspondance, $\cL^{p,\infty}$ is the image of the weak-$\ell^p$ sequence space $\ell^{p,\infty}=\big\{(a_k)\in \C^{\N_0}; |a_k|=\op{O}(k^{-\frac1p})\big\}$ .   

We also record the inclusions, 
\begin{equation*}
 \scL^p \subset \scL^{p,\infty} \subset \scL^1 \subset \scL^{1,\infty}\subset \scL^{(1,\infty)}, \qquad p>1.
\end{equation*}
All these inclusions are strict. 

\subsection{Noncommutative integral} We seek for an analogue of the notion of integral in the setting of the quantized calculus. As an Ansatz this should be a linear functional satisfying the following conditions: 
\begin{enumerate}
 \item[(I1)] It is defined on all infinitesimal operators of order~1. 
 
 \item[(I2)] It vanishes on infinitesimal operators of order~$> 1$. 
 
 \item[(I3)] It takes non-negative values on positive operators. 
 
 \item[(I4)] It is invariant under Hilbert space isomorphisms. 
 \end{enumerate}
The conditions (I1) and (I4) lead us to seek for a trace on $\scL^{1,\infty}$, where by a trace we mean any linear form that is annihilated by the commutator subspace, 
\begin{equation*}
 \Com\big(\scL^{1,\infty}\big):=\op{Span}\left\{[A,T];\ A\in \scL(\sH), \ T\in\scL^{1,\infty}\right\}. 
\end{equation*}
It can be shown that any trace on $\scL^{1,\infty}$ vanishes on trace-class operators (see Proposition~\ref{prop:Quantized.trace-class-commutators} below). As every infinitesimal operators of order~$>1$ is trace-class, the condition (I2) is always satisfied by traces on $\scL^{1,\infty}$ (see Proposition~\ref{prop:Quantized.trace-class-commutators} below). The condition (I3) further requires the NC integral to be positive. Therefore, we are seeking for a positive trace on the quasi-Banach space $\scL^{1,\infty}$. 

There is a whole zoo of traces on $\scL^{1,\infty}$. We refer to~\cite{LSZ:Traces, LSZ:Survey19}, and the references therein, for a detailed account on traces on $\scL^{1,\infty}$ and the commutator subspace $ \Com(\scL^{1,\infty})$. We will only need the following two results. 

\begin{proposition}[\cite{DFKW:AIM04, LSZ:Traces}]\label{prop:Quantized.trace-class-commutators}
Every trace-class operator is contained in $\Com(\scL^{1,\infty})$. In particular, every trace on $\scL^{1,\infty}$ is annihilated by $\cL^1$. 
\end{proposition}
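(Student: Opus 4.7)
The plan is to reduce the first (and therefore also the second) assertion to the spectral characterization of the commutator subspace $\Com(\cL^{1,\infty})$ proved by Dykema--Figiel--Kalton--Weiss~\cite{DFKW:AIM04}. The second statement is an immediate consequence of the first, since any trace on $\cL^{1,\infty}$ is, by definition, a linear functional vanishing on $\Com(\cL^{1,\infty})$; hence only the inclusion $\cL^{1} \subset \Com(\cL^{1,\infty})$ actually needs work.

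The main step is to invoke the following characterization from \cite{DFKW:AIM04} (systematically presented in~\cite{LSZ:Traces}): a compact operator $T \in \cL^{1,\infty}$ belongs to $\Com(\cL^{1,\infty})$ if and only if its eigenvalue sequence $(\lambda_{k}(T))_{k \ge 0}$, arranged by decreasing modulus and counted with algebraic multiplicity, satisfies
\begin{equation*}
\frac{1}{\log(2+N)} \sum_{k=0}^{N-1} \lambda_{k}(T) \xrightarrow[N \to \infty]{} 0,
\end{equation*}
and in the positive case one may replace the $\lambda_{k}$ by the characteristic values $\mu_{k}$.

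Once this is in hand, I would verify the condition directly for $T \in \cL^{1}$. After splitting $T$ into its real and imaginary parts and then each of these into positive and negative parts, it suffices to treat the case $T \ge 0$ trace class. For such $T$, the series $\sum_{k} \mu_{k}(T)$ converges, so the partial sums $\sum_{k=0}^{N-1}\mu_{k}(T)$ stay bounded as $N \to \infty$; dividing by $\log(2+N)$ then gives a sequence tending to zero, whence $T \in \Com(\cL^{1,\infty})$, as required.

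The hard part is not the two-line verification above, but the spectral characterization itself: given $T \in \cL^{1,\infty}$ with vanishing logarithmic eigenvalue average, producing an explicit decomposition $T = \sum_{i}[A_{i},B_{i}]$ with $A_{i} \in \scL(\sH)$ and $B_{i} \in \cL^{1,\infty}$ is the non-trivial content of \cite{DFKW:AIM04}, and it rests on Kalton's general theory of commutators in symmetric operator ideals. Naive attempts, such as forming $[S,D]$ with $S$ a shift and $D$ a diagonal operator built from the tail sums $\sum_{j \ge k}\mu_{j}(T)$, fail because the tails of a decreasing summable sequence need not lie in $\ell^{1,\infty}$. This is precisely why one appeals to the abstract spectral criterion here rather than trying to construct the commutators by hand.
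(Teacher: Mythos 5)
Your overall strategy---reduce the first inclusion to the Dykema--Figiel--Kalton--Weiss spectral characterization of $\Com(\scL^{1,\infty})$ and then dispose of the second assertion as an immediate corollary---is the right one, and indeed it is what the paper's bare citation of \cite{DFKW:AIM04, LSZ:Traces} intends (the paper gives no proof of its own). However, the spectral criterion you state is not the one that characterizes $\Com(\scL^{1,\infty})$. The condition
\begin{equation*}
\frac{1}{\log(2+N)}\sum_{k<N}\lambda_k(T)\longrightarrow 0
\end{equation*}
characterizes, for positive $T$ and via~(\ref{eq:Quantized.Dixmier-measurable}), membership in the common kernel of all \emph{Dixmier} traces, which is a strictly larger subspace than $\Com(\scL^{1,\infty})$; the gap between the two is exactly why the paper distinguishes measurability from strong measurability. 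A concrete counterexample to your ``if'' direction: take $T\geq 0$ diagonal with $\mu_k(T)=\frac{1}{(k+1)\log(k+2)}$. Then $T\in\scL^{1,\infty}$ and $\sum_{k<N}\mu_k(T)\sim\log\log N$, so your criterion is met, yet $T\notin\Com(\scL^{1,\infty})$. The correct DFKW criterion is that the Ces\`aro averages
\begin{equation*}
\bigg(\frac{1}{n+1}\sum_{k=0}^{n}\lambda_k(T)\bigg)_{n\geq 0}
\end{equation*}
belong to the Calkin sequence space $\ell^{1,\infty}$ of the ideal, i.e.\ have decreasing rearrangement $\op{O}(n^{-1})$. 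For the $T$ above these averages are $\sim\frac{\log\log n}{n}$, which fails $\op{O}(n^{-1})$, confirming $T\notin\Com(\scL^{1,\infty})$.

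Once the criterion is corrected, your argument not only survives but simplifies: for $T\in\scL^{1}$, Weyl's inequality gives $\sum_k|\lambda_k(T)|\leq\sum_k\mu_k(T)=\|T\|_1<\infty$, so the partial sums $\sum_{k\leq n}\lambda_k(T)$ stay bounded, the Ces\`aro averages are $\op{O}(n^{-1})$, and $T\in\Com(\scL^{1,\infty})$; there is no need to split $T$ into selfadjoint and then positive parts first. Your closing observation---that a naive shift--diagonal commutator built from the tail sums of a decreasing $\ell^1$ sequence fails because those tails need not lie in $\ell^{1,\infty}$---is correct and is a good explanation of why the abstract DFKW machinery is genuinely needed here.
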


\begin{proposition}\label{prop:Quantized.continuity-positive-trace}
 Every positive trace on $\scL^{1,\infty}$ is continuous.
\end{proposition}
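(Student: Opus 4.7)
The plan is to show that $\varphi$ is bounded on the quasi-Banach ideal $\scL^{1,\infty}$, from which continuity follows by linearity. The essential mechanism is that positivity of $\varphi$ produces monotonicity, and combined with unitary invariance (which comes for free from the trace property) it forces $\varphi$ to see only singular values, so the obvious pointwise bound $\mu_k(T)\leq \|T\|_{1,\infty}(k+1)^{-1}$ can be leveraged into a norm bound.

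First I would record that any trace on $\scL^{1,\infty}$ is unitarily invariant. Indeed, for any unitary $U\in\scL(\sH)$ and $T\in\scL^{1,\infty}$, the difference $U^*TU-T=[U^*,TU]$ lies in $\Com(\scL^{1,\infty})$, so $\varphi(U^*TU)=\varphi(T)$. Positivity then gives the usual monotonicity property: if $0\leq S\leq T$ in $\scL^{1,\infty}$, then $T-S\geq 0$ and hence $\varphi(S)\leq \varphi(T)$.

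Next I would fix once and for all a positive operator $A_0\in\scL^{1,\infty}$ with $\mu_k(A_0)=(k+1)^{-1}$ (a diagonal operator in some orthonormal basis $(e_k)$ of $\sH$). Given any positive $T\in\scL^{1,\infty}$, diagonalize $T=\sum_{k\geq 0}\mu_k(T)\,f_k\otimes f_k^*$ in an orthonormal eigenbasis $(f_k)$, and let $U$ be the unitary sending $e_k$ to $f_k$. Since $\mu_k(T)\leq \|T\|_{1,\infty}(k+1)^{-1}$ for all $k$, the operator $B:=\|T\|_{1,\infty}\,UA_0U^*=\sum_k \|T\|_{1,\infty}(k+1)^{-1}f_k\otimes f_k^*$ satisfies $0\leq T\leq B$ in $\scL(\sH)$. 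Monotonicity together with unitary invariance yield
\begin{equation*}
 0\leq \varphi(T)\leq \varphi(B)=\|T\|_{1,\infty}\,\varphi(A_0).
\end{equation*}

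Finally I would remove the positivity assumption. Any $T\in\scL^{1,\infty}$ decomposes as $T=\Re(T)+i\Im(T)=\Re(T)_{+}-\Re(T)_{-}+i\Im(T)_{+}-i\Im(T)_{-}$ with each of the four positive parts bounded in $\scL^{1,\infty}$ by a universal constant times $\|T\|_{1,\infty}$ (using $\mu_k(T^*)=\mu_k(T)$, the quasi-triangle inequality~(\ref{eq:Quantized.quasi-norm}) with $p=1$, and the fact that for a selfadjoint operator $S$ one has $\mu_k(S_{\pm})\leq \mu_k(S)$). Applying the previous bound to each of the four positive parts produces a constant $C>0$ (depending only on $\varphi(A_0)$) such that $|\varphi(T)|\leq C\,\|T\|_{1,\infty}$ for every $T\in\scL^{1,\infty}$, which is the desired continuity.

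The only delicate point I anticipate is keeping track of the constants in the reduction from general $T$ to positive $T$, since $\|\cdot\|_{1,\infty}$ is only a quasi-norm; but this is routine and already encoded in~(\ref{eq:Quantized.quasi-norm}). The conceptual core — domination by a unitary conjugate of a fixed reference operator — is what makes positivity so much stronger than mere linearity in this setting.
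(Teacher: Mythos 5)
Your proof is correct, but it takes a genuinely different route from the one the paper has in mind. The paper's remark points to the standard $C^*$-algebra fact (Murphy, Theorem~3.3.1) that positive linear forms are automatically bounded, and asserts that the ``same type argument'' works for $\scL^{1,\infty}$. That argument is a proof by contradiction: assuming unboundedness, one picks positive $a_n$ with $\|a_n\|_{1,\infty}\leq 1$ and $\varphi(a_n)$ growing fast, forms the element $a=\sum c_n a_n$ for a sufficiently fast-decaying positive sequence $(c_n)$ (fast enough to beat the quasi-triangle inequality~(\ref{eq:Quantized.quasi-norm}) and produce a Cauchy sequence of partial sums in the quasi-Banach space $\scL^{1,\infty}$), and then uses monotonicity of $\varphi$ on the partial sums $\sum_{n\leq N}c_n a_n\leq a$ to conclude $\varphi(a)=\infty$, a contradiction. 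Notice that this argument uses only positivity and never the trace property; it therefore proves the stronger statement that \emph{every} positive linear form on $\scL^{1,\infty}$ is continuous. Your argument, by contrast, is a direct domination argument that crucially uses the trace property: you need $\varphi(UA_0U^*)=\varphi(A_0)$, which holds precisely because $U A_0 U^*-A_0\in\Com(\scL^{1,\infty})$. What you gain in exchange is a clean quantitative bound, $|\varphi(T)|\leq C\,\varphi(A_0)\,\|T\|_{1,\infty}$, and you avoid the completeness/quasi-norm bookkeeping entirely. Both proofs are valid; yours is more specific to traces, the paper's is more general. One small point worth being explicit about in your write-up: the eigenbasis $(f_k)$ of a positive compact $T$ must be completed to a full orthonormal basis of $\sH$ (adding a basis of $\ker T$) before the unitary $U:e_k\mapsto f_k$ is well-defined.
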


\begin{remark}
Any positive linear form on a $C^*$-algebra is continuous (see~\cite[Theorem~3.3.1]{Mu:AP90}). The same type argument shows that on any positive linear form on $\scL^{1,\infty}$ is continuous. 
\end{remark}

\begin{remark}
 Proposition~\ref{prop:Quantized.continuity-positive-trace} has a converse. Namely, every continuous trace on $\scL^{1,\infty}$  is a linear combination of 4 positive traces (see~\cite[Corollary~2.2]{CMSZ:ETDS19}). Therefore, the positive traces on $\scL^{1,\infty}$ span the space of continuous traces. 
\end{remark}

A well-known example of positive trace is the Dixmier trace~\cite{Di:CRAS66}. We briefly present its construction by following~\cite[Appendix~A]{CM:GAFA95}. In what follows we denote by $\cL^{1,\infty}_+$ the  cone of positive operators in $\cL^{1,\infty}$. Given any $T\in \cL^{1,\infty}_+$, for $u>0$ and $\lambda\geq e$, we set
\begin{equation*}
 \sigma_u(T):=\int_0^s \mu_{[s]}(t)ds, \qquad \tau_\lambda(T)= \frac{1}{\log \lambda} \int_e^\lambda \frac{\sigma_u(T)}{\log u} \frac{du}{u}.
\end{equation*}
Note that $\sigma_N(T)=\sum_{k<N}\mu_k(T)$ for every integer $N\geq 1$. The fact that $T\in \cL^{1,\infty}_+$ ensures us that the function $\lambda \rightarrow \tau_\lambda(T)$ is in the $C^*$-algebra $C_b[e,\infty)$ of bounded continuous functions on $[e,\infty)$. Denote by $C_0[e,\infty)$ the (closed) ideal of continuous functions vanishing at $\infty$. It can be shown that, for all $S,T\in \cL^{1,\infty}_+$, we have
\begin{equation}
 \tau_\lambda (S+T)= \tau_\lambda (T) + \tau_\lambda (S) \quad \bmod C_0[e,\infty). 
 \label{eq:Quantized.asymptotic-additivity} 
\end{equation}

Let $\omega$ be a state on the quotient $C^*$-algebra $A:=C_b[e,\infty) \slash C_0[e,\infty)$ (i.e., $\omega$ is a positive linear functional such that $\omega(1)=1$). Then~(\ref{eq:Quantized.asymptotic-additivity}) ensures us that we define an additive functional $\varphi_\omega : \cL^{1,\infty}_+\rightarrow [0,\infty)$ by letting
\begin{equation*}
 \varphi_w(T)= \omega\left(\big[\tau(T)\big]\right), \qquad T \in \cL^{1,\infty}_+,
\end{equation*}
where $[\tau(T)]$ is the class of $\lambda \rightarrow \tau_\lambda(T)$ in $A$. This uniquely extends to a positive linear trace $\varphi_\omega:\cL^{1,\infty}\rightarrow \C$. This trace is called the \emph{Dixmier trace} associated with $\omega$. We note that, for every $T\in \cL^{1,\infty}_+$, we have 
\begin{equation}
 \lim_{N\rightarrow \infty} \frac{1}{\log N} \sum_{k<N} \mu_k(T)= L \Longrightarrow \varphi_\omega(T)=L.
 \label{eq:Quantized.Dixmier-measurable}
\end{equation}
In particular, in this case the value of $\varphi_\omega(T)$ is independent of the choice of $\omega$. 

\begin{definition}[Connes~\cite{Co:NCG}] 
An operator $T\in \scL^{1,\infty}$ is called \emph{measurable} when the value of $\varphi_\omega(T)$ is independent of the choice of the state $\omega$. We then define the \emph{noncommutative integral} $\bint T$ by
\begin{equation*}
 \bint T := \varphi_\omega(T), \qquad \text{$\omega$ any state on $\sQ[e,\infty)$}. 
\end{equation*}
\end{definition}

Let $T_0$ be any operator on $\scL^{1,\infty}_+$ such that $\mu_k(T_0)=(k+1)^{-1}$ for all $k\geq 0$, i.e., there is an orthonormal basis $(\xi_k)_{k\geq 0}$ of $\sH$ such that $T_0 \xi_k=(k+1)^{-1}\xi_k$ for all $k\geq 0$. It follows from~(\ref{eq:Quantized.Dixmier-measurable}) that $\varphi_\omega(T_0)=1$ for every state $\omega$. More generally, we say that a trace $\varphi$ on $\scL^{1,\infty}$ is \emph{normalized} when $\varphi(T_0)=1$. This condition does not depend on the choice of $T_0$. 

There are positive normalized traces on $\scL^{1,\infty}$ that are not Dixmier traces. In fact, there are even positive normalized traces on  $\scL^{1,\infty}$ that do not extend to the Dixmier-Macaev ideal $\scL^{(1,\infty)}$ (see~\cite[Theorem~4.7]{SSUZ:AIM15}). Therefore, it stands for reason to consider a stronger notion of measurability. 

\begin{definition}
An operator $T\in \scL^{1,\infty}$ is called \emph{strongly measurable} when there is $L\in \C$ such that $\varphi(T)=L$ for every normalized positive trace $\varphi$ on $\scL^{1,\infty}$. We then define the \emph{noncommutative integral} $\bint T$ by
\begin{equation*}
 \bint T := \varphi(T),  \qquad \text{$\varphi$ any normalized positive trace on $\scL^{1,\infty}$}. 
\end{equation*}
 \end{definition}

\begin{remark}
 The class of strongly measurable operators is strictly contained in the class of measurable operators. We refer to~\cite[Theorem~7.4]{SSUZ:AIM15} for an example of measurable operator that is not strongly measurable.
\end{remark}

\subsection{Connes' trace theorem} Let $(M^n,g)$ be a closed Riemannian manifold. We refer to~\cite[Chapter~16]{Le:Springer12} for background on smooth densities on manifolds and their integrals. The Riemannian density $\nu(g)$ is given in local coordinates by integration against $\sqrt{g(x)}$. Let $L^2_g(M)$ be Hilbert space of $L^2$-functions on $M$ equipped with the inner product defined by $\nu(g)$, i.e., 
\begin{equation*}
 \scal{u}{v}_g=\int_M u\overline{v} \nu(g), \qquad u,v\in L^2_g(M). 
\end{equation*}

Let $\Psi^m(M)$, $m\in \Z$, be the space of (classical) $m$-th order pseudodifferential operators $P:C^\infty(M)\rightarrow C^\infty(M)$. Any $P\in \Psi^m(M)$ with $m\leq 0$ uniquely extends to a bounded operator $P:L^2_g(M)\rightarrow L^2_g(M)$. When $m<0$, any $P\in \Psi^{m}(M)$ is in the weak Schatten class $\scL^{p,\infty}$ with $p:=n|m|^{-1}$, i.e., $P$ is an infinitesimal operator of order~$\geq |m|n^{-1}$. In particular, we get weak trace-class operators when $m=-n$. 

Set $\Psi^\Z(M)=\bigcup_{m\in \Z}\Psi^m(M)$; this is a subalgebra of $\scL(C^\infty(M))$. The noncommutative residue trace $\Res:\Psi^\Z(M)\rightarrow \C$  of Guillemin~\cite{Gu:AIM85}  and Wodzicki~\cite{Wo:HDR} is defined by 
\begin{equation}
 \Res (P)= \int_M c_P(x), \qquad P\in \Psi^\Z(M), 
 \label{eq:Quantized.NCR1} 
\end{equation}
where $c_P(x)$ is the smooth density on $M$ given in local coordinates by 
\begin{equation*}
 c_P(x)=(2\pi)^{-n} \int_{\bS^{n-1}} p_{-n}(x,\xi) d^{n-1}\xi, 
\end{equation*}
where $p_{-n}(x,\xi)$ is the homogeneous symbol of degree~$-n$ of $P$.

\begin{theorem}[Connes' trace theorem~\cite{Co:CMP88, KLPS:AIM13}] Every operator $P\in \Psi^{-n}(M)$ is strongly measurable, and we have 
\begin{equation}
 \bint P = \frac{1}{n} \Res (P).
 \label{eq:Quantized.trace-formula} 
\end{equation}
\end{theorem}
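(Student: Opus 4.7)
The plan is to reduce the statement to two independent facts: (i) every $P\in \Psi^{-n}(M)$ with $\Res(P)=0$ belongs to the commutator subspace $\Com(\scL^{1,\infty})$, and (ii) the universal constant $\frac{1}{n}$ is identified by an explicit calculation on one reference operator. Fix $Q_{0}:=(1+\Delta_{g})^{-n/2}\in \Psi^{-n}(M)$; this is positive with $\Res(Q_{0})>0$. For any $P\in \Psi^{-n}(M)$ I decompose
\begin{equation*}
 P = \frac{\Res(P)}{\Res(Q_{0})}\, Q_{0} + R, \qquad R:= P - \frac{\Res(P)}{\Res(Q_{0})}Q_{0}, \qquad \Res(R)=0.
\end{equation*}
Once (i) and (ii) are in place, for every normalized positive trace $\varphi$ on $\scL^{1,\infty}$ one has $\varphi(R)=0$ (positive traces are continuous by Proposition~\ref{prop:Quantized.continuity-positive-trace} and hence annihilate $\Com(\scL^{1,\infty})$), and $\varphi(Q_{0}) = \frac{1}{n}\Res(Q_{0})$, so that $\varphi(P) = \frac{1}{n}\Res(P)$ independently of $\varphi$, proving strong measurability together with the trace formula.

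For (i), I would first verify, following Guillemin-Wodzicki, that $\Res$ is a trace on the algebra $\Psi^{\Z}(M)$: in local charts the degree $-n$ homogeneous component of any commutator $[A,B]$ is a sum of $\partial_{x_{j}}$- and $\partial_{\xi_{j}}$-derivatives of symbols, whose integrals over the cosphere bundle vanish by Stokes. This yields the vanishing of $\Res$ on commutators within $\Psi^{\Z}(M)$. To pass to the operator-level statement that $R\in \Com(\scL^{1,\infty})$ whenever $\Res(R)=0$, I would then invoke the refinement of Kalton-Lord-Potapov-Sukochev~\cite{KLPS:AIM13}, which realises such an $R$ as an actual finite sum of commutators $[A_{k},T_{k}]$ with $A_{k}\in \scL(\sH)$ and $T_{k}\in \scL^{1,\infty}$.

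For (ii), I would exploit the sharp H\"ormander Weyl law for the Laplace-Beltrami operator,
\begin{equation*}
 N_{\Delta_{g}}(\lambda) = (2\pi)^{-n}n^{-1}|\bS^{n-1}|\Vol(M,g)\,\lambda^{n/2} + \op{O}\!\left(\lambda^{(n-1)/2}\right),
\end{equation*}
whose inversion gives $\mu_{k}(Q_{0}) = c\,(k+1)^{-1} + \op{O}(k^{-1-1/n})$ with $c=(2\pi)^{-n}n^{-1}|\bS^{n-1}|\Vol(M,g)$, the essential point being that the remainder is summable in $k$. Diagonalising $Q_{0}$ in an orthonormal eigenbasis $(\xi_{k})$, and choosing a reference operator $T_{0}$ of the normalisation condition with the same eigenbasis and eigenvalues $(k+1)^{-1}$, the difference $Q_{0}-cT_{0}$ is trace-class and thus lies in $\Com(\scL^{1,\infty})$ by Proposition~\ref{prop:Quantized.trace-class-commutators}. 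Therefore $\varphi(Q_{0})=c\,\varphi(T_{0})=c$ for any normalized positive trace $\varphi$, and a direct symbolic calculation on the principal symbol $(1+|\xi|_{g}^{2})^{-n/2}\sim |\xi|_{g}^{-n}$ identifies $c$ with $\frac{1}{n}\Res(Q_{0})$.

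The main obstacle is the KLPS step in (i): the symbolic vanishing of $\Res$ on pseudodifferential commutators only places $R$ in the $\scL^{1,\infty}$-closure of $\Com(\scL^{1,\infty})$, which is enough for measurability against Dixmier traces via the Tauberian criterion~(\ref{eq:Quantized.Dixmier-measurable}) but not for strong measurability, since positive traces on $\scL^{1,\infty}$ need not extend continuously to the Dixmier-Macaev ideal $\scL^{(1,\infty)}$. Exhibiting $R$ as a genuine finite sum of commutators in $\scL^{1,\infty}$ requires Bradley-type decomposition and finite-propagation arguments adapted to the H\"ormander calculus as in~\cite{KLPS:AIM13}; this is the technical heart of the proof.
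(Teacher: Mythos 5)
Your scaffolding --- write $P = \tfrac{\Res(P)}{\Res(Q_{0})} Q_{0} + R$ with $\Res(R)=0$, kill $R$ via the commutator subspace, and fix the universal constant on the reference operator $Q_{0}$ by a sharp Weyl law --- is structurally sound and parallels both Connes' original argument and the route the paper takes for the noncommutative-torus version (Theorem~\ref{thm:Trace-Thm} together with Proposition~\ref{prop:Weyln}). Step~(ii) is correct as written: the H\"ormander remainder $\op{O}(\lambda^{(n-1)/2})$ gives $\mu_{k}(Q_{0}) - c(k+1)^{-1}\in\ell^{1}$, so $Q_{0}-cT_{0}$ is trace class and lies in $\Com(\scL^{1,\infty})$ by Proposition~\ref{prop:Quantized.trace-class-commutators}, and the constant $c$ matches $\tfrac1n\Res(Q_{0})$.

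Your closing paragraph, however, contains a genuine misconception. You argue that placing $R$ merely in the closure $\overline{\Com(\scL^{1,\infty})}$ is ``not enough for strong measurability, since positive traces on $\scL^{1,\infty}$ need not extend continuously to the Dixmier--Macaev ideal $\scL^{(1,\infty)}$.'' Extension to $\scL^{(1,\infty)}$ is a red herring: by Proposition~\ref{prop:Quantized.continuity-positive-trace}, every positive trace on $\scL^{1,\infty}$ is already continuous for the $\scL^{1,\infty}$ quasi-norm, hence vanishes on $\overline{\Com(\scL^{1,\infty})}$. This is precisely how the paper's NCT proof concludes --- the congruence~(\ref{eq:Trace-Thm.P-Com}) is only modulo $\overline{\Com(\cL^{1,\infty})}$, and that suffices. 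So you should aim for the weaker target $R\in\overline{\Com(\scL^{1,\infty})}$; insisting on genuine $\Com(\scL^{1,\infty})$-membership, as KLPS do, buys the trace formula for \emph{all} traces, including discontinuous ones, which is more than strong measurability demands.

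Even with this relaxation, step~(i) as written is not a proof: you move from ``$\Res$ vanishes on commutators inside $\Psi^{\Z}(M)$'' to ``$\Res(R)=0 \Rightarrow R\in\overline{\Com(\scL^{1,\infty})}$'' by appeal alone. These are different statements --- the commutators spanning $\ker\Res$ in $\Psi^{\Z}(M)$ involve unbounded operators --- and converting them into $\scL^{1,\infty}$-commutators (or a limit of such) is where all the analysis lives. The paper carries out this reduction explicitly for NC tori: Proposition~\ref{prop:derivatives.sums} decomposes a symbol into $\tau[\rho]$ plus $\delta_{j}$-derivatives, Proposition~\ref{prop:derivatives.ComL1infty} shows those derivative terms land in $\overline{\Com(\cL^{1,\infty})}$, and Proposition~\ref{prop:Weyln} handles the scalar-symbol remainder by a Weyl law. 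A manifold-side analogue (localization in charts, comparison with model operators, the ``Bradley/finite-propagation'' machinery you allude to) is exactly what~\cite{KLPS:AIM13} supplies; as it stands your outline names the destination without supplying the journey.
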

 
\begin{remark}
 Connes~\cite{Co:CMP88} only considered Dixmier traces, so he only established the measurablity of operators in $P\in \Psi^{-n}(M)$ along with the trace formula~(\ref{eq:Quantized.trace-formula}). The strong measurability is a consequence of the results of~\cite{KLPS:AIM13}, where the trace formula is established 
for \emph{every} trace $\varphi$ on $\scL^{1,\infty}$. In fact, only a slight elaboration of Connes' original proof is needed in order to get strong measurability. 
\end{remark}
 
\begin{remark}
 We refer to~\cite[Corollary 7.23]{KLPS:AIM13} for an example of non-classical pseudodifferential operator that is not measurable. 
\end{remark}

Together with the formula~(\ref{eq:Quantized.NCR1}) the trace formula~(\ref{eq:Quantized.trace-formula}) allows us to compute the NC integral of any $P\in \Psi^{-n}(M)$ from the sole knowledge of its principal symbol in local coordinates. As an application this allows us to recover the Riemannian density $\nu(g)(x)$ as follows. 

Let $\Delta_g:C^\infty(M)\rightarrow C^\infty(M)$ be the (positive) Laplace-Beltrami operator. This is an selfadjoint elliptic 2nd order differential operator with non-negative spectrum. The power $\Delta_g^{-\frac{n}{2}}$ is an operator in $\Psi^{-n}(M)$ with principal symbol $|\xi|_g^{-n}$, where $|\cdot|_g$ is the Riemannian metric on the cotangent bundle $T^*M$ (i.e., $|\xi|_g^2=\sum g^{ij}\xi_i\xi_j$). By applying the trace formula~(\ref{eq:Quantized.trace-formula}) to $f\Delta_g^{-\frac{n}{2}}$, $f\in C^\infty(M)$, we arrive at the following result. 

\begin{theorem}[ConnesÕ integration formula~\cite{Co:CMP88}] For every $f\in C^\infty(M)$, the operator $f\Delta_g^{-\frac{n}2}$ is strongly measurable, and we have
\begin{equation}
 \bint f \Delta_g^{-\frac{n}2} = c_n \int_M f\nu(g), \qquad \text{where}\ c_n:=\frac{1}{n}(2\pi)^{-n}|\bS^{n-1}|.
 \label{eq:Quantized.integration-formula} 
\end{equation}
\end{theorem}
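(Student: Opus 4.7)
My plan is to deduce this integration formula directly from Connes' trace theorem~(\ref{eq:Quantized.trace-formula}) together with the explicit local formula~(\ref{eq:Quantized.NCR1}) for the noncommutative residue. The three ingredients to assemble are: (a) membership of $f\Delta_g^{-\frac{n}{2}}$ in $\Psi^{-n}(M)$; (b) identification of its principal symbol; (c) a direct computation of the spherical integral of $|\xi|_g^{-n}$ in a local chart.

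First I would recall that $\Delta_g$ is a positive selfadjoint elliptic second order differential operator with non-negative spectrum, so its complex power $\Delta_g^{-\frac{n}{2}}$ belongs to $\Psi^{-n}(M)$ with principal symbol $|\xi|_g^{-n}$. Multiplication by $f\in C^\infty(M)$ is a zeroth order \psido, hence $P:=f\Delta_g^{-\frac{n}{2}}\in \Psi^{-n}(M)$ with homogeneous symbol of degree $-n$ equal to $p_{-n}(x,\xi)=f(x)|\xi|_g^{-n}$. Applying the trace theorem then immediately yields strong measurability of $P$ together with
\begin{equation*}
  \bint f\Delta_g^{-\frac{n}{2}} = \frac{1}{n}\Res(P) = \frac{1}{n}\int_M c_P(x),\qquad c_P(x)=(2\pi)^{-n}f(x)\left(\int_{\bS^{n-1}}|\xi|_g^{-n}d^{n-1}\xi\right)\!dx.
\end{equation*}

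The heart of the matter is then the universal local identity
\begin{equation*}
  \int_{\bS^{n-1}}|\xi|_g^{-n}\,d^{n-1}\xi=|\bS^{n-1}|\sqrt{\det g(x)},
\end{equation*}
which I would obtain as follows. Introducing Euclidean polar coordinates $\xi=r\omega$, $\omega\in\bS^{n-1}$, and radially integrating over the Euclidean ellipsoid $B_g:=\{|\xi|_g\leq 1\}$ gives
\begin{equation*}
 \Vol_{\text{eucl}}(B_g)=\int_{\bS^{n-1}}\!\int_0^{1/|\omega|_g}r^{n-1}dr\,d^{n-1}\omega=\frac{1}{n}\int_{\bS^{n-1}}|\omega|_g^{-n}d^{n-1}\omega.
\end{equation*}
On the other hand, the linear change of variables that diagonalizes the quadratic form $|\xi|_g^2=\sum g^{ij}\xi_i\xi_j$ sends $B_g$ to the Euclidean unit ball with Jacobian $\sqrt{\det g(x)}$, yielding $\Vol_{\text{eucl}}(B_g)=n^{-1}|\bS^{n-1}|\sqrt{\det g(x)}$. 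Combining the two expressions produces the claimed identity. Note that the same formula will re-appear on the NC-torus side as the definition of the spectral Riemannian density~$\tilde{\nu}(g)$.

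Plugging this into $c_P(x)$ yields $c_P(x)=(2\pi)^{-n}|\bS^{n-1}|f(x)\sqrt{\det g(x)}dx$, and since in local coordinates $\nu(g)=\sqrt{\det g(x)}dx$, we conclude
\begin{equation*}
  \bint f\Delta_g^{-\frac{n}{2}}=\frac{1}{n}(2\pi)^{-n}|\bS^{n-1}|\int_M f\,\nu(g)=c_n\int_M f\,\nu(g).
\end{equation*}
There is no genuine obstacle here beyond checking the symbol computation and the spherical integral; the only mild point of care is ensuring that the local quantities $c_P(x)dx$ glue to a well-defined density on $M$, which is guaranteed by the intrinsic nature of $\Res$ established by Guillemin--Wodzicki. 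Strong measurability itself requires no additional argument: it is inherited directly from the trace theorem applied to $P$.
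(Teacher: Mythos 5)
Your proposal is correct and mirrors the paper's route exactly: apply the trace theorem to $f\Delta_g^{-\frac{n}{2}}\in\Psi^{-n}(M)$, identify its $-n$-homogeneous symbol as $f(x)|\xi|_g^{-n}$, and evaluate the spherical integral $\int_{\bS^{n-1}}|\xi|_g^{-n}\,d^{n-1}\xi=|\bS^{n-1}|\sqrt{\det g(x)}$, which is precisely the paper's identity~(\ref{eq:Curved.norm-g-volume}). The only cosmetic difference is in how that last identity is derived: you compute the Euclidean volume of the ellipsoid $\{|\xi|_g\leq 1\}$ both radially and via a linear change of variables, whereas the paper compares two evaluations of the Gaussian integral $\int_{\R^n}e^{-|\xi|_g^2}\,d\xi$; both are elementary and equivalent.
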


\begin{remark}
Connes' integration formula even holds for any $f(x)\in L^2_g(M)$ (\emph{cf.}~\cite{KLPS:AIM13, LPS:JFA10}). 
\end{remark}

%\clearpage
One upshot of the trace formula~(\ref{eq:Quantized.trace-formula}) is the extension of the NC integral to \emph{all} \psidos, including \psidos\ that need not be weak trace class or even bounded. 
Namely, as the noncommutative residue is defined for all \psidos, it is natural to use the r.h.s.~of~(\ref{eq:Quantized.trace-formula}) as a definition of the NC integral for \psidos. That is, we set
\begin{equation}
 \bint P := \frac{1}{n} \Res (P) \qquad \text{for any $P\in \Psi^\Z(M)$}.
 \label{eq:quantized.extension-bint} 
\end{equation}
 
The integration formula~(\ref{eq:Quantized.integration-formula})  shows that the NC integral recaptures the Riemannian density. Equivalentely, this allows us to interpret $c_n^{-1} \Delta_g^{-\frac{n}2}$ a \emph{quantum volume element}. Thinking of the length element as the $n$-th root of the volume element, this leads us to interpret the operator $ds:=(c_n^{-1}\Delta_g^{-\frac{n}2})^{\frac1n}=c_n^{-\frac1n}\Delta_g^{-\frac12}$ as a \emph{quantum length element}. It is then natural to interpret  $\bint ds^k$ as a \emph{$k$-dimensional volume} for $k=1,\ldots, n$. Note this uses the extension of the NC integral to all \psidos\ since $ds^k$ is a \psido\ of order~$-k>-n$ when $k<n$. We refer to~\cite{Po:LMP08} for an elaboration of this line of thought in terms of the Dirac operator. 

When $k=n-2$ and $n\geq 3$ we obtain 
\begin{equation}
 \bint f ds^{n-2} = -c_n' \int_M f \kappa(g) \nu(g) \qquad \text{for all $f\in C^\infty(M)$},
 \label{eq:Quantized.scalar-curvature}
\end{equation}
where $\kappa(g)$ is the scalar curvature and $c_n'$ is a universal constant given by
\begin{equation}
 \frac{1}{c_n'}= 3n (4\pi)^{\frac{n}2} c_n^{\frac{n-2}{n}} \Gamma\big(\frac{n}{2}-1\big). 
 \label{eq:quantized.cn'} 
\end{equation}
This result is part of folklore. For reader's convenience a proof is included in Appendix~\ref{sec:quantized-EH}. 

When $f(x)\equiv 1$ the r.h.s.~ of~(\ref{eq:Quantized.scalar-curvature}) agrees up to a constant with the Einstein-Hilbert action. This provides us with  quantum interpretations of the Einstein-Hilbert action and of the scalar curvature. These interpretations lie at the very roots of the spectral action principal of Chamseddine-Connes~\cite{CC:CMP97}  and the concept of modular scalar curvature of NC tori  by Connes-Tretkoff~\cite{CT:Baltimore11} and Connes-Moscovici~\cite{CM:JAMS14} (see also~\cite{CS:Survey19, Co:Survey19, FK:Survey19} for recent surveys of those topics).  
 
\section{Noncommutative Tori} \label{sec:NCtori}
In this section, we review the main definitions and properties of noncommutative $n$-tori, $n\geq 2$. We refer to~\cite{Co:NCG, HLP:Part1, Ri:CM90}, and the references therein, for a more comprehensive account.
 
Throughout this paper, we let $\theta =(\theta_{jk})$ be a real anti-symmetric $n\times n$-matrix. Denote by $\theta_1, \ldots, \theta_n$ its column vectors.  We also let  $L^2(\T^n)$ be the Hilbert space of $L^2$-functions on the ordinary torus $\T^n=\R^n\slash 2\pi \Z^n$ equipped with the  inner product, 
\begin{equation} \label{eq:NCtori.innerproduct-L2}
 \scal{\xi}{\eta}= (2\pi)^{-n} \int_{\T^n} \xi(x)\overline{\eta(x)}d x, \qquad \xi, \eta \in L^2(\T^n). 
\end{equation}
 For $j=1,\ldots, n$, let $U_j:L^2(\T^n)\rightarrow L^2(\T^n)$ be the unitary operator defined by 
 \begin{equation*}
 \left( U_j\xi\right)(x)= e^{ix_j} \xi\left( x+\pi \theta_j\right), \qquad \xi \in L^2(\T^n). 
\end{equation*}
 We then have the relations, 
 \begin{equation} \label{eq:NCtori.unitaries-relations}
 U_kU_j = e^{2i\pi \theta_{jk}} U_jU_k, \qquad j,k=1, \ldots, n. 
\end{equation}

The \emph{noncommutative torus} $A_\theta=C(\T^n_\theta)$ is the $C^*$-algebra generated by the unitary operators $U_1, \ldots, U_n$.  For $\theta=0$ we obtain the $C^*$-algebra $C(\T^n)$ of continuous functions on the ordinary $n$-torus $\T^n$. Note that~(\ref{eq:NCtori.unitaries-relations}) implies that $A_\theta$ is the closure in $\cL(L^2(\T^n))$ of the linear span of the unitary operators, 
 \begin{equation*}
 U^k:=U_1^{k_1} \cdots U_n^{k_n}, \qquad k=(k_1,\ldots, k_n)\in \Z^n. 
\end{equation*}

 Let $\tau:\cL(L^2(\T^n))\rightarrow \C$ be the state defined by the constant function $1$, i.e., 
 \begin{equation*}
 \tau (T)= \scal{T1}{1}=\int_{\T^n} (T1)(x) d  x, \qquad T\in \cL\left(L^2(\T^n)\right).
\end{equation*}
This induces a continuous tracial state on the $C^*$-algebra $A_\theta$ such that $\tau(1)=1$ and $\tau(U^k)=0$ for $k\neq 0$. 
 The GNS construction then allows us to associate with $\tau$ a $*$-representation of $A_\theta$ as follows. 
 
 Let $\scal{\cdot}{\cdot}$ be the sesquilinear form on $A_\theta$ defined by
\begin{equation}
 \scal{u}{v} = \tau\left( uv^* \right), \qquad u,v\in A_\theta. 
 \label{eq:NCtori.cAtheta-innerproduct}
\end{equation}
Note that the family $\{ U^k; k \in \Z^n\}$ is orthonormal with respect to this sesquilinear form. We let $\cH_\theta=L^2(\T^n_\theta)$ the Hilbert space arising from the completion of $\cA_\theta^0$ with respect to the pre-inner product~(\ref{eq:NCtori.cAtheta-innerproduct}). The action of $A_\theta$ on itself by left-multiplication uniquely extends to a $*$-representation of $A_\theta$ in $\cH_\theta$. When $\theta=0$ we recover the Hilbert space $L^2(\T^n)$ with the inner product~(\ref{eq:NCtori.innerproduct-L2}) and the representation of $C(\T^n)$ by bounded multipliers. In addition,  as $(U^k)_{k \in \Z^n}$ is an orthonormal basis of $\cH_\theta$, every $u\in \cH_\theta$ can be uniquely written as 
\begin{equation} \label{eq:NCtori.Fourier-series-u}
 u =\sum_{k \in \Z^n} u_k U^k, \qquad u_k:=\scal{u}{U^k}, 
\end{equation}
where the series converges in $\cH_\theta$. When $\theta =0$ we recover the Fourier series decomposition in  $L^2(\T^n)$. 

The natural action of $\R^n$ on $\T^n$ by translation gives rise to an action on $\cL(L^2(\T^n))$. This induces a $*$-action $(s,u)\rightarrow \alpha_s(u)$ on $A_\theta$ given by 
\begin{equation*}
% \label{eq:NCtori.action-on-U^k}
\alpha_s(U^k)= e^{is\cdot k} U^k, \qquad  \text{for all $k\in \Z^n$ and $s\in \R^n$}. 
\end{equation*}
This action is strongly continuous, and so we obtain a $C^*$-dynamical system $(A_\theta, \R^n, \alpha)$. We are especially interested in the subalgebra $\cA_\theta=C^\infty(\T^n_\theta)$ of smooth elements of this $C^*$-dynamical system (a.k.a.~\emph{smooth noncommutative torus}). Namely,  
\begin{equation*}
 \cA_\theta:=\biggl\{ u \in A_\theta; \ \alpha_s(u) \in C^\infty(\R^n; A_\theta)\biggr\}. 
\end{equation*}
The unitaries $U^k$, $k\in \Z^n$, are contained in $\cA_\theta$, and so $\cA_\theta$ is a dense subalgebra of $A_\theta$. Denote by $\cS(\Z^n)$ the space of rapid-decay sequences with complex entries. In terms of the Fourier series decomposition~(\ref{eq:NCtori.Fourier-series-u}) we have
\begin{equation*}
 \cA_\theta=\bigg\{ u=\sum_{k\in \Z^n} u_k U^k; (u_k)_{k\in \Z^n}\in  \cS(\Z^n)\bigg\}. 
\end{equation*}
When $\theta=0$ we recover the algebra $C^\infty(\T^n)$ of smooth functions on the ordinary torus $\T^n$ and the Fourier-series description of this algebra. 

For $j=1,\ldots, n$, let $\delta_j:\cA_\theta\rightarrow \cA_\theta$ be the  derivation defined by 
\begin{equation*}
 \delta_j(u) = D_{s_j} \alpha_s(u)|_{s=0}, \qquad u\in \cA_\theta, 
\end{equation*}
where we have set $D_{s_j}=\frac{1}{i}\partial_{s_j}$. When $\theta=0$ the derivation $\delta_j$ is just the derivation $D_{x_j}=\frac{1}{i}\frac{\partial}{\partial x_j}$ on $C^\infty(\T^n)$. In general, for $j,l=1,\ldots, n$, we have
\begin{equation*}
 \delta_j(U_l) = \left\{ 
 \begin{array}{ll}
 U_j & \text{if $l=j$},\\
 0 & \text{if $l\neq j$}. 
\end{array}\right.
\end{equation*}
More generally, given any multi-order $\beta \in \N_0^n$, define 
\begin{equation*}
 \delta^\beta(u) = D_s^\beta \alpha_s(u)|_{s=0} = \delta_1^{\beta_1} \cdots \delta_n^{\beta_n}(u), \qquad u\in \cA_\theta. 
\end{equation*}
We endow $\cA_\theta$ with the locally convex topology defined by the semi-norms,
\begin{equation}
 \cA_\theta \ni u \longrightarrow \left\|\delta^\beta (u)\right\| ,  \qquad \beta\in \N_0^n. 
\label{eq:NCtori.cAtheta-semi-norms}
\end{equation}
With the involution inherited from $A_\theta$ this turns $\cA_\theta$ into a (unital)  Fr\'echet $*$-algebra. The Fourier series~(\ref{eq:NCtori.Fourier-series-u}) of every $u\in \cA_\theta$ converges in $\cA_\theta$ with respect to this topology. In addition, it can be shown that $\cA_\theta$ is closed under holomorphic functional calculus 
(see, e.g., \cite{Co:AIM81, HLP:Part1}). 

\section{Pseudodifferential Operators on Noncommutative Tori} \label{sec:PsiDOs}
In this section, we review the main definitions and properties of pseudodifferential operators on noncommutative tori. 

\subsection{Symbol classes}
There are various classes of symbols on noncommutative tori. 

\begin{definition}[Standard Symbols; see~\cite{Ba:CRAS88, Co:CRAS80}]
$\stS^m (\Rn ; \cA_\theta)$, $m\in\R$, consists of maps $\rho(\xi)\in C^\infty (\Rn ; \cA_\theta)$ such that, for all multi-orders $\alpha$ and $\beta$, there exists $C_{\alpha \beta} > 0$ such that
\begin{equation*} 
%\label{eq:Symbols.standard-estimates}
\norm{\delta^\alpha \partial_\xi^\beta \rho(\xi)} \leq C_{\alpha \beta} \left( 1 + | \xi | \right)^{m - | \beta |} \qquad \forall \xi \in \R^n .
\end{equation*}
\end{definition}
 
\begin{definition} 
  $\cS(\R^n; \cA_\theta)$ consists of maps $\rho(\xi)\in C^\infty (\Rn ; \cA_\theta)$ such that, for all  $N\geq 0$ and multi-orders $\alpha$, $\beta$, there exists $C_{N\alpha \beta} > 0$ such that
\begin{equation*} 
%\label{eq:Symbols.standard-estimates}
\norm{\delta^\alpha \partial_\xi^\beta \rho(\xi)} \leq C_{N\alpha \beta} \left( 1 + | \xi | \right)^{-N} \qquad \forall \xi \in \R^n .
\end{equation*}
\end{definition}

\begin{remark}\label{rmk:Symbols.symbols-intersection}
$\cS(\R^n; \cA_\theta)=\bigcap_{m\in\R}\stS^m( \R^n;\cA_\theta)$.
\end{remark}

\begin{definition}[Homogeneous Symbols] 
$S_q (\R^n; \cA_\theta )$, $q \in \C$, consists of  maps $\rho(\xi) \in C^\infty(\R^n\backslash 0;\cA_\theta)$ that are homogeneous of degree $q$, i.e., 
$\rho( t \xi ) =t^q \rho(\xi)$ for all $\xi \in \R^n \backslash 0$ and $t > 0$. 
\end{definition}

\begin{remark}
 If $\rho(\xi)\in S_q(\R^n;\cA_\theta)$ and $\chi(\xi)\in C^\infty_c(\R^n)$ is such that $\chi(\xi)=1$ near $\xi=0$, then $(1-\chi(\xi))\rho(\xi)\in \stS^{\Re q}( \R^n;\cA_\theta)$. 
\end{remark}

\begin{definition}[Classical Symbols; see \cite{Ba:CRAS88}]\label{def:Symbols.classicalsymbols}
$S^q (\R^n; \cA_\theta )$, $q \in \C$, consists of maps $\rho(\xi)\in C^\infty(\R^n;\cA_\theta)$ that admit an asymptotic expansion,
\begin{equation*}
\rho(\xi) \sim \sum_{j \geq 0} \rho_{q-j} (\xi),  \qquad \rho_{q-j} \in S_{q-j} (\R^n; \cA_\theta ), 
\end{equation*}
where $\sim$ means that, for all $N\geq 0$ and multi-orders $\alpha$, $\beta$, there exists $C_{N\alpha\beta} >0$ such that, for all $\xi \in \R^n$ with $| \xi | \geq 1$, we have
\begin{equation} \label{eq:Symbols.classical-estimates}
\Big\| \delta^\alpha \partial_\xi^\beta \big( \rho - \sum_{j<N} \rho_{q-j} \big)(\xi)\Big\| \leq C_{N\alpha\beta} | \xi |^{\Re{q}-N-| \beta |} .
\end{equation}
\end{definition}

\begin{remark} \label{rmk:Symbols.classical-inclusion}
$S^q(\R^n;\cA_\theta)\subset \stS^{\Re{q}}(\R^n;\cA_\theta)$. 
\end{remark}

\begin{example}
 Any polynomial map $\rho(\xi)=\sum_{|\alpha|\leq m} a_\alpha \xi^\alpha$, $a_\alpha\in \cA_\theta$, $m\in \N_0$, is in $S^m(\R^n;\cA_\theta)$.  
\end{example}

\begin{remark}
 It is also convenient to consider scalar-valued symbols. In this case we denote by $\stS^m(\R^n)$, $\cS(\R^n)$, $S_q(\R^n)$ and $S^q(\R^n)$ the corresponding  classes of scalar-valued symbols. We will consider them as sub-classes of the $\cA_\theta$-valued symbol classes via the natural embedding of $\C$ into $\cA_\theta$. 
\end{remark}

\subsection{Pseudo-differential operators}
Given $\rho(\xi)\in \stS^m(\R^n;\cA_\theta)$, $m\in \R$. We let $P_\rho:\cA_\theta \rightarrow \cA_\theta$ be the linear operator defined by
\begin{equation} \label{eq:PsiDOs.PsiDO-definition}
P_\rho u = (2\pi)^{-n}\iint e^{is\cdot\xi}\rho(\xi)\alpha_{-s}(u)ds d\xi, \qquad u \in \cA_\theta. 
\end{equation}
The above integral is meant as an oscillating integral (see~\cite{HLP:Part1}). Equivalently, for all $u=\sum_{k\in \Z^n} u_k U^k$ in $\cA_\theta$,  we have
\begin{equation} \label{eq:toroidal.Prhou-equation}
                P_{\rho}u = \sum_{k\in \Z^n} u_k \rho(k)U^k.  
\end{equation}
In any case, this defines a continuous linear operator $P_\rho:\cA_\theta \rightarrow \cA_\theta$ (see~\cite{HLP:Part1}).

\begin{definition}
$\Psi^q(\T^n_\theta)$,  $q\in \C$, consists of all linear operators $P_\rho:\cA_\theta\rightarrow \cA_\theta$ with $\rho(\xi)$ in $S^q(\R^n; \cA_\theta)$.
\end{definition}

\begin{remark} \label{rem:PsiDOs.symbol-uniqueness}
If $P=P_\rho$ with $\rho(\xi)$ in $S^q(\R^n; \cA_\theta)$, $\rho(\xi)\sim \sum \rho_{q-j}(\xi)$, then $\rho(\xi)$ is called a \emph{symbol} for $P$. This symbol is not unique, but its restriction to $\Z^n$ and its class modulo $\cS(\R^n;\cA_\theta)$ are unique (see~\cite{HLP:Part1}). As a result, the homogeneous symbols $\rho_{q-j}(\xi)$ are uniquely determined by $P$. The leading symbol $\rho_q(\xi)$ is called the \emph{principal symbol} of $P$. 
\end{remark}

\begin{example}
 A differential operator on $\cA_\theta$ is of the form $P=\sum_{|\alpha|\leq m}a_\alpha\delta^\alpha$, $a_\alpha\in\cA_\theta$ (see~\cite{Co:CRAS80, Co:NCG}). This is a \psido\ of order $m$ with symbol $\rho(\xi)= \sum a_\alpha \xi^\alpha$ (see~\cite{HLP:Part1}).
\end{example}

Let $\cA_\theta'$ be the topological dual of $\cA_\theta$ equipped with its strong dual topology. Note that $\cA_\theta$ embeds into $\cA_\theta'$ as a dense subspace (see, e.g., \cite{HLP:Part1}). A  linear operator $R:\cA_\theta \rightarrow \cA_\theta'$ is called \emph{smoothing} when its range is contained in $\cA_\theta$ and it uniquely extends to a continuous linear map $R:\cA_\theta'\rightarrow \cA_\theta$. We denote by $\Psi^{-\infty}(\T^n_\theta)$ the space of smoothing operators. 

\begin{proposition}[{\cite[Proposition~6.30]{HLP:Part1}}] \label{prop:PsiDOs.smoothing-operator-characterization}
A linear operator $R:\cA_\theta \rightarrow \cA_\theta$ is smoothing if and only if this  the \psido\ associated with some symbol in $\cS(\R^n; \cA_\theta)$.   
\end{proposition}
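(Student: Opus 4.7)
The plan is to prove both implications, with most of the work in the converse. For the forward direction, start from the formula $P_\rho u=\sum_{k\in\Z^n}u_k\rho(k)U^k$ from~(\ref{eq:toroidal.Prhou-equation}). Any $u\in\cA_\theta'$ has Fourier coefficients $u_k=\langle u,(U^k)^*\rangle$ of at most polynomial growth, since continuity of $u$ on $\cA_\theta$ gives $|u_k|\leq C\|\delta^\beta(U^k)^*\|=C|k|^{|\beta|}$ for some $\beta$. Combined with the rapid decay $\|\delta^\gamma\rho(k)\|=O(|k|^{-N})$ (valid for all $N,\gamma$ since $\rho\in\cS(\R^n;\cA_\theta)$) and Leibniz applied to $\delta^\alpha(\rho(k)U^k)$ using $\delta^\alpha U^k=k^\alpha U^k$, one finds that the series converges absolutely in every seminorm $\|\delta^\alpha\cdot\|$ of $\cA_\theta$. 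The same estimates make $u\mapsto P_\rho u$ continuous from $\cA_\theta'$ to $\cA_\theta$.

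For the converse, suppose $R$ is smoothing, and let $R:\cA_\theta'\to\cA_\theta$ denote its continuous extension. Motivated by the identity $P_\rho U^k=\rho(k)U^k$, I would define a candidate symbol on the lattice by $\rho(k):=R(U^k)(U^k)^*$ for $k\in\Z^n$. The crux is to prove that $(\rho(k))_{k\in\Z^n}$ is rapidly decreasing in $\cA_\theta$ in \emph{every} seminorm, i.e.\ $\|\delta^\beta\rho(k)\|=O(|k|^{-N})$ for all $\beta,N$. The key observation is that $\{U^k\}_{k\in\Z^n}$ itself tends rapidly to zero in the strong dual $\cA_\theta'$: for any bounded set $B\subset\cA_\theta$ and any $\beta$, the estimate $|v_{-k}|\leq|k|^{-|\beta|}\|\delta^\beta v\|$ (uniform over $v\in B$ since $B$ is bounded in every seminorm) yields $\sup_{v\in B}|\langle U^k,v\rangle|=O(|k|^{-|\beta|})$. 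Continuity of $R:\cA_\theta'\to\cA_\theta$ then gives $\|\delta^\gamma R(U^k)\|=O(|k|^{-N})$ for all $\gamma,N$; combined with $\delta^\alpha(U^k)^*=(-k)^\alpha(U^k)^*$ and Leibniz, this transfers to the desired bound on $\|\delta^\beta\rho(k)\|$.

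Finally, I would promote the lattice sequence to a full Schwartz symbol. Choose $\chi\in C^\infty_c(\R^n)$ with $\chi(0)=1$ and $\chi(m)=0$ for $m\in\Z^n\setminus\{0\}$, and set $\tilde\rho(\xi):=\sum_{k\in\Z^n}\rho(k)\chi(\xi-k)$. The rapid decay of $(\rho(k))$ in every seminorm $\|\delta^\beta\cdot\|$ implies $\tilde\rho\in\cS(\R^n;\cA_\theta)$, and $\tilde\rho(k)=\rho(k)$ on $\Z^n$ by construction. By~(\ref{eq:toroidal.Prhou-equation}), $P_{\tilde\rho}U^k=\tilde\rho(k)U^k=R(U^k)$, so $R$ and $P_{\tilde\rho}$ agree on finite linear combinations of the $U^k$; by continuity of both operators on $\cA_\theta$ and density of such combinations (any $u\in\cA_\theta$ is the limit of its Fourier partial sums), we conclude $R=P_{\tilde\rho}$. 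The main obstacle is the lattice rapid-decay step in the middle paragraph; the extension and matching are routine once it is in place.
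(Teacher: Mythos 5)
The paper states this result only as a citation to~\cite[Proposition~6.30]{HLP:Part1} and contains no proof of its own, so there is nothing within the paper to compare your argument against; I can only assess its correctness. Your proof is correct, and the central insight is exactly right: because $\{U^k\}$ tends to zero rapidly in the strong dual $\cA_\theta'$ (which you verify via $\sup_{v\in B}|\langle U^k,v\rangle|=O(|k|^{-N})$ for every bounded $B$ and every $N$), continuity of the extension $R:\cA_\theta'\to\cA_\theta$ forces $\|\delta^\gamma R(U^k)\|=O(|k|^{-N})$, and hence rapid decay of $\rho(k)=R(U^k)(U^k)^*$ in every seminorm. Extracting the lattice symbol this way, interpolating by a compactly supported bump with $\chi(0)=1$ and $\chi|_{\Z^n\setminus 0}=0$, and matching $R$ with $P_{\tilde\rho}$ on the dense span of the $U^k$'s is the natural route. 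The forward direction (rapid decay of $\rho(k)$ against polynomial growth of $u_k$) is likewise sound.

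A few technicalities are worth tightening but do not affect correctness. First, continuity of $u\in\cA_\theta'$ gives $|u_k|\leq C\sum_{|\beta|\leq m}\|\delta^\beta(U^k)^*\|$ for some finite $m$ (a bound by finitely many seminorms, not a single one), and $\|\delta^\beta(U^k)^*\|=|k^\beta|\leq(1+|k|)^{|\beta|}$; this still gives the needed polynomial bound. Second, in the forward direction you should say a word about why boundedness of $u\mapsto P_\rho u$ on bounded subsets of $\cA_\theta'$ implies continuity: this uses that $\cA_\theta$ is nuclear Fr\'echet (being isomorphic to the sequence space $\cS(\Z^n)$), so its strong dual is bornological. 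Third, in the final matching step, the original $R:\cA_\theta\to\cA_\theta$ is automatically continuous because it is the composition of the continuous inclusion $\cA_\theta\hookrightarrow\cA_\theta'$ with the continuous extension $R:\cA_\theta'\to\cA_\theta$; you invoke its continuity without stating where it comes from.
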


\begin{remark}
 As $\cS(\R^n;\cA_\theta)=\bigcap_{q\in \C}S^q(\R^n;\cA_\theta)$, we see that 
 $\Psi^{-\infty}(\T^n_\theta)= \bigcap_{q\in \C}\Psi^q(\T^n_\theta)$. 
\end{remark}

\subsection{Composition of \psidos}
Suppose we are given symbols $\rho_1(\xi)\in\stS^{m_1}(\Rn;\cA_\theta)$, $m_1\in\R$, and $\rho_2(\xi)\in\stS^{m_2}(\Rn;\cA_\theta)$, $m_2\in\R$. As $P_{\rho_1}$ and $P_{\rho_2}$ are linear operators on $\cA_\theta$, the composition $P_{\rho_1}P_{\rho_2}$ makes sense as such an operator.  
In addition, we define the map $\rho_1\sharp\rho_2:\Rn\rightarrow \cA_\theta$ by
\begin{equation} \label{eq:Composition.symbol-sharp}
\rho_1\sharp\rho_2(\xi) = (2\pi)^{-n}\iint e^{it\cdot\eta}\rho_1(\xi+\eta)\alpha_{-t}[\rho_2(\xi)]dt d\eta , \qquad \xi\in\Rn , 
\end{equation}
where the above integral is meant as an oscillating integral (see~\cite{Ba:CRAS88, HLP:Part2}). 

\begin{proposition}[see \cite{Ba:CRAS88, Co:CRAS80, HLP:Part2}] \label{prop:Composition.sharp-continuity-standard-symbol}
Let $\rho_1(\xi)\in \stS^{m_1}(\R^n; \cA_\theta)$ and  $\rho_2(\xi)\in \stS^{m_2}(\R^n; \cA_\theta)$, $m_1,m_2\in \R$. 
\begin{enumerate}
 \item $\rho_1\sharp\rho_2(\xi)\in \stS^{m_1+m_2}(\Rn;\cA_\theta)$, and we have $ \rho_1\sharp\rho_2(\xi) \sim \sum\frac{1}{\alpha !}\partial_\xi^\alpha\rho_1(\xi)\delta^\alpha\rho_2(\xi)$. 

 \item The operators $P_{\rho_1}P_{\rho_2}$ and $P_{\rho_1\sharp \rho_2}$ agree.           
\end{enumerate}
\end{proposition}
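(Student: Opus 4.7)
My plan is to prove (1) first (so that the $\sharp$-product, and hence the operator $P_{\rho_1\sharp\rho_2}$, is a well-defined object) and then derive (2) by direct manipulation of the defining oscillatory integrals. Given $\rho_j\in\stS^{m_j}(\R^n;\cA_\theta)$, the integrand in
\begin{equation*}
\rho_1\sharp\rho_2(\xi)=(2\pi)^{-n}\iint e^{it\cdot\eta}\rho_1(\xi+\eta)\alpha_{-t}[\rho_2(\xi)]\,dt\,d\eta
\end{equation*}
is merely a symbol in $(\eta,t)$ with no absolute decay. I would regularize this by inserting a cutoff $\chi(\varepsilon t,\varepsilon\eta)$ and then perform repeated integration by parts with $\langle t\rangle^{-2k}(1-\Delta_\eta)^k$ (absorbing powers of $\eta$ using the symbol estimates on $\rho_1$) and $\langle \eta\rangle^{-2\ell}(1-\Delta_t)^\ell$ (absorbing powers of $t$ using that $\alpha_{-t}$ preserves the Fr\'echet topology on $\cA_\theta$). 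This produces an absolutely convergent integral, independent of $\chi$ and of $\varepsilon$ as $\varepsilon\to 0$, and the bound $\|\rho_1\sharp\rho_2(\xi)\|\lesssim \langle\xi\rangle^{m_1+m_2}$ falls out of the estimates for the regularized integrand. Differentiation under the integral sign yields the full symbol estimates $\|\delta^\beta\partial_\xi^\gamma(\rho_1\sharp\rho_2)(\xi)\|\lesssim \langle \xi\rangle^{m_1+m_2-|\gamma|}$, so $\rho_1\sharp\rho_2\in\stS^{m_1+m_2}(\R^n;\cA_\theta)$.

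For the asymptotic expansion I would Taylor-expand $\rho_1(\xi+\eta)=\sum_{|\alpha|<N}\tfrac{\eta^\alpha}{\alpha!}\partial_\xi^\alpha\rho_1(\xi)+R_N(\xi,\eta)$ and plug into the formula. For each polynomial term, the identity $\eta^\alpha e^{it\cdot\eta}=(-i\partial_t)^\alpha e^{it\cdot\eta}$, followed by integration by parts in $t$ and Fourier inversion in $\eta$, identifies
\begin{equation*}
(2\pi)^{-n}\iint e^{it\cdot\eta}\eta^\alpha\alpha_{-t}[\rho_2(\xi)]\,dt\,d\eta=\delta^\alpha\rho_2(\xi),
\end{equation*}
using $\partial_t^\alpha\alpha_{-t}(u)|_{t=0}=(-i)^{|\alpha|}\delta^\alpha u$ (the sign flip from $\alpha_{-t}$ cancels the $(-1)^{|\alpha|}$ from the integrations by parts). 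This yields the desired expansion $\rho_1\sharp\rho_2\sim\sum\frac{1}{\alpha!}\partial_\xi^\alpha\rho_1\cdot\delta^\alpha\rho_2$, while the integral form of $R_N$ supplies the extra factor of $\eta^N$ that, after the same integration-by-parts procedure as above, gains $N$ orders of decay in the remainder symbol estimate.

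Part (2) then follows by unfolding the two oscillatory integrals into
\begin{equation*}
P_{\rho_1}P_{\rho_2}u=(2\pi)^{-2n}\iiiint e^{is\cdot\xi}e^{it\cdot\eta}\rho_1(\xi)\alpha_{-s}[\rho_2(\eta)]\alpha_{-(s+t)}(u)\,ds\,d\xi\,dt\,d\eta,
\end{equation*}
where I have used that $\alpha_{-s}$ is a continuous $*$-endomorphism which commutes with oscillatory integration. Setting $\tau=s+t$ and $\zeta=\xi-\eta$ factors the phase as $e^{is\cdot\zeta}e^{i\tau\cdot\eta}$, after which the $(s,\zeta)$-integral detaches as $\rho_1\sharp\rho_2(\eta)$ and the outer $(\tau,\eta)$-integral assembles into $P_{\rho_1\sharp\rho_2}u$. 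The main obstacle throughout is the careful justification of the oscillatory-integral manipulations used to prove (1): because $\rho_1,\rho_2$ may have arbitrary real order and take values in the Fr\'echet algebra $\cA_\theta$, one must keep track of how many seminorms $\|\delta^\beta\partial_\xi^\gamma(\cdot)\|$ are needed and verify uniform control as the cutoff is removed. Once the oscillatory integral is put on a rigorous footing, the sign bookkeeping between $\delta$ and $\partial_s\alpha_s$, as well as the change of variables in (2), are routine.
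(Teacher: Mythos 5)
The paper does not prove this proposition; it cites it to \cite{Ba:CRAS88, Co:CRAS80, HLP:Part2} and uses it as a black box. Your proposal follows the standard argument that appears in those references (most explicitly in \cite{HLP:Part2}): regularize the oscillatory integral with a cutoff and integration by parts to establish (1) and the symbol estimates, Taylor-expand $\rho_1(\xi+\eta)$ to get the asymptotic expansion via the identity $(2\pi)^{-n}\iint e^{it\cdot\eta}\eta^\alpha\alpha_{-t}[\rho_2(\xi)]\,dt\,d\eta=\delta^\alpha\rho_2(\xi)$, and prove (2) by the change of variables $\tau=s+t$, $\zeta=\xi-\eta$ after unfolding the composed oscillatory integrals. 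Your sign bookkeeping between $D_t$, $\alpha_{-t}$, and $\delta^\alpha$ is correct, and the change of variables in (2) is unimodular, so the manipulations go through; the only substance left to fill in, as you note, is the quantitative Fubini/interchange justification for $\cA_\theta$-valued oscillatory integrals, which is precisely the technical content of the cited sources.
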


\begin{corollary}\label{cor:PsiDOs.composition-classical}
For $i=1,2$, let $P_i\in \Psi^{q_i}(\cA_\theta)$, $q_i\in \C$, have principal symbol $\rho_i(\xi)$. Then the composition  $P_1P_2$ is an operator in $\Psi^{q_1+q_2}(\cA_\theta)$ and has principal symbol $\rho_1(\xi)\rho_2(\xi)$. 
\end{corollary}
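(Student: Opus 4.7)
The plan is to reduce the corollary to Proposition~\ref{prop:Composition.sharp-continuity-standard-symbol} by reorganizing the asymptotic expansion of $\rho_1\sharp \rho_2$ according to the homogeneous grading attached to classical symbols. Concretely, I would choose symbols $\tilde\rho_i(\xi)\in S^{q_i}(\R^n;\cA_\theta)$ representing $P_i$, with homogeneous components $\tilde\rho_i(\xi)\sim \sum_{j\geq 0}\rho_{q_i-j}(\xi)$ and $\rho_{q_i}(\xi)=\rho_i(\xi)$ the given principal symbol. By Remark~\ref{rmk:Symbols.classical-inclusion}, $\tilde\rho_i(\xi)\in \stS^{\Re q_i}(\R^n;\cA_\theta)$, so Proposition~\ref{prop:Composition.sharp-continuity-standard-symbol} applies and gives $P_1P_2=P_{\tilde\rho_1\sharp\tilde\rho_2}$ with $\tilde\rho_1\sharp\tilde\rho_2\in \stS^{\Re q_1+\Re q_2}(\R^n;\cA_\theta)$ and standard-symbol asymptotic expansion
\begin{equation*}
 \tilde\rho_1\sharp\tilde\rho_2(\xi)\sim \sum_{\alpha}\frac{1}{\alpha!}\partial_\xi^\alpha \tilde\rho_1(\xi)\,\delta^\alpha\tilde\rho_2(\xi).
\end{equation*}

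Next I would substitute the homogeneous expansions of $\tilde\rho_1$ and $\tilde\rho_2$ into each term. For fixed $\alpha, j_1, j_2$, the product $\frac{1}{\alpha!}\partial_\xi^\alpha \rho_{q_1-j_1}(\xi)\,\delta^\alpha\rho_{q_2-j_2}(\xi)$ lies in $S_{q_1+q_2-k}(\R^n;\cA_\theta)$ with $k=|\alpha|+j_1+j_2$. Grouping by $k$, I would define
\begin{equation*}
 \rho_{q_1+q_2-k}(\xi):=\sum_{|\alpha|+j_1+j_2=k}\frac{1}{\alpha!}\partial_\xi^\alpha\rho_{q_1-j_1}(\xi)\,\delta^\alpha\rho_{q_2-j_2}(\xi)\in S_{q_1+q_2-k}(\R^n;\cA_\theta).
\end{equation*}
The $k=0$ term is simply $\rho_{q_1}(\xi)\rho_{q_2}(\xi)=\rho_1(\xi)\rho_2(\xi)$, which is the announced principal symbol.

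It then remains to upgrade the standard asymptotic expansion to a classical one in the sense of Definition~\ref{def:Symbols.classicalsymbols}, i.e.\ to verify that for every $N\geq 0$ and multi-orders $\gamma,\beta$ one has
\begin{equation*}
 \Big\|\delta^\gamma\partial_\xi^\beta\Big(\tilde\rho_1\sharp\tilde\rho_2-\sum_{k<N}\rho_{q_1+q_2-k}\Big)(\xi)\Big\|\leq C_{N\gamma\beta}|\xi|^{\Re(q_1+q_2)-N-|\beta|}
\end{equation*}
for $|\xi|\geq 1$. To this end I would split $\sum_{k<N}\rho_{q_1+q_2-k}$ into the finite partial sum coming from the truncations $\tilde\rho_i^{(N)}:=\sum_{j<N}\rho_{q_i-j}$ of $\tilde\rho_1,\tilde\rho_2$ (after multiplying by a cutoff vanishing near the origin so that the truncations belong to $S^{q_i}$), plus a remainder of negligible order. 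Applying Proposition~\ref{prop:Composition.sharp-continuity-standard-symbol} to the $\stS$-remainders $\tilde\rho_i-\tilde\rho_i^{(N)}\in \stS^{\Re q_i-N}(\R^n;\cA_\theta)$ and using the homogeneity estimates for the leftover finite sum of terms with $k\geq N$, yields the required classical bound.

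The main obstacle is the last step: it is purely bookkeeping, but one has to be careful to combine the standard-symbol asymptotic estimates from Proposition~\ref{prop:Composition.sharp-continuity-standard-symbol} with the homogeneity in the region $|\xi|\geq 1$ without losing control of the degree $N$ — this is precisely the mechanism by which standard asymptotic expansions of products of classical symbols collapse to classical expansions, and it is where the hypothesis $\rho_i\in S^{q_i}$ (not just $\stS^{\Re q_i}$) is essentially used.
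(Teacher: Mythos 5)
Your proof is correct and is the standard derivation of the composition rule for classical symbols from the composition rule for standard symbols: substitute the homogeneous expansions into the $\sharp$-asymptotics, regroup by total homogeneity degree $k=|\alpha|+j_1+j_2$, and control the error using cut-off truncations together with Proposition~\ref{prop:Composition.sharp-continuity-standard-symbol} on the $\stS$-remainders. Note that the paper itself does not prove Corollary~\ref{cor:PsiDOs.composition-classical}; it states it as an immediate consequence of Proposition~\ref{prop:Composition.sharp-continuity-standard-symbol}, whose detailed proof and the passage to classical symbols are deferred to the cited references (in particular~\cite{Ba:CRAS88, HLP:Part2}), so there is no in-paper argument to compare against. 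Your bookkeeping is sound: the leftover homogeneous terms with $k\geq N$ are finite in number and, once cut off near the origin, lie in $\stS^{\Re q_1+\Re q_2-N}(\R^n;\cA_\theta)$, which is exactly what is needed for the classical estimate~(\ref{eq:Symbols.classical-estimates}) on $|\xi|\geq 1$.
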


\subsection{Boundedness and infinitesimalness} 
A detailed account on spectral theoretic properties of \psidos\ is given in~\cite{HLP:Part2}. In this paper we will only need the following results. 

\begin{proposition}[see~\cite{HLP:Part2}] \label{prop:PsiDOs.boundedness} 
 Let $\rho(\xi)\in \stS^m(\R^n;\cA_\theta)$, $m\leq 0$. Then the operator $P_\rho$ uniquely extends to a bounded operator 
 $P_\rho:\cH_\theta \rightarrow \cH_\theta$. We obtain a compact operator when $m<0$. 
\end{proposition}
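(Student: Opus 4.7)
The plan is to combine a direct Hilbert-Schmidt computation in the very negative range with a Calder\'on-Vaillancourt style square-root bootstrap for the remaining orders, and then deduce compactness by approximation through smoothing operators. Uniqueness of the extension is automatic since $\cA_\theta$ is dense in $\cH_\theta$, so throughout I may work on $u\in \cA_\theta$ and produce a bound $\norm{P_\rho u}_{\cH_\theta}\leq C\norm{u}_{\cH_\theta}$ with $C$ controlled by finitely many seminorms of $\rho$.

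First I would treat the range $m<-\tfrac{n}{2}$ by a direct Hilbert-Schmidt estimate: the toroidal formula~(\ref{eq:toroidal.Prhou-equation}), the trace property of $\tau$, and the orthonormality of $(U^k)_{k\in \Z^n}$ yield
\begin{equation*}
\sum_{k\in \Z^n}\norm{P_\rho U^k}_{\cH_\theta}^2 \,=\, \sum_{k\in \Z^n}\tau\bigl(\rho(k)^{\ast}\rho(k)\bigr) \,\leq\, \sum_{k\in \Z^n}\norm{\rho(k)}^2 \,\leq\, C\sum_{k\in\Z^n}(1+|k|)^{2m},
\end{equation*}
which converges for $2m<-n$, so $P_\rho$ is Hilbert-Schmidt, and in particular bounded and compact, in this regime. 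For the full range $m\leq 0$ I would run a square-root bootstrap: pick $M>\sup_\xi\norm{\rho(\xi)}^2$ (finite because $m\leq 0$), set $r(\xi):=M-\rho(\xi)^{\ast}\rho(\xi)\in \stS^0(\R^n;\cA_\theta)$, and form the pointwise square root $q(\xi):=r(\xi)^{1/2}$. Granting $q(\xi)\in \stS^0(\R^n;\cA_\theta)$ (the obstacle discussed below), Proposition~\ref{prop:Composition.sharp-continuity-standard-symbol} gives $q\sharp q=r+r_1$ with $r_1\in \stS^{-1}(\R^n;\cA_\theta)$, and after accounting for the standard adjoint corrections one arrives at $\norm{P_\rho u}_{\cH_\theta}^2\leq M\norm{u}_{\cH_\theta}^2+|\acout{P_{r_1'}u}{u}|$ for some $r_1'\in \stS^{-1}(\R^n;\cA_\theta)$. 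Iterating this bootstrap on $r_1'$, then on an $\stS^{-2}$ residue, and so on, after finitely many steps the residual symbol lies in $\stS^{m''}$ with $m''<-\tfrac{n}{2}$ and is Hilbert-Schmidt bounded by the first step. Chaining by Cauchy-Schwarz delivers the desired $\cH_\theta$-bound on $P_\rho$ with constant controlled by finitely many seminorms of $\rho$.

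For the compactness statement when $m<0$, I would approximate $\rho$ by the truncated symbols $\rho_R(\xi):=\chi(\xi/R)\rho(\xi)$, where $\chi\in C_c^\infty(\R^n)$ equals~$1$ near the origin. Each $\rho_R$ has compact $\xi$-support and thus lies in $\cS(\R^n;\cA_\theta)$; by Proposition~\ref{prop:PsiDOs.smoothing-operator-characterization} the operator $P_{\rho_R}$ is smoothing, hence Hilbert-Schmidt by the first step, in particular compact. Fixing an intermediate order $m'\in(m,0)$, the $\stS^{m'}$-seminorms of $\rho-\rho_R=(1-\chi(\xi/R))\rho(\xi)$ tend to $0$ as $R\to\infty$ because this symbol vanishes for $|\xi|<R$ and carries the extra decay factor $(1+|\xi|)^{m-m'}\leq (1+R)^{m-m'}$ elsewhere. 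Applying the boundedness bound just obtained at order $m'$ yields $\norm{P_\rho-P_{\rho_R}}\to 0$, exhibiting $P_\rho$ as a norm limit of compact operators.

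The principal obstacle is proving that $q(\xi)=r(\xi)^{1/2}$ genuinely lies in $\stS^0(\R^n;\cA_\theta)$, i.e.\ that the joint $\delta^\alpha\partial_\xi^\beta$ estimates survive the nonlinear square-root operation. I would realize $q(\xi)$ via the Cauchy integral
\begin{equation*}
q(\xi) \,=\, \frac{1}{2\pi\ii}\oint_\Gamma \sqrt{\lambda}\,\bigl(\lambda-r(\xi)\bigr)^{-1}d\lambda
\end{equation*}
on a contour $\Gamma$ enclosing all the spectra of the $r(\xi)$ uniformly, which is possible since the uniform positivity of $r(\xi)$ (arranged by taking $M$ sufficiently large) provides a $\xi$-independent spectral gap. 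Differentiating under the integral, the resolvent identity expresses $\delta^\alpha\partial_\xi^\beta q(\xi)$ as a finite sum of products of uniformly bounded resolvents and lower derivatives $\delta^{\alpha'}\partial_\xi^{\beta'} r(\xi)$ with the expected $\xi$-decay inherited from $r\in \stS^0$. Closure of $\cA_\theta$ under holomorphic functional calculus (Section~\ref{sec:NCtori}) then delivers the required $\stS^0$-seminorm estimates on $q$.
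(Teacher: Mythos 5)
The paper gives no proof of this proposition---it is quoted from~\cite{HLP:Part2}---so there is no in-paper argument against which to compare. Your blind proof is correct and follows the classical Calder\'on--Vaillancourt/H\"ormander route: Hilbert--Schmidt summability for $m<-n/2$, a square-root bootstrap to climb to $m=0$, and truncation by compactly supported cutoffs together with continuity of $\sigma\mapsto P_\sigma$ at a fixed intermediate order for compactness. Two remarks. First, make explicit what your phrase ``standard adjoint corrections'' is covering: the bootstrap requires that the formal $\cH_\theta$-adjoint of $P_\rho$ be again a \psido\ $P_{\rho^\dagger}$ with $\rho^\dagger\in\stS^m(\R^n;\cA_\theta)$ and $\rho^\dagger-\rho^*\in\stS^{m-1}(\R^n;\cA_\theta)$. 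This holds in the calculus of~\cite{HLP:Part2}, but the present paper quotes only the composition formula (Proposition~\ref{prop:Composition.sharp-continuity-standard-symbol}); without the adjoint formula neither the rewriting $\|P_\rho u\|^2=\acou{P_\rho^*P_\rho u}{u}$ as a symbolic statement nor the positivity step $\acou{P_{q\sharp q}u}{u}=\|P_q u\|^2+\acou{Ru}{u}$ with $R\in\Psi^{-1}(\T^n_\theta)$ (which uses $q=q^*$ and hence $P_q^*-P_q\in\Psi^{-1}(\T^n_\theta)$) can be justified. Second, a small streamlining: you need the square-root extraction only once, at order $0$. For $\rho\in\stS^{-k}(\R^n;\cA_\theta)$ with $k>0$ the symbol of $P_\rho^*P_\rho$ already lies in $\stS^{-2k}(\R^n;\cA_\theta)$, so $\|P_\rho u\|^2=\acou{P_\sigma u}{u}\leq\|P_\sigma\|\,\|u\|^2$ doubles the negative order at each step and reaches $<-n/2$ in $\mathrm{O}(\log n)$ iterations with no further functional calculus. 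The contour-integral verification that $r(\xi)^{1/2}\in\stS^0(\R^n;\cA_\theta)$ is sound as written: the uniform spectral gap coming from $M>\sup_\xi\|\rho(\xi)\|^2$ lets you fix a $\xi$-independent contour away from $(-\infty,0]$, and differentiating under the integral via the resolvent identity, combined with closure of $\cA_\theta$ under holomorphic functional calculus, delivers the required $\stS^0$-seminorm estimates.
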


This result allows us to identify $\Psi^q(\T^n_\theta)$ with a subspace of $\cL(\cH_\theta)$ when $\Re q\leq 0$. 

\begin{proposition}[\cite{HLP:Part2}]\label{prop:continuity-weak-Schatten-class}
 Let $m<0$ and set $p=n|m|^{-1}$. 
\begin{enumerate}
 \item For every $\rho(\xi)\in \stS^{m}(\R^n;\cA_\theta)$ the operator $P_\rho$ is in $\scL^{p,\infty}$, i.e., this is an infinitesimal of order~$\leq \frac{1}{p}$. 
 
 \item This provides us with a continuous linear map $\stS^{m}(\R^n;\cA_\theta)\ni \rho(\xi)\rightarrow P_\rho\in \scL^{p,\infty}$.
 \end{enumerate} 
\end{proposition}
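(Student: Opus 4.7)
My approach is to reduce the general case to a single scalar-symbol operator. Consider the scalar classical symbol $\phi_s(\xi) := \langle \xi \rangle^s = (1+|\xi|^2)^{s/2}$, which lies in $\stS^s(\R^n) \subset \stS^s(\R^n; \cA_\theta)$ for every $s\in\R$. First I treat the special case $s = m$: by~(\ref{eq:toroidal.Prhou-equation}), $P_{\phi_m}$ is diagonal in the orthonormal basis $(U^k)_{k\in\Z^n}$ of $\cH_\theta$, with $P_{\phi_m} U^k = \langle k \rangle^m U^k$. Its characteristic values are thus the non-increasing rearrangement of the family $\{\langle k \rangle^m\}_{k\in\Z^n}$. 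The lattice-point count $\#\{k\in\Z^n: \langle k\rangle \leq R\} = O(R^n)$ then yields $\mu_j(P_{\phi_m}) = O(j^{m/n}) = O(j^{-1/p})$ with $p = n/|m|$, giving $P_{\phi_m} \in \scL^{p,\infty}$ with an explicit bound on $\|P_{\phi_m}\|_{p,\infty}$.

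To handle a general $\rho(\xi) \in \stS^m(\R^n; \cA_\theta)$, I exploit that scalar $\xi$-only symbols are $\delta$-invariant, so in the asymptotic expansion of Proposition~\ref{prop:Composition.sharp-continuity-standard-symbol}(1) only the leading term survives: $\phi_{-m}\sharp\phi_m = \phi_{-m}\cdot \phi_m = 1$ as an honest identity of symbols. By Proposition~\ref{prop:Composition.sharp-continuity-standard-symbol}(2) this entails $P_{\phi_{-m}}P_{\phi_m} = P_1 = \op{id}_{\cA_\theta}$, and therefore
\begin{equation*}
 P_\rho \;=\; (P_\rho P_{\phi_{-m}})\,P_{\phi_m} \;=\; P_{\rho\sharp\phi_{-m}}\,P_{\phi_m}.
\end{equation*}
By Proposition~\ref{prop:Composition.sharp-continuity-standard-symbol}(1), $\rho\sharp\phi_{-m}\in \stS^0(\R^n;\cA_\theta)$, so Proposition~\ref{prop:PsiDOs.boundedness} ensures that $P_{\rho\sharp\phi_{-m}}$ extends to a bounded operator on $\cH_\theta$. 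The ideal inequality~(\ref{eq:Quantized.properties-mun2}) for $\scL^{p,\infty}$ then gives
\begin{equation*}
 \|P_\rho\|_{p,\infty} \;\leq\; \|P_{\rho\sharp\phi_{-m}}\|_{\cL(\cH_\theta)}\cdot \|P_{\phi_m}\|_{p,\infty},
\end{equation*}
which proves part~(1).

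For part~(2), continuity of $\rho \mapsto P_\rho$ into $\scL^{p,\infty}$ reduces via this factorization to two continuity statements: first, continuity of $\rho\mapsto \rho\sharp\phi_{-m}$ from $\stS^m(\R^n;\cA_\theta)$ to $\stS^0(\R^n;\cA_\theta)$, which I expect to follow routinely from the integral formula~(\ref{eq:Composition.symbol-sharp}) and the argument behind Proposition~\ref{prop:Composition.sharp-continuity-standard-symbol}; and second, continuity of $\sigma\mapsto P_\sigma$ from $\stS^0(\R^n;\cA_\theta)$ into $\cL(\cH_\theta)$. The main obstacle is this last step: Proposition~\ref{prop:PsiDOs.boundedness} is stated pointwise, so one needs to inspect its proof (a Calder\'on--Vaillancourt-type estimate carried out in~\cite{HLP:Part2}) to confirm that $\|P_\sigma\|_{\cL(\cH_\theta)}$ is in fact dominated by finitely many of the Fr\'echet seminorms defining $\stS^0(\R^n;\cA_\theta)$. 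Granted this, left multiplication by the fixed element $P_{\phi_m}\in \scL^{p,\infty}$ is continuous from $\cL(\cH_\theta)$ into $\scL^{p,\infty}$, and combining the three continuity statements completes the proof of part~(2).
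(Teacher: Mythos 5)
Your strategy — factor $P_\rho$ as a bounded operator times a fixed scalar-symbol operator that is diagonal in the basis $(U^k)_{k\in\Z^n}$, and estimate the singular values of the latter by lattice-point counting — is essentially the approach the paper delegates to \cite{HLP:Part2}: the remark immediately following the Proposition indicates that the argument there reduces everything to the membership of $\Delta^{m/2}$ in the ideal, i.e.\ to the same comparison operator (your $P_{\phi_m}=(1+\Delta)^{m/2}$, which has the minor advantage of avoiding the kernel of $\Delta$).

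Two points of rigor deserve attention. First, the asymptotic expansion of Proposition~\ref{prop:Composition.sharp-continuity-standard-symbol}(1) only gives $\phi_{-m}\sharp\phi_m - 1 \in \cS(\R^n;\cA_\theta)$, not the exact identity $\phi_{-m}\sharp\phi_m = 1$ you assert; the cleanest way to get the honest identity $P_{\phi_{-m}}P_{\phi_m}=\op{id}$ is simply to read it off from~(\ref{eq:toroidal.Prhou-equation}), since both operators are diagonal with $P_{\phi_{\pm m}}U^k=\langle k\rangle^{\pm m}U^k$. (Alternatively, one can note that $\alpha_{-t}[\phi_m(\xi)]=\phi_m(\xi)$ makes the oscillating integral in~(\ref{eq:Composition.symbol-sharp}) collapse exactly to the pointwise product.) Second, and for the same reason, since $\phi_{-m}$ is scalar-valued and $\xi$-only, $\rho\sharp\phi_{-m}(\xi)$ is \emph{exactly} $\rho(\xi)\phi_{-m}(\xi)$, not just asymptotically so; this makes your continuity step $\rho\mapsto\rho\sharp\phi_{-m}$ from $\stS^m$ to $\stS^0$ an immediate consequence of the Leibniz rule, rather than something to extract from the $\sharp$-product machinery. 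That leaves precisely what you flag as the main obstacle, namely the Calder\'on--Vaillancourt-type quantitative bound $\|P_\sigma\|_{\cL(\cH_\theta)}\lesssim$ (finitely many seminorms of $\sigma$ in $\stS^0$), which does have to be read off from the estimates in~\cite{HLP:Part2}; with that in hand the factorization $P_\rho = P_{\rho\phi_{-m}}\,P_{\phi_m}$ and the ideal inequality close the argument as you describe.
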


\begin{remark}
 As stated above Proposition~\ref{prop:continuity-weak-Schatten-class} is part of the contents of~\cite[Proposition~13.8]{HLP:Part2} for $p>1$ only. However, as pointed out in~\cite[Remark~13.12]{HLP:Part2}, the result holds \emph{verbatim} for any quasi-Banach ideal $\scI$ such that $\Delta^{\frac{m}2}\in \scI$, where $\Delta=\delta_1^2+\cdots +\delta_n^2$ is the flat Laplacian. As $\mu_k(\Delta^{\frac{m}2})=\op{O}(k^{\frac{m}{2}})=\op{O}(k^{-\frac1{p}})$, the result holds for $\scL^{p,\infty}$ with $p\leq 1$. 
\end{remark}

\begin{corollary}
 Every operator $P\in \Psi^{q}(\T^n_\theta)$ with $\Re q<-n$ is trace-class. 
\end{corollary}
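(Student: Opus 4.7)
The plan is to reduce the statement to an application of Proposition~\ref{prop:continuity-weak-Schatten-class}, combined with the elementary observation that $\scL^{p,\infty}\subset \scL^1$ whenever $p<1$.

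First, given $P\in \Psi^q(\T^n_\theta)$ with $\Re q<-n$, I pick a symbol $\rho(\xi)\in S^q(\R^n;\cA_\theta)$ such that $P=P_\rho$. By Remark~\ref{rmk:Symbols.classical-inclusion} we have the inclusion $S^q(\R^n;\cA_\theta)\subset \stS^{\Re q}(\R^n;\cA_\theta)$, so $\rho(\xi)$ is in particular a standard symbol of order $m:=\Re q$. Since $m<-n$, the exponent $p:=n|m|^{-1}$ satisfies $p<1$.

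Next, I invoke Proposition~\ref{prop:continuity-weak-Schatten-class} (whose validity for $p\leq 1$ is guaranteed by the remark following it): it gives $P=P_\rho\in \scL^{p,\infty}$, i.e., $\mu_k(P)=\op{O}(k^{-1/p})$. As $1/p>1$, the series $\sum_{k\geq 0}\mu_k(P)$ is dominated by a convergent $p$-series, so $\sum \mu_k(P)<\infty$, which exactly means that $P\in \scL^1$, i.e., $P$ is trace-class.

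There is no real obstacle here; the corollary is an immediate consequence of Proposition~\ref{prop:continuity-weak-Schatten-class} together with the standard inclusion $\scL^{p,\infty}\subset \scL^1$ for $p<1$. The only minor point worth flagging is that one must apply the proposition in the regime $p\leq 1$ (rather than the $p>1$ case explicitly written in~\cite{HLP:Part2}), which is exactly the extension pointed out in the remark just above the statement.
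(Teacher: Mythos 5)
Your proof is correct and matches the paper's intended (unwritten) argument: apply Proposition~\ref{prop:continuity-weak-Schatten-class} with $m=\Re q<-n$ (so $p=n|m|^{-1}<1$, using the extension to $p\leq 1$ from the preceding remark) and then note $\scL^{p,\infty}\subset\scL^1$ for $p<1$. Nothing to add.
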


\begin{corollary}
 Every operator $P\in \Psi^{-n}(\T^n_\theta)$ is in the weak trace-class $\scL^{1,\infty}$. 
\end{corollary}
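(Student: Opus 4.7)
The plan is to deduce this corollary directly from Proposition~\ref{prop:continuity-weak-Schatten-class} by applying it with $m=-n$. The key observation is that every operator in $\Psi^{-n}(\T^n_\theta)$ arises from a classical symbol in $S^{-n}(\R^n;\cA_\theta)$, and Remark~\ref{rmk:Symbols.classical-inclusion} ensures the inclusion $S^{-n}(\R^n;\cA_\theta)\subset \stS^{-n}(\R^n;\cA_\theta)$, so such an operator is in particular associated to a standard symbol of order $-n$.

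First, given $P\in \Psi^{-n}(\T^n_\theta)$, I would write $P=P_\rho$ for some $\rho(\xi)\in S^{-n}(\R^n;\cA_\theta)$ and invoke the inclusion above to view $\rho(\xi)$ as lying in $\stS^{-n}(\R^n;\cA_\theta)$. Then I would apply Proposition~\ref{prop:continuity-weak-Schatten-class} with the parameter $m=-n$. With this choice the exponent $p=n|m|^{-1}$ collapses to $p=1$, so the proposition directly yields $P_\rho\in \scL^{1,\infty}$.

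There is essentially no obstacle here: the statement is a boundary case $p=1$ of the preceding proposition, and the only substantive point is that the proposition was stated (in the form of~\cite[Proposition~13.8]{HLP:Part2}) for $p>1$, but the quoted Remark~\ref{rmk:Symbols.boundedness-remark}-style observation (i.e.\ \cite[Remark~13.12]{HLP:Part2}) guarantees that the argument works \emph{verbatim} for any quasi-Banach ideal $\scI$ containing $\Delta^{m/2}$. Since $\mu_k(\Delta^{-n/2})=\op{O}(k^{-1})$, one has $\Delta^{-n/2}\in \scL^{1,\infty}$, and the general statement applies with $p=1$. Thus the corollary is a one-line consequence of Proposition~\ref{prop:continuity-weak-Schatten-class}, and no additional argument beyond identifying the exponent is required.
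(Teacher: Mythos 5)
Your proof is correct and is exactly the route the paper intends: the corollary is an immediate consequence of Proposition~\ref{prop:continuity-weak-Schatten-class} applied with $m=-n$ (so $p=1$), together with the inclusion $S^{-n}(\R^n;\cA_\theta)\subset\stS^{-n}(\R^n;\cA_\theta)$ and the remark following the proposition that extends the $p>1$ case of~\cite[Proposition~13.8]{HLP:Part2} to $p\leq 1$. (Minor note: the remark you invoke is unlabeled in the paper, so your cross-reference \texttt{rmk:Symbols.boundedness-remark} is invented, but you describe its content accurately.)
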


\section{Noncommutative Residue}\label{sec:NCR}  
In this section, we survey the construction of the noncommutative residue trace for \psidos\ on NC tori. 

The noncommutative residue is defined on $\Psi^{\Z}(\T_\theta^n):=\bigcup_{q\in \Z}\Psi^q(\T_\theta^n)$.  Note that 
$\Psi^{\Z}(\T_\theta^n)$ is a sub-algebra of $\scL(\cA_\theta)$, and so by a trace we shall mean any linear functional that vanishes on commutators $[P,Q]$, $P,Q\in \Psi^\Z(\T^n_\theta)$. 

\begin{definition}
 The \emph{noncommutative residue} is the linear functional $\Res :\Psi^\Z(\T^n_\theta)\rightarrow \C$ given by
\begin{equation}
 \Res(P) = \int_{\bS^{n-1}} \tau\left[ \rho_{-n}(\xi)\right] d^{n-1}\xi, \qquad P\in \Psi^{\Z}(\cA_\theta),
 \label{eq:NCR.NCR}
\end{equation}
where $\rho_{-n}(\xi)$ is the symbol of degree~$-n$ of $P$. 
\end{definition}

\begin{remark}
 If $P\in \Psi^\Z(\T_\theta^n)$, then the homogeneous symbol $\rho_{-n}(\xi)$ is uniquely determined by $P$ 
 (\emph{cf}.~Remark~\ref{rem:PsiDOs.symbol-uniqueness}). We make the convention that $\rho_{-n}(\xi)=0$ when $P$ has order~$<-n$. 
\end{remark}

\begin{remark}
 It is immediate from the above definition that $\Res(P)=0$ when its symbol of degree~$-n$ is zero. In particular, the noncommutative residue vanishes on the following classes of operators:
\begin{enumerate}
 \item[(i)] \psidos\ of order~$<-n$, including smoothing operators. 
 
 \item[(ii)] Differential operators (since for such operators the symbols are polynomials without any homogeneous component of negative degree). 
\end{enumerate}
\end{remark}

\begin{remark}
In dimension $n=2$ (resp., $n=4$) we recover the noncommutative residue of~\cite{FW:JPDOA11} (resp., \cite{FK:JNCG15}). 
\end{remark}

\begin{remark}
 Under the equivalence between our classes of \psidos\ and the classes of toroidal \psidos\ (see~\cite{HLP:Part1}) the above noncommutative residue agrees with the noncommutative residue for toroidal \psidos\ introduced in~\cite{LNP:TAMS16}. 
\end{remark}

\begin{proposition}[\cite{FK:JNCG15, FW:JPDOA11, LNP:TAMS16}]\label{Prop2} Let $P\in \Psi^q(\cA_\theta)$ and $Q\in \Psi^{q'}(\cA_\theta)$ be such that $q+q'\in \Z$. Then 
\begin{equation*}
 \Res(PQ)= \Res(QP). 
\end{equation*}
 In particular, the noncommutative residue $\Res$ is a trace on the algebra $\Psi^{\Z}(\cA_\theta)$. 
\end{proposition}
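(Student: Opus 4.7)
First I would reduce to showing that $\Res([P,Q])=0$ whenever $[P,Q]\in\Psi^\Z(\T^n_\theta)$, since this is what the "$\Res(PQ)=\Res(QP)$" assertion is equivalent to. Since $\Res$ vanishes on operators of order $<-n$ (their homogeneous symbol of degree $-n$ is zero by convention), I may assume $N:=q+q'+n\in\N_0$; otherwise both sides vanish.

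Next, apply the composition formula of Proposition~\ref{prop:Composition.sharp-continuity-standard-symbol}. Writing asymptotic expansions $\rho_P\sim\sum p_{q-j}$ and $\rho_Q\sim\sum q_{q'-k}$ with $p_{q-j}$ homogeneous of degree $q-j$ and $q_{q'-k}$ homogeneous of degree $q'-k$, the homogeneous component of degree $-n$ of the symbol of $[P,Q]$ is
\begin{equation*}
\sigma_{-n}([P,Q]) = \sum_{|\alpha|+j+k=N}\frac{1}{\alpha!}\Big(\partial_\xi^\alpha p_{q-j}\cdot\delta^\alpha q_{q'-k}-\partial_\xi^\alpha q_{q'-k}\cdot\delta^\alpha p_{q-j}\Big).
\end{equation*}
I would then split this sum according to $|\alpha|=0$ and $|\alpha|\geq 1$.

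The $|\alpha|=0$ contribution is $\sum_{j+k=N}[p_{q-j},q_{q'-k}]$, a sum of commutators in $\cA_\theta$. Applying $\tau$ kills this part by traciality of $\tau$ on $\cA_\theta$. The crux of the argument is therefore to show that
\begin{equation*}
\int_{\bS^{n-1}}\tau\Big[\sum_{|\alpha|\geq 1,\,j+k+|\alpha|=N}\tfrac{1}{\alpha!}\big(\partial_\xi^\alpha p_{q-j}\cdot\delta^\alpha q_{q'-k}-\partial_\xi^\alpha q_{q'-k}\cdot\delta^\alpha p_{q-j}\big)\Big]\,d^{n-1}\xi=0.
\end{equation*}
To handle this, I would exploit two complementary "integration-by-parts" identities: (i) $\tau\circ\delta_l=0$ for every $l$, which follows from the invariance of $\tau$ under the action $\alpha_s$ (equivalently, from traciality of $\tau$ together with $\delta_l$ being an inner-like derivation on Fourier modes); (ii) for any symbol $f(\xi)$ homogeneous of degree $-n+1$ on $\R^n\setminus 0$, the Euler identity combined with Stokes' theorem on $\bS^{n-1}$ gives $\int_{\bS^{n-1}}\partial_{\xi_l}f\,d^{n-1}\xi=0$. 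Using (i) to move $\delta_l$'s across the two factors and traciality of $\tau$ on $\cA_\theta$ to swap the factors, I would show that after these manipulations the integrand differs from a pure $\xi$-divergence of symbols homogeneous of degree $-n+1$ only by traces of commutators, which integrate to zero by (ii) and by traciality of $\tau$ respectively.

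The main technical obstacle I anticipate is organizing the combinatorics of the $|\alpha|\geq 1$ terms: one must pair each term from $PQ$ with the corresponding term from $QP$ and check that the mismatch produced by moving $\delta^\alpha$ from one factor to the other (via $\tau\circ\delta_l=0$) and by swapping factors (via traciality) rearranges into $\sum_l\partial_{\xi_l}(\cdots)$ modulo $\op{Span}\{[a,b]:a,b\in\cA_\theta\}$. This is the NC-torus analogue of the Guillemin--Wodzicki calculation on manifolds, with $\int_M$ replaced by $\tau$ and $\partial_{x_j}$ by $\delta_j$; both share the key vanishing property under integration-by-parts, so the classical combinatorial argument carries over. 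Once the trace property is established for $P,Q\in\Psi^\Z(\T^n_\theta)$ (where $q+q'\in\Z$ is automatic), linearity gives that $\Res$ is a trace on $\Psi^\Z(\T^n_\theta)$.
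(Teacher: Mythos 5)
The paper does not actually prove this proposition --- it is stated with a citation to \cite{FK:JNCG15, FW:JPDOA11, LNP:TAMS16} and no in-text argument --- so there is no ``paper's own proof'' to compare against. Your plan is the standard Guillemin--Wodzicki argument transplanted to the NC torus, which is the approach those references take. All the ingredients you invoke are correct: the composition expansion from Proposition~\ref{prop:Composition.sharp-continuity-standard-symbol}; the fact $\tau\circ\delta_l=0$ (since $\delta_l(U^k)=k_lU^k$ and $\tau(U^k)=0$ for $k\neq0$); traciality $\tau([a,b])=0$; and the Stokes-type lemma $\int_{\bS^{n-1}}\partial_{\xi_l}f\,d^{n-1}\xi=0$ for $f$ homogeneous of degree $1-n$. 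The degree count also works: with $N=q+q'+n$, each summand of the degree-$(-n)$ symbol of $[P,Q]$ is homogeneous of degree $(q-j-|\alpha|)+(q'-k)=-n$, and after peeling off one $\partial_{\xi_l}$ the bracketed symbol sits in degree $1-n$, exactly where the Stokes lemma applies.

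The step you correctly flag as the real work --- organizing the $|\alpha|\geq1$ terms --- deserves to be made concrete. The elementary move is, for $\alpha=\beta+e_l$, the ``rotation''
\begin{equation*}
\int_{\bS^{n-1}}\tau\big(\partial_{\xi_l}\partial_\xi^\beta p\cdot\delta_l\delta^\beta q\big)\,d^{n-1}\xi
=\int_{\bS^{n-1}}\tau\big(\partial_{\xi_l}\delta^\beta q\cdot\delta_l\partial_\xi^\beta p\big)\,d^{n-1}\xi,
\end{equation*}
obtained by first passing $\partial_{\xi_l}$ through the sphere integral (Stokes), then passing $\delta_l$ through $\tau$ (using $\tau\circ\delta_l=0$), and finally swapping the two factors via traciality of $\tau$. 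Iterating this $|\alpha|$ times, keeping track of the $1/\alpha!$ weights, yields $\int\tau(\partial_\xi^\alpha p\cdot\delta^\alpha q)=\int\tau(\partial_\xi^\alpha q\cdot\delta^\alpha p)$, which is precisely what kills the $|\alpha|\ge1$ contribution. Your plan is sound; writing out this iteration carefully would turn it into a complete proof and is the content of the cited references.
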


By a well-known result of Wodzicki~\cite{Wo:HDR}, on connected closed manifolds of dimension $n \geq 2$ the noncommutative residue is the unique trace up to constant multiple. The following is the analogue of this result for NC tori (see also~\cite{FW:JPDOA11, FK:JNCG15, LNP:TAMS16}). 

\begin{proposition}[\cite{Po:TracesNCT}]
 Every trace on $\Psi^\Z(\T^n_\theta)$ is a constant multiple of the noncommutative residue. 
\end{proposition}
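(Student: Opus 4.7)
The plan is to follow Wodzicki's strategy and establish the stronger assertion $\ker \Res = \Com(\Psi^\Z(\T^n_\theta))$, the inclusion $\Com \subseteq \ker \Res$ being immediate from the trace property of the noncommutative residue. Granting this equality, the proposition follows at once: given any trace $\varphi$, choose $P_0 \in \Psi^{-n}(\T^n_\theta)$ with $\Res(P_0) = 1$ (easily constructed from a principal symbol of nontrivial integral over $\bS^{n-1}$) and set $c := \varphi(P_0)$; then $\varphi - c\Res$ is a trace vanishing on $P_0$ and on $\ker \Res = \Com$, so it vanishes on all of $\Psi^\Z(\T^n_\theta) = \C P_0 + \Com$, giving $\varphi = c \Res$.

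Two symbol identities, both obtained by applying the composition formula of Proposition~\ref{prop:Composition.sharp-continuity-standard-symbol}, drive the proof that every $P_\rho \in \ker \Res$ is a commutator. First, since $\delta_j = P_{\xi_j}$ and $\delta^\alpha \xi_j = 0$ for $|\alpha|\geq 1$, a direct computation gives
\begin{equation*}
 [\delta_j, P_\rho] = P_{\delta_j \rho}.
\end{equation*}
Second, the algebraic identity $U_j^{-1}[U_j, P_\rho] = -[U_j^{-1}, P_\rho U_j]$ shows that its left-hand side lies in $\Com$, and combining it with $U_j \sharp \rho = U_j\rho$ and $\rho \sharp U_j \sim \rho(\xi + e_j) U_j$ (asymptotically) yields, modulo smoothing,
\begin{equation*}
 U_j^{-1}[U_j, P_\rho] \equiv P_{U_j^{-1}(\rho(\xi) - \rho(\xi+e_j)) U_j},
\end{equation*}
which for a scalar symbol $\sigma$ simplifies to $P_{\sigma(\xi) - \sigma(\xi + e_j)}$, a \psido\ of order one less than $\sigma$ whose leading homogeneous symbol is $-\partial_{\xi_j}$ of the leading homogeneous symbol of $\sigma$.

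Using the first identity, the non-scalar part of $\rho$ is eliminated in finitely many steps. In the Fourier expansion $\rho = \sum_{k\in\Z^n} \rho^{(k)}(\xi) U^k$ with scalar-valued $\rho^{(k)}$, partition $\Z^n\setminus\{0\} = \bigsqcup_{j=1}^n A_j$ with $A_j := \{k : k_1 = \cdots = k_{j-1} = 0, \, k_j \neq 0\}$. The symbols $\tilde\rho_j(\xi) := \sum_{k\in A_j} k_j^{-1}\rho^{(k)}(\xi) U^k$ are well-defined (since $|k_j|\geq 1$ on $A_j$) and satisfy $\delta_j\tilde\rho_j = \sum_{k\in A_j}\rho^{(k)}(\xi)U^k$, yielding $P_\rho - P_{\rho^{(0)}} = \sum_{j=1}^n [\delta_j, P_{\tilde\rho_j}] \in \Com$, where $\rho^{(0)}(\xi) = \tau[\rho(\xi)]$ is the scalar part. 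For the remaining scalar case, decompose $\rho^{(0)} \sim \sum_m \sigma_m$ into homogeneous components of degree $m$. When $m\neq -n$, Euler's identity $\sigma_m = (m+n)^{-1}\sum_\ell \partial_{\xi_\ell}(\xi_\ell \sigma_m)$ expresses $\sigma_m$ as a sum of $\xi$-derivatives, while when $m=-n$, the condition $\Res(P) = 0$ reads $\int_{\bS^{n-1}} \sigma_{-n} d^{n-1}\xi = 0$, so a standard de Rham-cohomology argument on $\R^n\setminus 0$ yields $\sigma_{-n} = \sum_\ell \partial_{\xi_\ell}\tau_\ell$ with $\tau_\ell$ homogeneous of degree $1-n$. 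Applying the second identity to each $\partial_{\xi_\ell}$-term modifies $P_{\sigma_m}$ by a commutator plus a \psido\ of order $\leq m-2$; iterating over all $m$ and collecting the resulting commutators via Borel's asymptotic summation reduces $P_{\rho^{(0)}}$ to a smoothing remainder modulo $\Com$.

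The final step, which I expect to be the main technical obstacle, is to show that every smoothing operator $R\in\Psi^{-\infty}(\T^n_\theta)$ lies in $\Com(\Psi^\Z(\T^n_\theta))$. The Fourier-mode reduction of the preceding paragraph applies verbatim within the Schwartz class, so it suffices to handle ``diagonal'' smoothing operators $P_{\rho_0}$ with scalar symbol $\rho_0\in\cS(\R^n)$, acting as multipliers $U^k\mapsto \rho_0(k) U^k$. Since there is no ``lower order'' to iterate toward and the identity $[\delta_j, P_{\rho_0}] = 0$ holds trivially for scalar $\rho_0$, one needs a structural argument: for instance, an abstract description of the commutator subspace of the nuclear Fr\'echet ideal $\Psi^{-\infty}(\T^n_\theta)$ in the spirit of Fack--de la Harpe, or an explicit commutator representation obtained by iteratively applying the $U_j^{-1}[U_j, \cdot]$ trick to a ``primitive'' symbol whose discrete differences recover $\rho_0$. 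This step parallels the analogous delicate smoothing argument in Wodzicki's classical uniqueness theorem on closed manifolds.
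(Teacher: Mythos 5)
Your overall strategy — reducing to the assertion $\ker\Res = \Com\!\big(\Psi^\Z(\T^n_\theta)\big)$, then eliminating the $\cA_\theta$-valued part of the symbol via commutators with the $\delta_j$'s, then handling scalar homogeneous symbols via Euler's identity and the de Rham argument on $\R^n\setminus 0$ together with conjugation by the $U_j$'s — is exactly the Wodzicki-style route that the cited reference \cite{Po:TracesNCT} follows, and the paper itself surfaces the first key ingredient as Proposition~\ref{prop:derivatives.sums} (reduction to the scalar part $\tau[\rho(\xi)]$ via $\sum_j\delta_j\rho_j$, with the explicit choice $\rho_j=\delta_j\Delta^{-1}\rho$). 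Your partition of $\Z^n\setminus 0$ by first-nonzero coordinate is a perfectly serviceable variant of the same device, and your computation of $U_j^{-1}[U_j,P_\rho]=P_{\rho(\xi)-U_j^{-1}\rho(\xi+e_j)U_j}$ modulo smoothing is correct. So the symbol-level reductions, and the final linear-algebra step $\Psi^\Z=\C P_0 + \Com$, are sound.

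The gap is exactly where you flag it: you do not establish $\Psi^{-\infty}(\T^n_\theta)\subset\Com\!\big(\Psi^\Z(\T^n_\theta)\big)$, and neither of the two routes you sketch closes it as stated. The Fack--de la Harpe type description would characterize the commutator subspace \emph{of the ideal} $\Psi^{-\infty}$, where the ordinary operator trace $\Tr$ is a trace; but the rank-one projection $e_0$ onto $\C\cdot 1$ has $\Tr(e_0)=1\neq 0$ and hence cannot lie in $\Com(\Psi^{-\infty})$, so that route at best reduces the problem to showing $e_0\in\Com(\Psi^\Z)$ — which is precisely the hard part, since one must use commutators against genuinely non-smoothing elements of $\Psi^\Z$ (for instance the $\delta_j$ or $U_j$) to escape the constraint imposed by $\Tr$. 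Your second suggestion — build a primitive $q$ for the discrete difference $D_j q(\xi):=q(\xi)-q(\xi+e_j)=\rho_0(\xi)$ and apply the $U_j^{-1}[U_j,\cdot]$ trick — runs into the obstruction that when $\sum_{k\in\Z^n}\rho_0(k)\neq 0$ the natural primitive $q(\xi_1,\xi')=\sum_{m\geq 0}\rho_0(\xi_1+m,\xi')$ is \emph{not} a classical symbol: its directional limits at infinity are incompatible with a continuous degree-$0$ homogeneous leading term, so $P_q\notin\Psi^\Z(\T^n_\theta)$. The nonvanishing total mass $\sum_k\rho_0(k)$ is exactly what prevents both of your suggested fixes from going through, and an additional idea (e.g.\ an isometry/shift trick producing a projection as a commutator inside $\Psi^\Z$, as in the closed-manifold version of Wodzicki's theorem, or an explicit operator-level construction tailored to the NC torus) is needed. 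As it stands this is a genuine missing step, not a routine verification.
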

 
 We also observe that the formula~(\ref{eq:NCR.NCR}) can be rewritten in the form, 
\begin{equation}
 \Res(P) = \tau\big[ c_P\big], \qquad \text{where}\ c_P:=\int_{\bS^{n-1}} \rho_{-n}(\xi)d^{n-1}\xi. 
 \label{eq:NCR.cP}
\end{equation}
Note that if $P\in \Psi^m(\T^n_\theta)$, $m\in \Z$, has symbol $\rho(\xi) \sim \sum \rho_{m-j}(\xi)$, then, for every $a\in \cA_\theta$, the operator $aP$ has symbol $a\rho(\xi)\sim \sum a\rho_{m-j}(\xi)$. In particular, its symbol of degree~$-n$ is $a\rho_{-n}(\xi)$, and so we have $c_{aP}=\int_{\bS^{n-1}} a\rho_{-n}(\xi)d^{n-1}\xi= ac_{P}$. Thus, for all $a\in \cA_\theta$, we have 
\begin{equation}
 \Res(aP)= \tau\big[ c_{aP}\big]=  \tau\big[ ac_P\big].
 \label{eq:NCR.local}
\end{equation}
This shows that the noncommutative residue is a local functional.

\section{Derivatives and Sums of Commutators}\label{sec:deltaj-derivatives}  
In this section, we establish that any \psido\ agrees with a scalar-symbol \psido\ modulo the closure of the weak trace class commutator subspace. 
The approach is based on the following observation. 

\begin{proposition}[\cite{Po:TracesNCT}]\label{prop:derivatives.sums} 
 Let $\rho(\xi)\in \stS^{m}(\R^n;\cA_\theta)$, $m\in \R$. Then there are symbols $\rho_1(\xi), \ldots, \rho_n(\xi)$ in $\stS^m(\R^n;\cA_\theta)$ such that
\begin{equation}
\rho(\xi) = \tau\big[ \rho(\xi)\big] + \delta_1\rho_1(\xi) + \cdots + \delta_n\rho_n(\xi).
\label{eq:derivatives.sums-derivatives} 
\end{equation}
\end{proposition}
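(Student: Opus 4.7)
The plan is to exploit the Fourier decomposition on $\cA_\theta$ together with the identity $\delta_j(U^k) = k_j U^k$. In Fourier modes, $\tau$ extracts the $k = 0$ coefficient, while $\delta_j$ acts as multiplication by $k_j$. My strategy is to introduce two families of auxiliary operators on $\cA_\theta$, both of which are bounded and commute with every $\delta^\beta$ (and hence with $\partial_\xi^\beta$ when applied pointwise in $\xi$). The first are the partial conditional expectations
\begin{equation*}
P_j u := \frac{1}{2\pi}\int_0^{2\pi} \alpha_{te_j}(u)\,dt = \sum_{k:\,k_j = 0} u_k U^k, \qquad j=1,\ldots,n,
\end{equation*}
which are obviously contractions on $\cA_\theta$ and satisfy $P_1 \cdots P_n u = \tau[u]$. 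The second are the partial antiderivatives
\begin{equation*}
Q_j u := \sum_{k:\,k_j \neq 0} \frac{u_k}{k_j} U^k,
\end{equation*}
tuned so that $\delta_j Q_j = I - P_j$ by direct inspection on Fourier modes.

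Once these are in hand, I would use the telescoping identity
\begin{equation*}
I - P_1 P_2 \cdots P_n \;=\; \sum_{j=1}^n P_1 \cdots P_{j-1}\,(I - P_j),
\end{equation*}
with the convention $P_1\cdots P_{j-1}=I$ for $j=1$. Since every $P_i$ commutes with $\delta_j$ (as $\alpha_{te_i}$ commutes with $\alpha_{se_j}$) and $I - P_j = \delta_j Q_j$, this gives
\begin{equation*}
u - \tau[u] \;=\; \sum_{j=1}^n \delta_j\bigl(P_1\cdots P_{j-1}Q_j\,u\bigr), \qquad u\in\cA_\theta.
\end{equation*}
Applying this pointwise in $\xi$ to $\rho(\xi)$ and setting $\rho_j(\xi) := P_1\cdots P_{j-1} Q_j\,\rho(\xi)$ yields the identity \eqref{eq:derivatives.sums-derivatives}, provided one verifies that each $\rho_j(\xi)$ still lies in $\stS^m(\R^n;\cA_\theta)$. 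Because the operators $P_i$ and $Q_j$ are bounded on $\cA_\theta$ and commute with every $\delta^\beta$ and every $\partial_\xi^\beta$, the standard-symbol estimates for $\rho_j$ descend immediately from those for $\rho$. The scalar component $\tau[\rho(\xi)]$ is trivially in $\stS^m(\R^n)\subset \stS^m(\R^n;\cA_\theta)$.

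The sole nontrivial obstacle is the boundedness of $Q_j$: the Fourier multiplier $k_j^{-1}$ (extended by zero at $k_j=0$) resembles a one-dimensional Riesz transform, so its boundedness with respect to the $C^*$-norm is not automatic. The resolution is to represent $Q_j$ as integration of $\alpha_{te_j}(u)$ against a bounded kernel. Expanding $\phi(t):=(\pi-t)/(2\pi)$ on $(0,2\pi)$ in Fourier series gives $\hat{\phi}(k) = 1/(ik)$ for $k\neq 0$, whence
\begin{equation*}
Q_j u \;=\; -i\int_0^{2\pi}\phi(t)\,\alpha_{te_j}(u)\,dt.
\end{equation*}
Since $\phi\in L^\infty[0,2\pi]$ and $\alpha_{te_j}$ is isometric and strongly continuous on $\cA_\theta$, this yields $\|Q_j u\|\leq (\pi/2)\|u\|$ and makes the commutation relations automatic. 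With this representation in place, the rest of the argument is routine bookkeeping on the seminorms defining $\stS^m(\R^n;\cA_\theta)$.
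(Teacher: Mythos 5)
Your proof is correct, but it takes a genuinely different route from the paper's. The paper (following the reference to \cite{Po:TracesNCT}) takes the symmetric choice $\rho_j(\xi)=\delta_j\Delta^{-1}\rho(\xi)$, where $\Delta^{-1}$ is the partial inverse of the flat Laplacian $\Delta=\delta_1^2+\cdots+\delta_n^2$; the identity $\sum_j\delta_j\delta_j\Delta^{-1}=I-\tau$ is then immediate, and the work is in showing that the Fourier multiplier $k\mapsto k_j|k|^{-2}$ (a Riesz-potential--type multiplier) is bounded on the $C^*$-norm, which requires representing it as convolution against a kernel in $L^1(\T^n)$. You instead telescope $I-P_1\cdots P_n=\sum_j P_1\cdots P_{j-1}(I-P_j)$ with partial conditional expectations $P_j$ and one-dimensional antiderivatives $Q_j$ satisfying $\delta_jQ_j=I-P_j$, and you discharge boundedness of $Q_j$ via the explicit integral representation $Q_ju=-i\int_0^{2\pi}\phi(t)\alpha_{te_j}(u)\,dt$ with $\phi$ a bounded sawtooth on one period. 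This reduces the hard estimate to one-dimensional Fourier analysis and gives a clean explicit bound $\|Q_j\|\le\pi/2$, at the cost of an asymmetric formula for the $\rho_j$ (they depend on the ordering of the indices $1,\ldots,n$). Both approaches also need the (easy) observations that the auxiliary operators commute with $\delta^\alpha$ and act fiberwise in $\xi$, so the $\stS^m$-estimates for $\rho_j$ descend directly from those of $\rho$ --- you handle that correctly as well. Net: your proof is more elementary and self-contained; the paper's choice is more canonical and symmetric.
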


\begin{remark}
 For instance in~(\ref{eq:derivatives.sums-derivatives}) we may take $\rho_{j}(\xi)=\delta_j \Delta^{-1} \rho(\xi)$, where $\Delta^{-1}$ is the partial inverse of the (flat) Laplacian $\Delta := (\delta_1^2 +\cdots +\delta_n^2)$. Furthermore, if $\rho(\xi) \in S^m(\R^n;\cA_\theta)$, then the above symbols are in $S^m(\R^n;\cA_\theta)$ as well (see~\cite{Po:TracesNCT}). 
\end{remark}

It can be shown that  if $\rho(\xi)\in \stS^{m}(\R^n;\cA_\theta)$, then $P_{\delta_j\rho}=[\delta_j,P_\rho]$ (see~\cite{Po:TracesNCT}). When $m\leq -n$ this does not imply that the operators $P_{\delta_j \rho}$ lie in the commutator space $\Com(\cL^{1,\infty})$, because the derivatives $\delta_j$ are unbounded operators. Nevertheless, as the following result asserts, those operators are contained in the $\cL^{1,\infty}$-closure $\overline{\Com(\cL^{1,\infty})}$ of $\Com(\cL^{1,\infty})$.

\begin{proposition}\label{prop:derivatives.ComL1infty} 
 Let $\rho(\xi)\in \stS^{-n}(\R^n;\cA_\theta)$. 
\begin{enumerate}
 \item For $j=1,\ldots, n$, we have
\begin{equation*}
 P_{\delta_j\rho}  = \lim_{t \rightarrow 0} \frac{1}{t}\left( \alpha_{te_j}P_\rho  \alpha_{te_j}^{-1}-P_\rho \right) \quad \text{in $\cL^{1,\infty}$}.
\end{equation*}

\item The operators $P_{\delta_1 \rho}, \ldots, P_{\delta_n \rho}$ are contained in $\overline{\Com(\cL^{1,\infty})}$. 
\end{enumerate}
\end{proposition}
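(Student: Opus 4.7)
The strategy is to realize the action $\alpha_s$ on operators via the GNS unitaries $V_s:\cH_\theta\rightarrow\cH_\theta$ determined by $V_s(U^k)=e^{is\cdot k}U^k$, so that $V_s\pi(a)V_s^{-1}=\pi(\alpha_s(a))$ for $a\in A_\theta$. Using the formula~(\ref{eq:toroidal.Prhou-equation}) on Fourier basis vectors, a direct computation shows
\begin{equation*}
 V_s P_\rho V_s^{-1} U^l = V_s\bigl(e^{-is\cdot l}\rho(l)U^l\bigr)=\alpha_s(\rho(l))U^l=P_{\alpha_s\rho}\,U^l,
\end{equation*}
where $(\alpha_s\rho)(\xi):=\alpha_s(\rho(\xi))$. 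Thus conjugation by $V_s$ corresponds at the level of symbols to the application of $\alpha_s$ to the values of $\rho$. Since $\alpha_s$ commutes with $\delta^\beta$ and is isometric, $\alpha_s\rho\in\stS^{-n}(\R^n;\cA_\theta)$ with the same seminorms as $\rho$. This identification is the cornerstone for both statements.

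For part~(1), using the identification above we have
\begin{equation*}
 \frac{1}{t}\bigl(\alpha_{te_j}P_\rho\alpha_{te_j}^{-1}-P_\rho\bigr) = P_{\rho_{t}}, \qquad \rho_t(\xi):=\tfrac{1}{t}\bigl(\alpha_{te_j}\rho(\xi)-\rho(\xi)\bigr).
\end{equation*}
By the continuity part of Proposition~\ref{prop:continuity-weak-Schatten-class}, it suffices to prove that $\rho_t\to\delta_j\rho$ in $\stS^{-n}(\R^n;\cA_\theta)$ as $t\rightarrow 0$. Writing $\alpha_{te_j}\rho(\xi)-\rho(\xi)=\int_0^t\alpha_{re_j}[\delta_j\rho(\xi)]\,dr$, and then similarly $\alpha_{re_j}[\delta_j\rho(\xi)]-\delta_j\rho(\xi)=\int_0^r\alpha_{\sigma e_j}[\delta_j^2\rho(\xi)]\,d\sigma$, yields the Taylor remainder bound
\begin{equation*}
 \Bigl\|\delta^\alpha\partial_\xi^\beta\bigl[\rho_t(\xi)-\delta_j\rho(\xi)\bigr]\Bigr\|\leq \tfrac{t}{2}\,\bigl\|\delta_j^2\delta^\alpha\partial_\xi^\beta\rho(\xi)\bigr\|,
\end{equation*}
using that each $\alpha_{\sigma e_j}$ is a $*$-automorphism. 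As $\delta_j^2\delta^\alpha\partial_\xi^\beta\rho(\xi)$ satisfies the $\stS^{-n}$ bound of order $(1+|\xi|)^{-n-|\beta|}$, this gives convergence in every $\stS^{-n}$-seminorm, hence in $\stS^{-n}(\R^n;\cA_\theta)$, and then in $\cL^{1,\infty}$ by Proposition~\ref{prop:continuity-weak-Schatten-class}.

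For part~(2), the key algebraic identity is
\begin{equation*}
 \alpha_{te_j}P_\rho\alpha_{te_j}^{-1}-P_\rho = V_{te_j}P_\rho V_{te_j}^{-1}-P_\rho = \bigl[V_{te_j},\,P_\rho V_{te_j}^{-1}\bigr].
\end{equation*}
Since $P_\rho\in\cL^{1,\infty}$ by Proposition~\ref{prop:continuity-weak-Schatten-class} and $V_{te_j}^{-1}$ is bounded, $P_\rho V_{te_j}^{-1}\in\cL^{1,\infty}$; as $V_{te_j}\in\scL(\cH_\theta)$, the above commutator lies in $\Com(\cL^{1,\infty})$ for every $t\neq 0$. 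Dividing by $t$ and passing to the $\cL^{1,\infty}$-limit as $t\rightarrow 0$ using part~(1), we conclude that $P_{\delta_j\rho}\in\overline{\Com(\cL^{1,\infty})}$. The only real subtlety is the seminorm-level Taylor estimate in part~(1), which however is routine given that $\alpha_s$ acts isometrically and commutes with the derivations $\delta^\beta$; everything else is essentially formal.
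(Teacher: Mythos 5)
Your proposal is correct and follows essentially the same route as the paper's own proof: establish the operator identity $\alpha_s P_\rho\alpha_s^{-1}=P_{\alpha_s\rho}$ by evaluating on the Fourier basis $\{U^k\}$, prove $\rho_t\to\delta_j\rho$ in $\stS^{-n}(\R^n;\cA_\theta)$ via a Taylor remainder estimate, invoke the continuity into $\cL^{1,\infty}$ from Proposition~\ref{prop:continuity-weak-Schatten-class}, and finally observe that each difference quotient is a commutator $[\alpha_{te_j},P_\rho\alpha_{te_j}^{-1}]\in\Com(\cL^{1,\infty})$. The only cosmetic difference is that you introduce explicit GNS unitaries $V_s$ where the paper writes $\alpha_s$ for both the automorphism and the associated unitary, and you derive the Taylor remainder by an iterated fundamental-theorem-of-calculus argument rather than the one-line integral-remainder formula; both yield the same $\tfrac{t}{2}\|\delta_j^2\delta^\alpha\partial_\xi^\beta\rho(\xi)\|$ bound.
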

\begin{proof}
 Let $j\in \{1,\ldots, n\}$. Given any $\xi\in \R^n$, the map $\R\ni t \rightarrow \alpha_{te_j}[\rho(\xi)]\in \cA_\theta$ is smooth and  $\frac{d^\ell}{dt^\ell}\alpha_{te_j}[\rho(\xi)]= i^\ell \alpha_{t e_j}[\delta_j^\ell\rho(\xi)]$ for all $\ell\geq 1$. Thus, by  the Taylor formula for maps with values in locally convex spaces (see, e.g., \cite[Proposition~C.15]{HLP:Part1}), for all $t\in \R$, we have 
\begin{equation*}
 \alpha_{te_j}\big[\rho(\xi)\big] - \rho(\xi)-t\delta_j\rho(\xi)= -t^2\int_0^1(1-s)  \alpha_{se_j}\left[\delta_j^2\rho(\xi)\right]ds. 
\end{equation*}
For $t\neq 0$ and $\xi\in \R^n$ set $\rho_t(\xi)=t^{-1}(\alpha_{te_j}[\rho(\xi)] - \rho(\xi))$. We have
\begin{align*}
 \big\| \rho_t(\xi)-\delta_j\rho(\xi)\big\| & \leq t \int_0^1(1-s)  \left\|\alpha_{se_j}\left[\delta_j^2\rho(\xi)\right]\right\|ds
 \\
 & \leq t \int_0^1(1-s)  \left\|\delta_j^2\rho(\xi)\right\|ds\\ 
& \leq \frac{1}{2} t  \left\|\delta_j^2\rho(\xi)\right\|. 
\end{align*}
As $\rho(\xi) \in  \stS^{-n}(\R^n; \cA_\theta)$ there is $C>0$ such that $\left\|\delta_j^2\rho(\xi)\right\|\leq C(1+|\xi|)^m$ for all $\xi\in \R^n$. Thus, there is $C>0$ such that, for all $\xi\in \R^n$ and $t\in \R\setminus 0 $, we have 
\begin{equation*}
  \big\| \rho_t(\xi)-\delta_j\rho(\xi)\big\| \leq Ct(1+|\xi|)^{-n}. 
\end{equation*}

Likewise, given any multi-orders $\alpha$ and $\beta$, as $\delta^\alpha \partial_\xi^\beta \rho(\xi)\in \stS^{m-|\beta|}(\R^n; \cA_\theta)$, there is $C_{\alpha\beta}>0$ such that, for for all $\xi\in \R^n$ and $t\in \R\setminus 0 $, we have 
\begin{align*}
 \big\| \delta^\alpha \partial_\xi^\beta\left[\rho_t(\xi)-\delta_j\rho(\xi)\right]\big\| & =
  \bigg\| \frac1{t}\left(\alpha_{te_j}\big[\delta^\alpha \partial_\xi^\beta\rho(\xi)\big] - \delta^\alpha \partial_\xi^\beta\rho(\xi)\right)-\delta_j\delta^\alpha \partial_\xi^\beta\rho(\xi)\bigg\| \\
  & \leq Ct(1+|\xi|)^{-n-|\beta|}. 
\end{align*}
This implies that, for all multi-orders $\alpha$ and $\beta$, we have 
\begin{equation*}
\lim_{t\rightarrow 0} \sup_{\xi\in \R^n} (1+|\xi|)^{n+|\beta|} \big\| \delta^\alpha \partial_\xi^\beta\left[\rho_t(\xi)-\delta_j\rho(\xi)\right]\big\| =0. 
\end{equation*}
This shows that $\rho_t(\xi) \rightarrow \delta_j\rho(\xi)$ in $\stS^{-n}(\R^n;\cA_\theta)$ as $t\rightarrow 0$.  Combining this with 
Proposition~\ref{prop:continuity-weak-Schatten-class} shows that, in $\cL^{1,\infty}$, we have
\begin{equation*}
P_{\delta_j\rho} = \lim_{t \rightarrow 0} P_{\rho_t} = \lim_{t \rightarrow 0} \frac{1}{t}\left(P_{\alpha_{tej}(\rho)}-P_\rho \right).
\label{eq:derivatives.limit-conjugation-alpha}   
\end{equation*}

We claim that, for all $t\in \R$, we have
\begin{equation}
 P_{\alpha_s(\rho)} =\alpha_{te_j} P_{\rho} \alpha_{te_j}^{-1}.
 \label{eq:derivatives.conjugation-alpha} 
\end{equation}
To see this set $s=te_j$, and let $u= \sum_{k\in \Z^n} u_k U^k$ be in $\cA_\theta$. As $\alpha_{s}$ is a continuous automorphism of $\cA_\theta$ we have 
\begin{equation}
 (\alpha_s P_\rho \alpha_s^{-1})u= \sum_{k\in \Z^n} u_k (\alpha_s P_\rho \alpha_s^{-1})U^k.
 \label{eq:derivative.alphas-conjugation} 
\end{equation}
As $\alpha_s^{-1}(U^k)= \alpha_{-s}(U^k)= e^{-is\cdot k} U^k$ we get
\begin{equation*}
 P_\rho \left[\alpha_s^{-1}\big(U^k\big) \right]= e^{-is\cdot k} P_{\rho}(U^k)= e^{-is\cdot k} P_{\rho}(U^k) = e^{-is\cdot k} \rho(k)U^k= \rho(k)\alpha_s^{-1}\big(U^k\big). 
\end{equation*}
Thus,
\begin{equation*}
 \big( \alpha_s P_\rho \alpha_{s}^{-1}\big)\big(U^k\big)= \alpha_s\left[ \rho(k)\alpha_s^{-1}\big(U^k\big)\right] =  \alpha_s\left[ \rho(k)\right]U^k. 
\end{equation*}
Combining this with~(\ref{eq:derivative.alphas-conjugation}) gives
\begin{equation*}
 \big(\alpha_s P_\rho \alpha_s^{-1}\big)u= \sum_{k\in \Z^n} u_k \alpha_s\left[ \rho(k)\right]U^k=P_{\alpha_s(\rho)}u. 
\end{equation*}
This proves~(\ref{eq:derivatives.conjugation-alpha}). 

Combining~(\ref{eq:derivatives.limit-conjugation-alpha}) and~(\ref{eq:derivatives.conjugation-alpha}) proves the first part of the proposition. The 2nd part follows from the fact that 
$\alpha_{te_j}P_\rho  \alpha_{te_j}^{-1}-P_\rho= [\alpha_{te_j},P_\rho  \alpha_{te_j}^{-1}]\in \op{Com}(\cL^{1,\infty})$. The proof is complete.  
\end{proof}

\section{WeylÕs law for Scalar-Valued Symbols} \label{sec:Weyl} 
In this section, we establish a WeylÕs law for positive \psidos\ with scalar symbols. This will allows us to get a preliminary version of the trace theorem for \psidos\ with 
scalar-symbols. 

Given any homogeneous symbol $\rho(\xi)\in S_m(\R^n;\cA_\theta)$, $m\in \R$, we define the linear operator $P_\rho: \cA_\theta \rightarrow \cA_\theta$ by
\begin{equation*}
 P_\rho = \sum_{k\in \Z^n\setminus 0} u_k \rho(k) U^k, \qquad u=\sum u_kU^k\in \cA_\theta. 
\end{equation*}
In fact, $P_\rho=P_{\tilde{\rho}}$, with $\tilde{\rho}(\xi)=(1-\chi(\xi))\rho(\xi)\in S^m(\R^n;\cA_\theta)$, where $\chi(\xi)$ is any function in $C^\infty_c(\R^n)$ such that $\chi(\xi)=1$ near $\xi=0$ and $\chi(\xi)\geq 1$. In particular, we see that $P_\rho$ is an operator in $\Psi^{m}(\cA_\theta)$ and has $\rho(\xi)$ as principal symbol. 

Suppose now that $\rho(\xi)$ has scalar values, i.e., $\rho(\xi)\in S_m(\R^n)$. In this case, the operator $P_\rho$ is even \emph{diagonal} with respect to the orthonormal basis $(U^k)_{k\in \Z^n}$. Namely, 
\begin{equation*}
 P_\rho(1)=0, \qquad P_\rho(U^k)=\rho(k)U^k, \quad k\in \Z^n\setminus 0. 
\end{equation*}
Therefore, the operator is (formally) normal and its (non-zero) eigenvalues are given by the values of the symbol $\rho(\xi)$ on $\Z^n\setminus 0$. 

Assume further that $m>0$ and $\rho(\xi)>0$. This implies that $P_\rho$ is elliptic and $\rho(k)>0$ for all $k\in \Z^n\setminus 0$.  It is convenient to extend $\rho(\xi)$ by continuity to $\xi=0$ by setting $\rho(0)=0$. Then, the operator $P_\rho$ is a selfadjoint Fredholm operator with spectrum $\rho(\Z^n)$. Note also that the nullspace of $P_\rho$ is just $\C \cdot 1$. Therefore, we can list the eigenvalues of $P_\rho$ as a non-decreasing sequence, 
\begin{equation*}
 0=\lambda_0(P_\rho)<\lambda_1(P_\rho)\leq \lambda_2(P_\rho)\leq \cdots,
\end{equation*}
where each eigenvalue is repeated according to multiplicity. We also define the counting function, 
\begin{equation*}
 N(P_\rho;\lambda):= \#\left\{\ell\in \N_0;\ \lambda_\ell(P_\rho)\leq \lambda\right\}, \qquad \lambda\geq 0. 
\end{equation*}
Note that we have
\begin{equation}
  N(P_\rho;\lambda):= \#\left\{k\in \Z^n\setminus 0;\ \rho(k)\leq \lambda\right\}.
  \label{eq:Weyl.counting-symbol} 
\end{equation}

\begin{proposition}[WeylÕs law] \label{prop:Weyl1} 
Let $\rho(\xi)\in S_1(\R^n)$, $\rho(\xi)>0$.
\begin{enumerate}
 \item As $\lambda \rightarrow \infty$, we have
\begin{equation*}
 N(P_\rho;\lambda)=c(\rho)\lambda^n\left(1+\op{O}(\lambda^{-1})\right), \qquad c(\rho):= \frac{1}{n}\int_{\bS^{n-1}}\rho(\xi)^{-\frac1{n}}d^{n-1}\xi>0. 
\end{equation*}

\item As $\ell \rightarrow 0$, we have 
\begin{equation}
 \lambda_\ell(P_\rho)= \left(c(\rho)^{-1}\ell\right)^{\frac{1}{n}} + \op{O}(1). 
 \label{eq:Weyl.eigenvalues}
\end{equation}
\end{enumerate}
\end{proposition}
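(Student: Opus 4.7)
The plan is to reduce Weyl's law to a classical lattice-point counting estimate for the star-shaped body determined by $\rho$. Since $\rho(\xi)\in S_1(\R^n)$ is positive on $\Rno$, homogeneous of degree $1$, and smooth on $\Rno$, the ``unit ball''
\begin{equation*}
 B_\rho := \left\{\xi\in \R^n;\ \rho(\xi)\leq 1\right\}
\end{equation*}
is a compact star-shaped domain with smooth boundary $\{\rho(\xi)=1\}$. By homogeneity, $\{\rho(\xi)\leq \lambda\}=\lambda B_\rho$ for $\lambda>0$, and so formula~(\ref{eq:Weyl.counting-symbol}) gives
\begin{equation*}
 N(P_\rho;\lambda)= \#\!\left(\Z^n\cap \lambda B_\rho\right)-1 \qquad \text{for all $\lambda\geq 0$}.
\end{equation*}

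First, I would compute $\op{vol}(B_\rho)$ in polar coordinates $\xi=r\omega$, $r>0$, $\omega\in \bS^{n-1}$: using $\rho(\xi)=r\rho(\omega)$, the condition $\rho(\xi)\leq 1$ becomes $r\leq \rho(\omega)^{-1}$, so
\begin{equation*}
 \op{vol}(B_\rho) = \int_{\bS^{n-1}}\!\! \int_0^{\rho(\omega)^{-1}}\!\! r^{n-1}\, dr\, d^{n-1}\omega = \frac{1}{n}\int_{\bS^{n-1}}\rho(\omega)^{-n}d^{n-1}\omega = c(\rho).
\end{equation*}
Next, I would appeal to the standard lattice-point estimate for star-shaped bodies with smooth (or even just Lipschitz) boundary: covering $\R^n$ by unit cubes centered at the points of $\Z^n$, one separates cubes fully contained in $\lambda B_\rho$ from those meeting its boundary $\lambda \partial B_\rho$. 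The first set has $\op{vol}(\lambda B_\rho)+\op{O}(\lambda^{n-1})$ cubes, while the boundary layer, having $(n-1)$-dimensional Hausdorff measure $\op{O}(\lambda^{n-1})$ by smoothness of $\partial B_\rho$, contributes at most $\op{O}(\lambda^{n-1})$ extra cubes. This yields
\begin{equation*}
 \#\!\left(\Z^n\cap \lambda B_\rho\right)= c(\rho)\lambda^n + \op{O}(\lambda^{n-1}) \quad \text{as $\lambda\rightarrow \infty$},
\end{equation*}
which, after subtracting the contribution of $k=0$, gives part~(1).

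For part~(2), I would simply invert the asymptotics from part~(1). By definition $\lambda_\ell(P_\rho)$ is determined by $N(P_\rho;\lambda_\ell)\sim \ell$, and the bound $N(P_\rho;\lambda)=c(\rho)\lambda^n+\op{O}(\lambda^{n-1})$ gives $\ell = c(\rho)\lambda_\ell^n(1+\op{O}(\lambda_\ell^{-1}))$, from which $\lambda_\ell = (c(\rho)^{-1}\ell)^{1/n}(1+\op{O}(\ell^{-1/n}))^{1/n}=(c(\rho)^{-1}\ell)^{1/n}+\op{O}(1)$, as claimed in~(\ref{eq:Weyl.eigenvalues}). The main (and essentially only) nontrivial point is the lattice-point remainder $\op{O}(\lambda^{n-1})$, but this is a routine consequence of the smoothness of $\partial B_\rho$; no delicate arithmetic cancellation (of Hlawka-type) is needed here, since we only claim the elementary Lipschitz-boundary bound.
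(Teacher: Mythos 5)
Your proof is correct and follows essentially the same route as the paper: reduce to counting lattice points in the dilates $\lambda B_\rho$, compute $|B_\rho|=c(\rho)$ by polar coordinates, and obtain the $\op{O}(\lambda^{n-1})$ error term. The only real difference is in how the error term is established: you invoke the standard lattice-point estimate for compact star-shaped bodies with smooth boundary (justified via a tube/boundary-measure bound plus the Euler-identity observation that $\nabla\rho\neq 0$ on $\{\rho=1\}$), whereas the paper gives a short self-contained sandwich argument $A(\lambda-C\sqrt n)\subset B(\lambda)\subset A(\lambda+C\sqrt n)$ driven by the Lipschitz bound $|\rho(\xi)-\rho(\eta)|\leq C|\xi-\eta|$, which is automatic from $\partial_{\xi_j}\rho\in S_0(\R^n)$. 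Both are equivalent in substance; the paper's version is a bit more elementary and keeps the symbol-class hypothesis explicit, while yours outsources to the standard geometry-of-numbers result. One remark worth flagging: you (correctly) obtain $c(\rho)=\tfrac1n\int_{\bS^{n-1}}\rho(\omega)^{-n}\,d^{n-1}\omega$; the exponent $-\tfrac1n$ in the displayed statement of the proposition is a typo in the paper (the paper's own proof, and the later use in Proposition~\ref{prop:Weyln}, both give $\rho^{-n}$).
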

\begin{proof}
 The proof follows along similar lines as that of the classical proof of the Weyl law for the Laplacian on an ordinary torus. For $k\in \Z^n$ set $I(k)=k+[0,1]^n$. Note that 
\begin{equation*}
 |\xi-k|\leq \sqrt{n}\qquad \text{for all $\xi\in I(k)$}.
\end{equation*}
For $\lambda \geq 0$, we set 
\begin{equation*}
 A(\lambda)= \left\{\xi\in \R^n;\ \rho(\xi)<1\right\}, \qquad B(\lambda)=\bigcup_{k\in A(\lambda)\cap \Z^n} I(k). 
\end{equation*}
 We observe that by~(\ref{eq:Weyl.counting-symbol}) we have
\begin{equation}
  N(P_\rho;\lambda)=\#\left(A(\lambda)\cap \Z^n\right) = \sum_{k\in A(\lambda)\cap \Z^n} I(k) = \left| B(\lambda)\right|. 
  \label{eq:Weyl.counting-volume}
\end{equation}
 
 As $\rho(\xi)\in S_{1}(\R^n)$, the partial derivatives $\partial_{\xi_1}\rho(\xi), \ldots, \partial_{\xi_n}\rho(\xi)$ are symbols in $S_0(\R^n)$, and so they are bounded on $\R^n\setminus 0$. Therefore, there is $C>0$ such that 
\begin{equation}
 |\rho(\xi)-\rho(\eta)| \leq C|\xi-\eta| \qquad \forall \xi,\eta \in \R^n\setminus 0.
 \label{eq:Weyl.Taylor-rho} 
\end{equation}

Let $\lambda\geq 0$ and $k\in A(\lambda)\cap \Z^n$. The inequality~(\ref{eq:Weyl.Taylor-rho}) for $\xi\in I(k)$ and $\eta =k$ gives 
\begin{equation*}
 |\rho(\xi)|\leq |\rho(k)|+C|\xi-k|\leq \lambda +C\sqrt{n}. 
\end{equation*}
Therefore, we see that $B(\lambda)\subset A(\lambda +C\sqrt{n})$. 

Conversely,  suppose that $\lambda \geq C\sqrt{n}$ and let $\xi \in A(\lambda-C\sqrt{n})$ and $k\in \Z^n$ be such that $\xi\in I(k)$. Then by using~(\ref{eq:Weyl.Taylor-rho}) we get
\begin{equation*}
 |\rho(k)|\leq |\rho(\xi)| + C|\xi-k|\leq (\lambda -C\sqrt{n})+C\sqrt{n}=\lambda.
\end{equation*}
Thus, $k\in A(\lambda)\cap \Z^n$, and so $\xi \in B(\lambda)$. It then follows that $A(\lambda-C\sqrt{n})\subset B(\lambda)$. 

Combining the inclusions $A(\lambda-C\sqrt{n})\subset B(\lambda) \subset A(\lambda +C\sqrt{n})$ with~(\ref{eq:Weyl.counting-volume}) gives
\begin{equation*}
 \left| A(\lambda-C\sqrt{n})\right| \leq N(P_\rho;\lambda) \leq  \left| A(\lambda+C\sqrt{n})\right| \qquad \forall \lambda \geq C\sqrt{n}. 
\end{equation*}
Moreover, the homogeneity of $\rho(\xi)$ implies that, for $\lambda>0$, we have
\begin{equation*}
 \left|A(\lambda)\right|=\left|\left\{\xi\in \R^n;\ \rho(\lambda^{-1}\xi)<1\right\}\right|=\left|\lambda A(1)\right|=\lambda^n\left| A(1)\right|. 
\end{equation*}
Therefore, for $\lambda \geq C\sqrt{n}$ we have 
\begin{equation*}
 (\lambda-C\sqrt{n})^n\left|A(1)\right| \leq  N(P_\rho;\lambda) \leq  (\lambda-C\sqrt{n})^n\left|A(1)\right|.   
\end{equation*}
As $(\lambda \pm C\sqrt{n})^n=\lambda^n+ \op{O}(\lambda^{n-1})$ as $\lambda \rightarrow \infty$, we then deduce that
\begin{equation}
  N(P_\rho;\lambda) = \left|A(1)\right| \lambda^n\left(1+  \op{O}(\lambda^{-1})\right).
  \label{eq:Weyl.couting-Weyl-Alambda} 
\end{equation}

It remains to compute  $|A(1)|$. Thanks to the homogeneity of $\rho(\xi)$,  in polar coordinates $\xi=r\eta$ the inequality $\rho(\xi)\leq 1$ gives $1\geq \rho(r\eta) =r\rho(\eta)$, i.e., $r<\rho(\eta)^{-1}$. Therefore, by integrating in polar coordinates we get
\begin{equation*}
  \left|A(1)\right|=\int_{\rho(\xi)\leq 1} d\xi = 
   \int_{\bS^{n-1}} \left(\int_0^{\rho(\eta)^{-1}} r^{n-1}dr\right) d^{n-1}\eta= c(\rho), 
\end{equation*}
where we have set $c(\rho)= \frac{1}{n} \int_{\bS^{n-1}}\rho(\eta)^{-\frac1{n}}d^{n-1}\eta$. Note that $c(\rho)>0$ since $\rho(\xi)>0$ on $\bS^{n-1}$. Combining this with~(\ref{eq:Weyl.couting-Weyl-Alambda}) gives the WeylÕs law, 
\begin{equation}
  N(P_\rho;\lambda) =c(\rho)\lambda^n\left(1+ \op{O}(\lambda^{-1})\right) \qquad \text{as $\lambda\rightarrow \infty$}.
  \label{eq:Weyl.counting} 
\end{equation}

It is routine to deduce from~(\ref{eq:Weyl.counting}) the WeylÕs law~(\ref{eq:Weyl.eigenvalues}) for the eigenvalues $\lambda_\ell(P_\rho)$. Namely, as~(\ref{eq:Weyl.counting}) implies that 
$N(P_\rho;\lambda) =c(\rho)\lambda^n$, we see that $\lambda\sim (c(\rho)^{-1}N(P_\rho;\lambda))^{\frac{1}{n}}$. This allows us to rewrite~(\ref{eq:Weyl.counting}) in the form, 
\begin{equation*}
 \lambda^n=c(\rho)^{-1}N(P_\rho;\lambda)\left[1 +\op{O}\left(N(P_\rho;\lambda)^{-\frac1{n}}\right) \right] \qquad \text{as $\lambda\rightarrow \infty$}. 
\end{equation*}
Equivalently, as $\lambda\rightarrow \infty$, we have
\begin{equation}
 \lambda = \left(c(\rho)^{-1}N(P_\rho;\lambda)\right)^{\frac{1}{n}}\left[1 +\op{O}\left(N(P_\rho;\lambda)^{-\frac1{n}}\right) \right] = \left(c(\rho)^{-1}N(P_\rho;\lambda)\right)^{\frac{1}{n}} +\op{O}(1).
 \label{eq:Weyl.counting-lambda} 
\end{equation}

Note that, for all $\ell\geq 0$ and $\epsilon>0$, we have $N(P_\rho;\lambda_\ell(P_\rho)-\epsilon)\leq \ell \leq N(P_\rho;\lambda_\ell(P))$. Combining this with~(\ref{eq:Weyl.counting-lambda} ) we see that, as $\ell \rightarrow \infty$, we have
\begin{gather*}
 \lambda_\ell(P_\rho) = \left[c(\rho)^{-1}N\left(P_\rho;\lambda_\ell(P_\rho)\right)\right]^{\frac{1}{n}} +\op{O}(1)\leq \left(c(\rho)^{-1}\ell \right)^{\frac{1}{n}} +\op{O}(1),\\
  \lambda_\ell(P_\rho)=  \lambda_\ell(P_\rho)-\epsilon +\op{O}(1) = \left[c(\rho)^{-1}N\left(P_\rho;\lambda_\ell(P_\rho)-\epsilon\right)\right]^{\frac{1}{n}} +\op{O}(1)
  \geq \left(c(\rho)^{-1}\ell \right)^{\frac{1}{n}} +\op{O}(1). 
\end{gather*}
This gives the WeylÕs law~(\ref{eq:Weyl.eigenvalues}). The proof is complete.  
\end{proof}

In what follows we let $T_0$ be a selfadjoint operator in $\cL^{1,\infty}$ whose eigenvalue sequence is the harmonic sequence, i.e., there is an orthonormal basis $(e_\ell)_{\ell \leq 0}$ of $\cH_\theta$ such that $T_0 e_\ell =(\ell+1)^{-1}e_\ell$ for all $\ell\geq 0$. 

\begin{proposition}\label{prop:Weyln} 
 For every $\rho(\xi)\in S_{-n}(\R^n)$, we have
\begin{equation}
 P_\rho= \frac{1}{n} \Res (P_\rho) T_0 \qquad \bmod\  \op{Com}(\cL^{1,\infty}). 
\label{eq:Scalar-commutator} 
\end{equation}
\end{proposition}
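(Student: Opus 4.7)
The plan is to reduce the statement to a positive scalar symbol, apply Weyl's law (Proposition~\ref{prop:Weyl1}) to obtain precise asymptotics for the characteristic values of $P_\rho$, and then conclude by combining a unitary equivalence with Proposition~\ref{prop:Quantized.trace-class-commutators}. Since both sides of~(\ref{eq:Scalar-commutator}) depend $\C$-linearly on $\rho$ and $|\xi|^{-n}\in S_{-n}(\R^n)$ is strictly positive on $\R^n\setminus 0$, splitting $\rho$ into real and imaginary parts and adding a sufficiently large multiple of $|\xi|^{-n}$ to each reduces to the case $\rho(\xi)>0$ on $\R^n\setminus 0$.

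Under this positivity assumption I set $\sigma:=\rho^{-1/n}\in S_1(\R^n)$ and apply Proposition~\ref{prop:Weyl1} to $\sigma$. The constant appearing there is
\begin{equation*}
c(\sigma)=\frac{1}{n}\int_{\bS^{n-1}}\sigma(\xi)^{-n}\,d^{n-1}\xi=\frac{1}{n}\int_{\bS^{n-1}}\rho(\xi)\,d^{n-1}\xi=\frac{1}{n}\Res(P_\rho),
\end{equation*}
the last equality using that $\tau$ is the identity on scalars. The operators $P_\rho$ and $P_\sigma$ are simultaneously diagonal in the basis $(U^k)_{k\in\Z^n}$ with $P_\rho U^k=\sigma(k)^{-n}U^k$ for $k\ne 0$, so the $(\ell+1)$-th smallest eigenvalue of $P_\sigma$ is the $(\ell+1)$-th largest of $P_\rho$, giving $\mu_\ell(P_\rho)=\lambda_{\ell+1}(P_\sigma)^{-n}$. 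Substituting the Weyl expansion $\lambda_{\ell+1}(P_\sigma)=(c(\sigma)^{-1}(\ell+1))^{1/n}+\op{O}(1)$ and expanding yields
\begin{equation*}
\mu_\ell(P_\rho)=\frac{1}{n(\ell+1)}\Res(P_\rho)+\op{O}\!\left(\ell^{-1-1/n}\right).
\end{equation*}

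To finish, I pick an orthonormal basis $(f_\ell)_{\ell\ge 0}$ of $(\C\cdot 1)^\perp$ with $P_\rho f_\ell=\mu_\ell(P_\rho)f_\ell$, set $g_0:=1$ and $g_\ell:=f_{\ell-1}$ for $\ell\ge 1$, and let $V$ be the unitary of $\cH_\theta$ sending $e_\ell$ to $g_\ell$. Then $V^*P_\rho V$ is diagonal in $(e_\ell)$ with eigenvalues $0,\mu_0(P_\rho),\mu_1(P_\rho),\ldots$, and its difference with $\frac{1}{n}\Res(P_\rho)T_0$ has diagonal entries of size $\op{O}(\ell^{-1-1/n})$ (the telescoping contribution $c/\ell-c/(\ell+1)=\op{O}(\ell^{-2})$ being absorbed because $n\ge 2$), hence lies in $\cL^1$ and therefore in $\Com(\cL^{1,\infty})$ by Proposition~\ref{prop:Quantized.trace-class-commutators}. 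On the other hand $V^*P_\rho V-P_\rho=[V^*,P_\rho V]\in\Com(\cL^{1,\infty})$ since $P_\rho V\in\cL^{1,\infty}$ and $V^*\in\cL(\cH_\theta)$. Adding these two congruences gives~(\ref{eq:Scalar-commutator}). The main point requiring care is tracking the exponent in the eigenvalue expansion precisely enough so that the diagonal remainder after unitary conjugation is genuinely trace-class; once that is secured, the rest amounts to standard bookkeeping around the one-dimensional kernel of $P_\rho$.
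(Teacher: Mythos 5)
Your proof is correct and follows essentially the same route as the paper: reduce to positive scalar symbols by linearity, set $\sigma=\rho^{-1/n}$, use Proposition~\ref{prop:Weyl1} to get $\mu_\ell(P_\rho)=\tfrac1{n}\Res(P_\rho)\ell^{-1}+\op{O}(\ell^{-1-1/n})$, conjugate by a unitary matching the eigenbasis to the harmonic basis, and conclude via Proposition~\ref{prop:Quantized.trace-class-commutators}. The only (welcome) cosmetic difference is that you explicitly fold the one-dimensional kernel $\C\cdot 1$ into the unitary by setting $g_0:=1$, $g_\ell:=f_{\ell-1}$, which makes $V$ a genuine unitary of $\cH_\theta$ and keeps the diagonal bookkeeping clean; the paper glosses over this point slightly when defining $V$.
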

\begin{proof}
Let $\rho(\xi)\in S_{-n}(\R^n)$. As mentioned above, $P_\rho$ is an operator in $\Psi^{-n}(\cA_\theta)$ with principal symbol $\rho(\xi)$, and so $\Res(P_\rho)=\int_{\bS^{n-1}}\rho(\xi) d^{n-1}\xi$. 

We observe that both sides of~(\ref{eq:Scalar-commutator}) depends linearly on the symbol $\rho(\xi)$. Upon writing $\rho(\xi)=\frac12(\rho(\xi)+\overline{\rho(\xi)})+\frac1{2i}(\rho(\xi)-\overline{\rho(\xi)})$ we see it is enough to establish~(\ref{eq:Scalar-commutator}) when $\rho(\xi)$ is real-valued. 

Assume that $\rho(\xi)$ is real-valued and is not identically zero. Set $c=\sup\{|\rho(\xi)|;\ |\xi|=1\}$. By homogeneity $|\rho(\xi)|=|\xi|^{-n} \rho(|\xi|^{-1}\xi)\leq c|\xi|^{-n}$ for all $\xi \in \R^n\setminus 0$. In particular, we see that $c>0$, and so $\rho(\xi)+2c|\xi|^{-n}\geq c|\xi|^{-n}>0$ for $\xi\neq 0$. As $\rho(\xi)=(\rho(\xi)+2c|\xi|^{-n})-2c|\xi|^{-n}$, we see that $\rho(\xi)$ is a linear combination of positive symbols in $S_{-n}(\R^n)$. Together with the linearity of~(\ref{eq:Scalar-commutator}) this further reduces the verification of~(\ref{eq:Scalar-commutator}) to the case of positive symbols. 

Suppose now that $\rho(\xi)>0$. Set $\sigma(\xi)=\rho(\xi)^{-\frac1n}$. Then $\sigma(\xi)\in S_1(\R^n)$ and $\rho(\xi)=\sigma(\xi)^{-n}$. This implies that $P_\rho (U^k)=\sigma (k)^{-n}U^k$ for all $k\in \Z^n\setminus 0$, and so $P_\rho =P_\sigma^{-n}$. It then follows that $\mu_\ell(P_\rho)=\lambda_{\ell +1}(P_\rho)^{-n}$ for all $\ell\geq 0$. Combining this with the WeylÕs law~(\ref{eq:Weyl.eigenvalues}) then shows that, as $\ell \rightarrow \infty$, we have
\begin{equation*}
\mu_\ell(P_\rho) = \lambda_{\ell +1}(P_\rho)^{-n}=\left[ c(\sigma)^{-1}\ell^{\frac{1}{n}}\left(1+\op{O}(\ell^{-\frac1n})\right)\right]^{-n} = c(\sigma)\frac{1}{\ell}\left(1+\op{O}(\ell^{-\frac1n})\right). 
\end{equation*}
Here $c(\sigma)=\frac{1}{n} \int_{\bS^{n-1}}\sigma(\xi)^{-n}d^{n-1}\xi= \frac{1}{n} \int_{\bS^{n-1}}\rho(\xi)d^{n-1}\xi= \Res(P_\rho)$.  Thus,
\begin{equation}
 \mu_\ell(P_\rho)= \frac{1}{n}\Res (P_\rho) \frac{1}{\ell} + \op{O}\left(\ell^{-(1+\frac1n)}\right).
 \label{eq:Weyl.Weyl-law-mul}
\end{equation}

Recall there is an orthonormal basis $(e_\ell)_{\ell \leq 0}$ of $\cH_\theta$ such that $T_0 e_\ell =(\ell+1)^{-1}e_\ell$ for all $\ell\geq 0$. Let $(v_\ell)_{\ell\geq 0}$ be a re-arrangement of the sequence $(U^k)_{k\in \Z}$ such that $P_\rho v_\ell= \mu_\ell(P_\rho)v_\ell$ for all $\ell\geq 0$. In addition, let $V\in \cL(\sH)$ be the unitary operator such that $Vv_\ell=e_\ell$ for all $\ell \geq 0$, and set $T_\rho=\frac{1}n\Res (P_\rho)V^*T_0V$. We have
\begin{equation}
 T_\rho-\frac{1}n\Res (P_\rho)T_0= \frac{1}n\Res (P_\rho) (V^*T_0V-V)=  \frac{1}n\Res (P_\rho) [V^*,T_0V]\in  \Com(\cL^{1,\infty}). 
 \label{eq:Weyl.T0-Trho}
\end{equation}
Moreover, $T_\rho v_\ell= \frac{1}n\Res (P_\rho)V^*T_0e_\ell= \frac{1}n\Res (P_\rho)(\ell+1)^{-1} v_\ell$. 
It then follows that the operator $P_\rho-T_\rho$ is diagonal with respect to the orthonormal basis $(v_\ell)_{\ell\geq 0}$ with eigenvalues $\mu_\ell(P_\rho)-\frac{1}n\Res (P_\rho)(\ell+1)^{-1}$, $\ell \geq 0$. Combining this with~(\ref{eq:Weyl.Weyl-law-mul}) gives
\begin{equation*}
 \Tr\left( |P_\rho-T_\rho|\right) = \sum_{\ell \geq 0} \left| \mu_\ell(P_\rho)-\frac{1}{n(\ell+1)}\Res (P_\rho)\right|<\infty. 
\end{equation*}
This shows that $P_\rho-T_\rho$ is a trace-class operator, and hence is contained in $\Com(\cL^{1,\infty})$ by Proposition~\ref{prop:Quantized.trace-class-commutators}. Combining this with~(\ref{eq:Weyl.T0-Trho}) we then deduce that $P_\rho-T_\rho\in \Com(\cL^{1,\infty})$. This proves~(\ref{eq:Scalar-commutator}) when $\rho(\xi)$ is positive. The proof is complete. 
\end{proof}

\section{Connes' trace theorem for Noncommutative Tori}\label{sec:Connes-Trace}  
In this section, we prove the version for noncommutative tori of Connes' trace theorem and give a couple of applications. 

\begin{theorem}\label{thm:Trace-Thm} 
Every operator $P\in \Psi^{-n}(\cA_\theta)$ is strongly measurable, and we have
\begin{equation}
 \bint P= \frac{1}{n} \Res (P).
 \label{eq:Trace-Thm.trace-formula} 
\end{equation}
\end{theorem}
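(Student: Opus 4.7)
\emph{Approach.} The plan is to reduce $P$, modulo the closed commutator subspace $\overline{\Com(\cL^{1,\infty})}$, to a scalar multiple of the reference operator $T_0$, and then to exploit the fact that every normalized positive trace on $\cL^{1,\infty}$ is continuous, vanishes on $\overline{\Com(\cL^{1,\infty})}$, and takes the value~$1$ on~$T_0$.

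\emph{Step 1 (reduction to a scalar symbol).} I would first pick a classical symbol $\rho(\xi)\in S^{-n}(\R^n;\cA_\theta)$ representing $P$ and apply Proposition~\ref{prop:derivatives.sums} (whose accompanying remark guarantees that classical symbols yield classical components) to obtain $\rho_1,\ldots,\rho_n\in S^{-n}(\R^n;\cA_\theta)$ with
\[
\rho(\xi)=\tau\bigl[\rho(\xi)\bigr]+\delta_1\rho_1(\xi)+\cdots+\delta_n\rho_n(\xi),
\]
so that $P=P_{\tau[\rho]}+\sum_{j=1}^{n} P_{\delta_j\rho_j}$. By Proposition~\ref{prop:derivatives.ComL1infty} each operator $P_{\delta_j\rho_j}$ lies in $\overline{\Com(\cL^{1,\infty})}$, so $P\equiv P_{\tau[\rho]}$ modulo $\overline{\Com(\cL^{1,\infty})}$. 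Here $\sigma(\xi):=\tau[\rho(\xi)]\in S^{-n}(\R^n)$ is a scalar-valued classical symbol with principal part $\sigma_{-n}(\xi)=\tau[\rho_{-n}(\xi)]$.

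\emph{Step 2 (reduction of the scalar part to $T_0$, and conclusion).} The remainder $\sigma-\sigma_{-n}$ lies in $\stS^{-n-1}(\R^n)$ modulo smoothing, so Proposition~\ref{prop:continuity-weak-Schatten-class} places $P_\sigma-P_{\sigma_{-n}}$ in $\cL^{p,\infty}$ with $p=n/(n+1)<1$, hence in the trace class, and so in $\Com(\cL^{1,\infty})$ by Proposition~\ref{prop:Quantized.trace-class-commutators}. Applying Proposition~\ref{prop:Weyln} to $\sigma_{-n}\in S_{-n}(\R^n)$ then gives $P_{\sigma_{-n}}\equiv \tfrac{1}{n}\Res(P_{\sigma_{-n}})\,T_0$ modulo $\Com(\cL^{1,\infty})$. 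Since $\Res$ reads only the degree~$-n$ homogeneous symbol, $\Res(P_{\sigma_{-n}})=\int_{\bS^{n-1}}\tau[\rho_{-n}(\xi)]\,d^{n-1}\xi=\Res(P)$, and chaining the two reductions yields
\[
P-\frac{1}{n}\Res(P)\,T_0\in \overline{\Com(\cL^{1,\infty})}.
\]
For any normalized positive trace $\varphi$ on $\cL^{1,\infty}$, Proposition~\ref{prop:Quantized.continuity-positive-trace} supplies continuity, so $\varphi$ annihilates $\overline{\Com(\cL^{1,\infty})}$; together with $\varphi(T_0)=1$, this forces $\varphi(P)=\tfrac{1}{n}\Res(P)$, which simultaneously delivers the strong measurability of $P$ and the formula~(\ref{eq:Trace-Thm.trace-formula}).

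\emph{Main obstacle.} The one real subtlety is that the derivations $\delta_j$ are unbounded on $\cH_\theta$, so the operators $P_{\delta_j\rho_j}$ are not genuine commutators in $\cL^{1,\infty}$ but merely land in the norm-closure of the commutator subspace. This is exactly why Proposition~\ref{prop:Quantized.continuity-positive-trace} (continuity of positive traces) is indispensable, and it is this continuity, rather than the tracial property alone, that upgrades Dixmier-trace measurability all the way to strong measurability.
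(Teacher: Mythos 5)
Your proof is correct and follows essentially the same route as the paper's: reduce modulo the closed commutator subspace to a scalar-symbol \psido\ using Propositions~\ref{prop:derivatives.sums} and~\ref{prop:derivatives.ComL1infty}, then invoke Proposition~\ref{prop:Weyln} to replace that scalar operator by $\tfrac{1}{n}\Res(P)\,T_0$, and finally use continuity of positive traces (Proposition~\ref{prop:Quantized.continuity-positive-trace}) together with normalization. The only cosmetic difference is the order of the two reductions: the paper first truncates $\rho$ to its principal part $(1-\chi)\rho_{-n}$ (a trace-class perturbation) and then applies the $\tau$-averaging to that scalar-free symbol, whereas you apply the $\tau$-averaging to the full classical symbol $\rho$ first and only afterwards discard the sub-principal tail $\tau[\rho]-(1-\chi)\tau[\rho_{-n}]\in S^{-n-1}(\R^n)$ as a trace-class term; these two orderings are interchangeable and yield the same estimate $P\equiv\tfrac{1}{n}\Res(P)\,T_0\bmod\overline{\Com(\cL^{1,\infty})}$.
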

\begin{proof}
 Let $P\in \Psi^{-n}(\cA_\theta)$ have symbol $\rho(\xi)\in S^{-n}(\R^n;\cA_\theta)$ and principal symbol $\rho_{-n}(\xi)\in S_{-n}(\R^n;\cA_\theta)$. Set $\tilde{\rho}_{-n}(\xi) =(1-\chi)\in C^\infty_c(\R^n)$ is equal to $1$ near $\xi=0$ and vanishes for $|\xi|\geq 1$. Then $\tilde{\rho}_{-n}(\xi)\in S^{-n}(\R^n)$ and $\rho(\xi) -\tilde{\rho}_{-n}(\xi)\in S^{-n-1}(\R^n)$. As operators in $\Psi^{-n-1}(\cA_\theta)$ are trace-class, and hence are contained in $\Com(\cL^{1,\infty})$ by Proposition~\ref{prop:Quantized.trace-class-commutators}, we deduce that
\begin{equation}
 P=P_\rho=P_{\tilde{\rho}_{-n}} \qquad \bmod \Com(\cL^{1,\infty}).
 \label{eq:Trace-Thm.P-Prhon}  
\end{equation}

By applying Proposition~\ref{prop:derivatives.sums} and Proposition~\ref{prop:derivatives.ComL1infty} to $\tilde{\rho}_{-n}(\xi)$ we see that there are symbols $\rho_1(\xi), \ldots,\rho_n(\xi)$ in $\stS^{-n}(\R^n;\cA_\theta)$ such that 
\begin{equation}
 P_{\tilde{\rho}_{-n}} = P_{\tau[\tilde{\rho}_{-n}]} + P_{\delta_1\rho_1}+ \cdots + P_{\delta_n\rho_n}= P_{\tau[\tilde{\rho}_{-n}]} \quad \bmod \overline{\Com(\cL^{1,\infty})},
 \label{eq:Trace-Thm.Prhon-Com}
\end{equation}
 where $\overline{\Com(\cL^{1,\infty})}$ is the closure in $\cL^{1,\infty}$ of $\Com(\cL^{1,\infty})$. 
 Note that $\tau[\tilde{\rho}_{-n}(\xi)]=(1-\chi(\xi))\tau[\rho_{-n}(\xi)]$, and so $P_{\tau[\tilde{\rho}_{-n}]}= P_\tau[\rho_{-n}]$. 
 Combining this with~(\ref{eq:Trace-Thm.P-Prhon}) and~(\ref{eq:Trace-Thm.Prhon-Com}) we get 
\begin{equation}
 P= P_{\tau[\rho_{-n}] } \qquad \bmod \overline{\Com(\cL^{1,\infty})}.
 \label{eq:Trace-Thm.Ptaurhon-Com} 
\end{equation}
 
Note that $\tau[\rho_{-n}(\xi)]\in S^{-n}(\R^n)$ and $\Res(P_{\tau[\rho{-n}]})= \int_{\bS^{n-1}}\tau[\rho_{-n}(\xi)]d^{n-1}\xi=\Res(P)$. Therefore, by using Proposition~\ref{prop:Weyln} we see that $P_{\tau[\rho_{-n}]}-n^{-1}\Res(P)T_0\in \Com(\cL^{1,\infty})$. Combining this with~(\ref{eq:Trace-Thm.Ptaurhon-Com}) then gives 
\begin{equation}
 P= \frac{1}{n} \Res (P) T_0  \qquad \bmod \overline{\Com(\cL^{1,\infty})}.
 \label{eq:Trace-Thm.P-Com} 
\end{equation}
 
 Now, let $\varphi$ be positive normalized trace on $\cL^{1,\infty}$. By Proposition~\ref{prop:Quantized.continuity-positive-trace} this is a continuous trace, and so it is annihilated by $\overline{\Com(\cL^{1,\infty})}$. Combining this with~(\ref{eq:Trace-Thm.P-Com}) and the normalization $\varphi(T_0)=1$ we then obtain 
\begin{equation*}
\varphi(P)=   \frac{1}{n} \Res (P) \varphi(T_0)= \frac{1}{n} \Res (P). 
\end{equation*}
This proves that $P$ is strongly measurable and $\bint P=\frac1{n} \Res(P)$. The proof is complete. 
 \end{proof}

\begin{remark}
Fathizadeh-Khalkhali~\cite{FK:LMP13, FK:JNCG15} obtained a version of Theorem~\ref{thm:Trace-Thm} in dimension~$n=2$ and $n=4$ by a different approach. They established measurability and derived the trace formula~(\ref{eq:Trace-Thm.trace-formula}) in those dimensions, but they did not establish \emph{strong} measurability. That is, they established the trace formula~(\ref{eq:Trace-Thm.trace-formula}) for Dixmier traces only. 
\end{remark}

We shall now explain how Theorem~\ref{thm:Trace-Thm} allows us to recover and extend Connes' integration formula for (flat) NC tori of McDonald-Sukochev-Zanin~\cite[Theorem~6.15]{MSZ:MA19}. First, we have the following consequence of Theorem~\ref{thm:Trace-Thm}. 

\begin{corollary}\label{cor:Trace-thm.super-integration-formula} 
 Let $P\in \Psi^{-n}(\T^n_\theta)$. For every $a\in A_\theta$, the operator $aP$ is strongly measurable, and we have
\begin{equation*}
 \bint aP = \frac{1}{n}\tau(a c_P), 
 \end{equation*}
 where $c_P\in \cA_\theta$ is defined as in~(\ref{eq:NCR.cP}). 
\end{corollary}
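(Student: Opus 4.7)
\medskip

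\noindent\textbf{Proof proposal.} The plan is to first handle the smooth case $a\in\cA_\theta$ by a direct application of Theorem~\ref{thm:Trace-Thm} together with the locality formula~(\ref{eq:NCR.local}), and then extend to arbitrary $a\in A_\theta$ by a density and continuity argument using that all positive traces on $\cL^{1,\infty}$ are continuous (Proposition~\ref{prop:Quantized.continuity-positive-trace}).

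For $a\in\cA_\theta$, I would argue that $aP$ is again a \psido\ of order~$-n$. Indeed, left-multiplication by $a$ is a zeroth-order differential operator corresponding to the constant symbol $\rho_a(\xi)\equiv a$. The composition formula of Proposition~\ref{prop:Composition.sharp-continuity-standard-symbol} gives $\rho_a\sharp\rho(\xi)=a\rho(\xi)$ (only the $\alpha=0$ term survives since $\partial_\xi^\alpha\rho_a=0$ for $\alpha\ne 0$), so $aP=P_{a\rho}\in\Psi^{-n}(\T^n_\theta)$ with principal symbol $a\rho_{-n}(\xi)$. Theorem~\ref{thm:Trace-Thm} then yields strong measurability of $aP$ together with
\begin{equation*}
 \bint aP = \frac{1}{n}\Res(aP) = \frac{1}{n}\tau\big[c_{aP}\big] = \frac{1}{n}\tau\big[a c_P\big],
\end{equation*}
where the last equality is exactly~(\ref{eq:NCR.local}).

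For the general case $a\in A_\theta$, I would pick a sequence $(a_k)_{k\ge 0}\subset\cA_\theta$ with $a_k\to a$ in the operator norm on $\cH_\theta$; this is possible since $\cA_\theta$ is dense in $A_\theta$. By Proposition~\ref{prop:continuity-weak-Schatten-class}, $P\in\scL^{1,\infty}$, so by the ideal property~(\ref{eq:Quantized.properties-mun2}) we have $aP,\,a_kP\in\scL^{1,\infty}$ together with the quasi-norm bound
\begin{equation*}
 \|a_kP-aP\|_{1,\infty} \leq \|a_k-a\|\,\|P\|_{1,\infty} \longrightarrow 0.
\end{equation*}
Let $\varphi$ be any normalized positive trace on $\scL^{1,\infty}$. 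It is continuous by Proposition~\ref{prop:Quantized.continuity-positive-trace}, hence $\varphi(a_kP)\to\varphi(aP)$. On the other hand, the smooth case gives $\varphi(a_kP)=\frac{1}{n}\tau[a_k c_P]$, and the continuity of $\tau$ on $A_\theta$ yields $\tau[a_k c_P]\to\tau[a c_P]$. Taking the limit produces $\varphi(aP)=\frac{1}{n}\tau[a c_P]$ independently of $\varphi$, which is precisely the strong measurability of $aP$ together with the claimed formula.

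The only delicate point is checking that the passage from $\cA_\theta$ to $A_\theta$ preserves both membership in $\scL^{1,\infty}$ and the value of the NC integral; this is where the continuity of positive traces (Proposition~\ref{prop:Quantized.continuity-positive-trace}) is essential, since the quasi-norm convergence $a_kP\to aP$ would otherwise be insufficient. Once that is in place, the rest is a straightforward assembly of the smooth case, the composition calculus, and the locality identity~(\ref{eq:NCR.local}).
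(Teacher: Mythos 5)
Your proof is correct and follows essentially the same route as the paper: establish the formula for $a\in\cA_\theta$ by applying Theorem~\ref{thm:Trace-Thm} together with the locality identity~(\ref{eq:NCR.local}), then extend to all $a\in A_\theta$ by density, using the continuity of positive traces on $\scL^{1,\infty}$ (Proposition~\ref{prop:Quantized.continuity-positive-trace}) and the continuity of the left-composition map $a\mapsto aP$ from $A_\theta$ into $\scL^{1,\infty}$. You merely spell out the two ingredients (the check that $aP\in\Psi^{-n}(\T^n_\theta)$ via the composition calculus, and the quasi-norm estimate $\|a_kP-aP\|_{1,\infty}\leq\|a_k-a\|\,\|P\|_{1,\infty}$) a little more explicitly than the paper does.
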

\begin{proof}
We need to show that, for every normalized positive trace $\varphi$ on $\scL^{1,\infty}$, we have
 \begin{equation}
 \varphi\big( aP\big) = \frac{1}{n} \tau(a c_P) \qquad \forall a\in A_\theta,
 \label{eq:Trace-Thm.integration-phi}  
\end{equation}
When $a\in \cA_\theta$, then by combining the trace formula~(\ref{eq:Trace-Thm.trace-formula}) with~(\ref{eq:NCR.local}) we get
\begin{equation*}
 \varphi(aP)=\bint a P=\frac{1}{n} \Res (P)=  \frac{1}{n} \tau(a c_P). 
\end{equation*}
This shows that~(\ref{eq:Trace-Thm.integration-phi}) holds for all $a\in \cA_\theta$. 

As $\varphi$ is a positive trace,  Proposition~\ref{prop:Quantized.continuity-positive-trace} ensures that $\varphi$ is a continuous linear form on $\scL^{1,\infty}$. Note also that $A_\theta$ acts continuously on $\scL^{1,\infty}$ by left composition, since this action is the composition of the left regular representation of $A_\theta$ in $\cL(\cH_\theta)$ with the natural left-action of $\cL(\cH_\theta)$ on $\scL^{1,\infty}$. It then follows that both sides of~(\ref{eq:Trace-Thm.integration-phi}) are given by continuous linear forms on $A_\theta$. As~(\ref{eq:Trace-Thm.integration-phi}) holds on the dense subspace $\cA_\theta$ it holds on all $A_\theta$. The proof is complete. 
\end{proof}

Let $\Delta=\delta_1^2+\cdots+\delta_n^2$ be flat Laplacian on $\cA_\theta$. In the notation of Section~\ref{sec:Weyl} we have $\Delta=P_{\sigma}$ with $\sigma(\xi)=|\xi|^2$ and $\Delta^{-\frac{n}2}=P_\rho$ with $\rho(\xi)=|\xi|^{-n}$. In particular, $\Delta^{-\frac{n}2}$ is contained in $\Psi^{-n}(\cA_\theta)$ and has principal symbol $|\xi|^{-n}$, and so we have 
\begin{equation*}
 c_{\Delta^{-\frac{n}{2}}}=\int_{\bS^{n-1}} |\xi|^{-n}=|\bS^{n-1}|
\end{equation*}
Therefore, by specializing Corollary~\ref{cor:Trace-thm.super-integration-formula} to $P=\Delta^{-\frac{n}{2}}$ we recover the Connes integration formula for flat NC tori of~\cite{MSZ:MA19} in the following form. 

\begin{corollary}[{\cite[Theorem~6.15]{MSZ:MA19}}] \label{cor:Trace-thm.flat-integration-formula}
 For every $a\in A_\theta$ the operator $a\Delta^{-\frac{n}{2}}$ is strongly measurable, and we have
\begin{equation}
 \bint a\Delta^{-\frac{n}{2}} = \bcn  \tau(a), \qquad \bcn:= \frac{1}{n} |\bS^{n-1}|. 
 \label{eq:Trace-Thm.Integration-Formula}
\end{equation}
\end{corollary}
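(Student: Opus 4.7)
The strategy is simply to recognize the statement as the specialization of Corollary~\ref{cor:Trace-thm.super-integration-formula} to the operator $P=\Delta^{-\frac{n}{2}}$, and then to carry out the (short) verifications that this operator lies in $\Psi^{-n}(\T^n_\theta)$ and that its associated density $c_P$ is the constant $|\bS^{n-1}|$.

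First I would situate $\Delta$ inside the scalar-symbol calculus of Section~\ref{sec:Weyl}. Since $\delta_j(U^k)=k_jU^k$ for every $k\in\Z^n$, the flat Laplacian acts diagonally on the orthonormal basis $(U^k)_{k\in\Z^n}$ of $\cH_\theta$ by $\Delta(U^k)=|k|^2 U^k$. In the notation of Section~\ref{sec:Weyl} this says $\Delta=P_\sigma$ with $\sigma(\xi)=|\xi|^2\in S_2(\R^n)$, and $\Delta$ is a positive selfadjoint Fredholm operator with one-dimensional kernel $\C\cdot 1$. Taking the $(-n/2)$-th power on the orthogonal complement of the kernel and extending by $0$ on $\C\cdot 1$, we obtain an operator which is diagonal in the basis $(U^k)$ with eigenvalues $|k|^{-n}$ for $k\neq 0$ and $0$ for $k=0$; this is precisely $P_\rho$ with $\rho(\xi)=|\xi|^{-n}\in S_{-n}(\R^n)$. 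As already noted at the start of Section~\ref{sec:Weyl}, every such $P_\rho$ lies in $\Psi^{-n}(\T^n_\theta)$ and admits $\rho(\xi)$ as its principal symbol.

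Next I would compute the associated density. Since $\Delta^{-\frac{n}{2}}$ has symbol of degree~$-n$ equal to the scalar $|\xi|^{-n}$, the definition~(\ref{eq:NCR.cP}) yields
\begin{equation*}
 c_{\Delta^{-\frac{n}{2}}}=\int_{\bS^{n-1}}|\xi|^{-n}\,d^{n-1}\xi=|\bS^{n-1}|.
\end{equation*}
Applying Corollary~\ref{cor:Trace-thm.super-integration-formula} to $P=\Delta^{-\frac{n}{2}}$, we conclude for every $a\in A_\theta$ that $a\Delta^{-\frac{n}{2}}$ is strongly measurable and
\begin{equation*}
 \bint a\Delta^{-\frac{n}{2}}=\frac{1}{n}\tau\!\left(a\, c_{\Delta^{-\frac{n}{2}}}\right)=\frac{1}{n}|\bS^{n-1}|\,\tau(a)=\bcn\,\tau(a),
\end{equation*}
which is the claimed formula~(\ref{eq:Trace-Thm.Integration-Formula}).

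There is essentially no genuine obstacle left at this point: the whole content has already been absorbed into Theorem~\ref{thm:Trace-Thm} and its corollary. The only point that requires a moment of care is the identification $\Delta^{-\frac{n}{2}}=P_{|\xi|^{-n}}$ as a classical \psido; this follows purely from the fact that $\Delta$ has scalar symbol, so it is diagonalized by the basis $(U^k)$ and its complex powers can be computed eigenvalue-by-eigenvalue, with the zero eigenvalue handled by the usual convention (the difference between the true power and $P_{|\xi|^{-n}}$ is a finite-rank operator, hence trace-class and a fortiori annihilated by any trace on $\scL^{1,\infty}$ by Proposition~\ref{prop:Quantized.trace-class-commutators}).
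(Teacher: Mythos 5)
Your proof is correct and follows essentially the same route as the paper's: identify $\Delta^{-\frac{n}{2}}$ with $P_\rho$ for $\rho(\xi)=|\xi|^{-n}\in S_{-n}(\R^n)$, compute $c_{\Delta^{-n/2}}=|\bS^{n-1}|$, and specialize Corollary~\ref{cor:Trace-thm.super-integration-formula} to $P=\Delta^{-\frac{n}{2}}$. The only quibble is that your final remark about a finite-rank discrepancy is unnecessary --- with the convention $P_\rho(1)=0$ from Section~\ref{sec:Weyl}, the operator $P_{|\xi|^{-n}}$ coincides exactly with $\Delta^{-\frac{n}{2}}$ (extended by zero on $\ker\Delta=\C\cdot 1$), so there is nothing to correct for.
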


\begin{remark}
 The apparent discrepancy between the constant $c_n$ in~(\ref{eq:Quantized.integration-formula} ) and the constant $\bcn$ above is due to the fact when $\theta=0$ the trace $\tau$ is actually the Radon measure $a\rightarrow (2\pi)^{-n} \int_{\T^n}a(x)dx$. That is, the overall constant $(2\pi)^{-n}$ is absorbed into the definition of $\tau$. 
\end{remark}

\begin{remark}
 When $\theta=0$ we recover the integration formula~(\ref{eq:Quantized.integration-formula}) for the ordinary torus $\T^n$ equipped with its standard flat metric.
\end{remark}

\section{Riemannian Metrics and Laplace-Beltrami Operator} \label{sec:Riemannian}
In this section, we review the main definitions and properties regarding Riemannian metrics and Laplace-Beltrami operators. 

\subsection{Riemannian metrics} In what follows we denote by $\GL_n(\cA_\theta)$ the group of invertible matrices in $M_n(\cA_\theta)$. We also denote by 
$\GL_n^+(\cA_\theta)$ its subset of positive matrices. Here by a positive element of $M_n(\cA_\theta)$ we mean a selfadjoint matrix with non-negative spectrum. As $M_n(\cA_\theta)$ is closed under holomorphic functional calculus we have 
\begin{equation*}
 \GL_n^+(\cA_\theta)=\big\{h^*h; \ h \in h\in \GL_n(\cA_\theta)\big\}=\big\{h^2; \ h \in \GL_n(\cA_\theta), \ h^*=h\big\}.
\end{equation*}
In particular, if $h \in  \GL_n^+(\cA_\theta)$, then $h^{-1}$ and $\sqrt{h}$ are in $\GL_n^+(\cA_\theta)$. When $n=1$ we denote by $\cA_\theta^{++}$ the cone of positive invertible elements of $\cA_\theta$. 

Let $\sX_\theta$ be the free left-module over $\cA_\theta$ generated by the canonical derivations $\delta_1,\ldots, \delta_n$. This plays the role of the module of (complex) vector fields on the NC torus $\cA_\theta$ (\emph{cf}.~\cite{Ro:SIGMA13}). As  $\sX_\theta$ is a free module, the Hermitian metrics on $\sX_\theta$ are in one-to-one correspondence with matrices $h\in \GL^+_n(\cA_\theta)$ (see~\cite{HP:Laplacian, Ro:SIGMA13}). Namely, to any matrix $h=(h_{ij})$ in $ \GL^+_n(\cA_\theta)$ corresponds the Hermitian metric, 
\begin{equation*}
 \acoupt{X}{Y}_h := \sum_{1\leq i,j\leq n}  X_ih_{ij} Y_j^*, \qquad X= \sum_i X_i \delta_i, \quad Y= \sum_j Y_j \delta_j. 
\end{equation*}

On a $C^\infty$-manifold a Riemannian metric is a Hermitian metric on complex vector fields which takes real-values on real vector fields and real differential forms. Equivalently, its matrix and its inverse have real entries. On the NC torus $\T_\theta^n$ the real-valued smooth functions corresponds to  selfadjoint elements of $\cA_\theta$. Let $\cA_\theta^\R$ the real subspace of selfadjoint elements of $\cA_\theta$. More generally, for any $m\geq 1$, we denote by $M_n(\cA_\theta^\R)$ the real subspace of $M_m(\cA_\theta)$ of matrices with selfadjoint entries. Note that Hermitian elements of $M_n(\cA_\theta^\R)$ are symmetric matrices. 

 In general the inverse of an element of $M_m(\cA_\theta^\R)\cap \GL_m(\cA_\theta)$ need not have selfadjoint entries (see~\cite{HP:Laplacian}). In what follows we denote by $\GL_m^+(\cA_\theta^\R)$ the set of  matrices $g\in\GL^+_m(\cA_\theta)$ such that $g$ and its inverse $g^{-1}$ have selfadjoint entries. 

\begin{definition}[\cite{HP:Laplacian, Ro:SIGMA13}] A \emph{Riemmannian metric} on $\T^n_\theta$ is a Hermitian metric on $\sX_\theta$ whose matrix $g=(g_{ij})$ is in $\GL^+_n(\cA_\theta^\R)$. 
\end{definition}

\begin{remark}
 As usual we will often identify Riemannian metrics and their matrices. 
\end{remark}

\begin{example}\label{ex:Riemannian.conf-flat} 
 The standard flat metric is $g_{ij}=\delta_{ij}$. A conformal deformation is of the form $g_{ij}=k^2 \delta_{ij}$ with $k\in \cA_\theta^{++}$. This is the kind of Riemannian metric considered in~\cite{CM:JAMS14, CT:Baltimore11}.  
\end{example}

\begin{example}
 A product metric is of the form, 
\begin{equation}
 g = 
\begin{pmatrix}
 g_1 & 0\\
 0 & g_2
\end{pmatrix} = 
 \begin{pmatrix}
 g_1 & 0\\
 0 & 1
\end{pmatrix} \begin{pmatrix}
 1 & 0\\
 0 & g_2
\end{pmatrix}, \qquad g_j\in \GL_{n_j}^+(\cA_\theta^\R), \quad n_1+n_2=n.  
\label{eq:Riemannian.product-metric}
\end{equation}
 This includes the kind of Riemannian metrics considered in~\cite{CF:MJM19, DS:SIGMA15, DGK:arXiv18}. 
\end{example}

In what follows we shall say that matrices $a=(a_{ij})\in M_m(\cA_\theta)$ and $b=(b_{kl})\in M_{m'}(\cA_\theta)$ are \emph{compatible} when $[a_{ij},b_{kl}]=0$ for all $i,j=1,\ldots,m$ and $k,l=1,\ldots, m'$. We say that $a$ is \emph{self-compatible} when it is compatible with itself.

\begin{example}[Self-compatible Riemannian metrics~\cite{HP:Laplacian}] 
 Let $g=(g_{ij})\in \GL_n^{+}(\cA_\theta)$ be self-compatible. In this case the inverse $g^{-1}$ has selfadjoint entries as well, and so $g\in GL_n^{+}(\cA_\theta^\R)$, i.e., $g$ is a Riemannian metric. This example includes the conformally flat metrics of Example~\ref{ex:Riemannian.conf-flat}. It also includes the \emph{functional metrics} of~\cite{GK:arXiv18}.  
\end{example}

\subsection{Riemannian density} 
On a Riemannian manifold $(M^n,g)$ the Riemannian density is given in local coordinates by integration against $\sqrt{\det (g(x))}$. The volume $(M,g)$ is then obtained as the integral of the Riemannian density. More generally, smooth densities on $M$ are given by integration against positive functions in local coordinates. 

On the NC torus $\T_\theta^n$ the role of positive functions is played by elements of $\cA_\theta^{++}$. It is natural to think of any $\nu \in \cA_\theta^{++}$ as a smooth density on $T_\theta^n$, or if we think in terms of measures, to think as density the corresponding weight $\varphi_{\nu}(a)=\tau[a\nu]$, $a \in A_\theta$ (see~\cite{HP:Laplacian}). 

In order to define Riemannian densities on $\T_\theta^n$ we need a notion of determinant. Recall that if  $h\in \GL_n(\C)$ is positive-definite, then $\det(h) = \exp[ \Tr (\log h)]$. 

\begin{definition}[\cite{FK:AM52, HP:Laplacian}]  
The \emph{determinant} of any  $h\in \GL_n^+(\cA_\theta)$ is defined by 
\begin{equation*}
 \det (h) = \exp \big[ \Tr (\log h) \big], \qquad h\in \GL_n^+(\cA_\theta)\in \cA_\theta^{++},
\end{equation*}
where $\log h$ is defined by holomorphic functional calculus and $\Tr:M_n(\cA)\rightarrow \cA_\theta$ is the matrix trace.
\end{definition}

\begin{remark}
 If $h\in \GL_n^+(\cA_\theta)$, then $\log h$ is a selfadjoint element of $M_n(\cA_\theta)$, and so $\Tr (\log h)$ is a selfadjoint element of $\cA_\theta$. Therefore,  its exponential is selfadjoint and has positive spectrum, i.e., it is contained in $\cA_\theta^{++}$. 
\end{remark}

\begin{proposition}[\cite{HP:Laplacian}]\label{prop:Laplace.det-properties}
 Let $h=(h_{ij})$ be in $\GL_n^+(\cA_\theta)$. 
\begin{enumerate}
 \item[(i)] $\det(h^s)=(\det h)^s$ for all $s\in \R$. 
 
 \item[(ii)] Let $h'\in \GL_n^+(\cA_\theta)$ be compatible and commute with $h$. Then 
\begin{equation*}
  \det(hh')=\det(h)\det(h')=\det(h')\det(h). 
\end{equation*}
\item[(iii)] If $h$ is self-compatible, then we have 
 \begin{equation}
 \det (h)= \sum_{\sigma \in \fS_m} \varepsilon(\sigma) h_{1\sigma(1)} \cdots h_{m \sigma(m)}. 
 \label{eq:det.Leibniz}
\end{equation}
\end{enumerate}
\end{proposition}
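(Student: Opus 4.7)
The plan is to treat the three items in turn, in each case reducing to a standard identity via the definition $\det(h) = \exp[\Tr(\log h)]$ and holomorphic functional calculus.

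For (i), I would use the identity $\log(h^s) = s\log h$ in $M_n(\cA_\theta)$, which follows from holomorphic functional calculus applied to the scalar relation $\log(t^s) = s\log t$ on $t > 0$. Applying $\Tr$ gives $\Tr(\log h^s) = s\,\Tr(\log h)$, and since $a := \Tr(\log h)$ is selfadjoint in $\cA_\theta$, the scalar functional-calculus identity $\exp(sa) = \exp(a)^s$ inside the abelian $C^*$-subalgebra generated by $a$ yields $\det(h^s) = (\det h)^s$.

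For (ii), the crux will be to establish $\log(hh') = \log h + \log h'$ in $M_n(\cA_\theta)$. My approach is to write each logarithm as a Cauchy-type contour integral of the resolvent: since $h$ and $h'$ commute as matrices, each resolvent $(z - h)^{-1}$ commutes with $h'$ and hence with $(w - h')^{-1}$, so integrating over contours around the respective spectra shows that $\log h$ and $\log h'$ commute in $M_n(\cA_\theta)$. This gives $\exp(\log h + \log h') = \exp(\log h)\exp(\log h') = hh'$, and uniqueness of the selfadjoint logarithm of the positive invertible matrix $hh'$ then forces the claimed identity. Taking $\Tr$ and using compatibility of $h$ and $h'$ to conclude that $\Tr(\log h)$ and $\Tr(\log h')$ commute in $\cA_\theta$, the multiplicativity of $\exp$ on commuting selfadjoint elements delivers both orderings of $\det(h)\det(h')$. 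The most delicate point will be lifting matrix-level commutation to commutation of the logarithms; the resolvent argument handles this cleanly, but careful bookkeeping of the contours around the spectra of $h$ and $h'$ will be needed.

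For (iii), I will reduce to the commutative Gelfand picture. Since $h$ is self-compatible and selfadjoint, the unital $C^*$-subalgebra $\cB \subset \cA_\theta$ generated by its entries is commutative, so $\cB \cong C(X)$ for some compact Hausdorff space $X$, and $h$ corresponds to a continuous matrix-valued function $x \mapsto h(x) \in M_n(\C)$. Because spectra are preserved under inclusions of $C^*$-algebras, $h(x)$ is positive definite for every $x$, and the holomorphic functional calculus in $M_n(\cB)$ is computed pointwise. Hence $(\det h)(x) = \exp[\op{tr}\log h(x)]$ coincides with the ordinary numerical determinant of $h(x)$, and applying the classical Leibniz formula pointwise yields~(\ref{eq:det.Leibniz}). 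The easy-to-overlook sanity check is that the entire computation already takes place inside $M_n(\cB)$, so the resulting expression indeed lands in $\cB \subset \cA_\theta$.
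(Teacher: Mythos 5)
The paper itself only cites \cite{HP:Laplacian} for this proposition and contains no proof, so there is nothing in this text to compare against line by line; I am therefore judging the proposal on its own terms.

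Your overall plan is sound and, I believe, matches the intended argument: reduce (i) and (ii) to additivity of $\log$ and multiplicativity of $\exp$ on commuting selfadjoint elements, and reduce (iii) to the classical Leibniz formula via Gelfand duality. Parts (i) and (iii) are essentially complete. In (iii) the one precaution you flag — that the whole computation happens in $M_n(\cB)$ where $\cB$ is the unital commutative $C^*$-subalgebra generated by the $h_{ij}$, so the spectrum of $h$ is computed pointwise and each $h(x)$ is positive definite by spectral permanence — is exactly the right one.

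In (ii), there is a small but genuine gap at the very last step. You establish commutation of $\log h$ and $\log h'$ as matrices (via the resolvent integrals), which gives $\log(hh')=\log h+\log h'$ and hence $\Tr[\log(hh')]=\Tr[\log h]+\Tr[\log h']$. To conclude that $\det(hh')=\det(h)\det(h')=\det(h')\det(h)$ you then need the two selfadjoint elements $a:=\Tr[\log h]$ and $b:=\Tr[\log h']$ of $\cA_\theta$ to commute, but you dispatch this with the phrase ``using compatibility of $h$ and $h'$.'' That is not yet a proof: compatibility tells you the entries of $h$ commute with the entries of $h'$, not that the entries of $\log h$ commute with the entries of $\log h'$. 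The missing link is the observation that $\log h\in M_n(\cB_h)$, where $\cB_h\subset\cA_\theta$ is the closed unital $*$-subalgebra generated by the entries $h_{ij}$. This follows from spectral permanence exactly as in your treatment of (iii): $h\in M_n(\cB_h)$, the spectrum of $h$ is the same computed in $M_n(\cB_h)$ or in $M_n(\cA_\theta)$, and therefore the holomorphic functional calculus for $h$ takes values in $M_n(\cB_h)$. Likewise $\log h'\in M_n(\cB_{h'})$. Since $\cB_h$ and $\cB_{h'}$ commute elementwise by compatibility, all entries of $\log h$ commute with all entries of $\log h'$, hence $a\in\cB_h$ commutes with $b\in\cB_{h'}$, and $\exp(a+b)=\exp(a)\exp(b)=\exp(b)\exp(a)$. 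This is precisely the ``delicate lifting'' you correctly sense is needed; you just carried it out for the matrix-level commutation and not for the entry-level commutation that the final step actually requires. Once this is added, the argument is complete.
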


We are now in a position to define Riemannian densities. 
 
 \begin{definition}\label{def:Riemannian.volume}
 Let $g=(g_{ij})\in \GL_n^+(\cA_\theta^\R)$ be a Riemannian metric. 
\begin{itemize}
\item The \emph{Riemannian density} is $\nu(g):=\sqrt{\det (g)}=\exp\big[\frac12 \Tr\left(\log (g)\right)\big]\in \cA_\theta^{++}$. 

\item The \emph{Riemannian weight} $\varphi_g: A_\theta \rightarrow \C$ is defined by
\begin{equation*}
 \varphi_g(a):=(2\pi)^n\tau\big[a\nu(g)\big], \qquad a \in A_\theta. 
\end{equation*}

\item The \emph{Riemannian volume} is $\Vol_g(\T^n_\theta) := \varphi_g(1) =(2\pi)^n\tau\big[\nu(g)\big]$. 
\end{itemize}
\end{definition}

\begin{example}
 When $g$ is the standard flat metric $g_{ij}=\delta_{ij}$, we have $\Vol_g(\T^n_\theta)=(2\pi)^{n}=|\T^n|$.
\end{example}

\begin{example}
 Let $g_{ij}=k^2\delta_{ij}$, $k\in \cA_\theta^{++}$, be a conformal deformation of the flat Eucldean metric. Then we have
 \begin{equation*}
 \nu(g)=k^n, \qquad \varphi_g(a)= (2\pi)^n\tau\big[ak^n\big], \qquad \Vol_g(\T^n_\theta)=(2\pi)^n\tau\big[k^n\big]. 
\end{equation*}
\end{example}

\begin{example}
 Let $g$ be a product matrix as in~(\ref{eq:Riemannian.product-metric}), where $g_1$ and $g_2$ are compatible. Then
\begin{equation*}
 \nu(g)=\nu(g_1)\nu(g_2). 
\end{equation*}
\end{example}

The very datum of the Riemannian density gives rise to a non-trivial modular geometry. Namely, unless $\nu(g)$ is a scalar the Riemannian weight $\varphi_g$ is not a trace. Therefore, the GNS construction gives rise to non-isometric $*$-representations of the $C^*$-algebra $A_\theta$ and its opposite algebra $A_\theta^\circ$. The latter is represented in the Hilbert space $\sH_g^\circ$ obtained as the completion with respect to the inner product,
\begin{equation*}
 \scal{u}{v}_g^\circ =\varphi_g\big(uv^*\big)= (2\pi)^n\tau\big[v^*\nu(g)u\big], \qquad u,v\in \cA_\theta.
\end{equation*}
The Tomita involution and modular operator are $J_g(a)=\sigma_g(a^*)$ and $\mathbf{\Delta}(a):=\sigma_\nu^2(a)$, where $\sigma_g$ is the inner automorphism, 
\begin{equation*}
 \sigma_g(a)=\nu(g)^{\frac12} a \nu(g)^{-\frac12}, \qquad a \in A_\theta. 
\end{equation*}

\subsection{Laplace-Beltrami operator} 
Let $g=(g_{ij})\in \GL_n^+(\cA_\theta^\R)$ be a Riemannian metric on $\T^n_\theta$.  

\begin{definition}[\cite{HP:Laplacian}] The \emph{Laplace-Beltrami operator} $\Delta_g:\cA_\theta \rightarrow \cA_\theta$ is defined by
\begin{equation}
  \Delta_{g}u  =  \nu(g)^{-1} \sum_{1\leq i,j \leq n} \delta_i \big( \sqrt{\nu(g)} g^{ij} \sqrt{\nu(g)} \delta_j(u)\big), \qquad u \in \cA_\theta.  
  \label{eq:Riemannian.Laplace-Beltrami} 
\end{equation}
 \end{definition}
\begin{example}
 For the flat metric $g_{ij}=\delta_{ij}$ the Laplace-Beltrami operator is just the flat Laplacian $\Delta=\delta_1^2 + \cdots + \delta_n^2$. 
\end{example}

\begin{example}
 Let $g=k^2\delta_{ij}$, $k\in \cA_\theta^{++}$, be a conformally flat metric. In this case, we have
\begin{equation}
 \Delta_g=k^{-2} \Delta + \sum_{1\leq i \leq n} k^{-n}  \delta_i(k^{n-2}) \delta_i. 
 \label{eq:Riemannian.Laplacian-conformally-flat}
\end{equation}
In particular, in dimension $n=2$ we get $\Delta_g=k^{-2}\Delta$. This is reminiscent of the conformal invariance of the Laplace-Beltrami operator in dimension~2. 
\end{example}

In what follows, we set 
\begin{equation}
 |\xi|_g:=\big(\sum_{i,j} g^{ij} \xi_i\xi_j\big)^{\frac12}, \qquad \xi\in \R^n. 
\label{eq:Laplace.norm-g}
\end{equation}
Note that $|\xi|_g\in \cA_\theta^{++}$ when $\xi\neq 0$. In fact (see, e.g., \cite[Corollary~3.1]{HP:Laplacian}) there even are constants $0<d_1<d_2$ such that
\begin{equation}
 d_1|\xi|^2 \leq |\xi|_g^2 \leq d_2|\xi|^2 \qquad \forall \xi\in \R^n. 
 \label{eq:Laplacian.positivity-normg}
\end{equation}

\begin{proposition}[\cite{HP:Laplacian}]\label{prop:Laplacian.properties}
 The following holds.
\begin{enumerate}
 \item $\Delta_g$ is an elliptic 2nd order differential operator with principal symbol $\nu(g)^{-\frac12}|\xi|_g\nu(g)^{\frac12}$. 
 
 \item $\Delta_g$ is an essentially selfadjoint operator on $\sH_g^\circ$ with non-negative discrete spectrum with finite multiplicity. 
 
 \item $\ker \Delta_g=\C$. 
\end{enumerate}
\end{proposition}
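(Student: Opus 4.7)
The three items are essentially standard symbolic + Hilbert-space computations, but each requires careful bookkeeping because $\nu(g)$ and $g^{ij}$ do not commute in the NC setting.

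\medskip

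\textbf{Part (1): Principal symbol and ellipticity.} My first step is to expand the definition~(\ref{eq:Riemannian.Laplace-Beltrami}) via the Leibniz rule: writing $a_{ij}=\sqrt{\nu(g)}g^{ij}\sqrt{\nu(g)}$, one has
\begin{equation*}
\Delta_g u = \nu(g)^{-1}\sum_{i,j}\bigl[a_{ij}\,\delta_i\delta_j u + \delta_i(a_{ij})\,\delta_j u\bigr],
\end{equation*}
which displays $\Delta_g$ as a 2nd order differential operator. Applying Corollary~\ref{cor:PsiDOs.composition-classical} (or a direct symbol computation) gives a principal symbol
$\sum_{i,j}\nu(g)^{-1}\sqrt{\nu(g)}\,g^{ij}\sqrt{\nu(g)}\,\xi_i\xi_j$. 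Because $\xi_i\xi_j\in\R$ is scalar, it commutes with $\sqrt{\nu(g)}$ and can be pulled through; after factoring out $\nu(g)^{-\frac12}$ on the left and $\nu(g)^{\frac12}$ on the right, the principal symbol becomes $\nu(g)^{-\frac12}|\xi|_g^{2}\nu(g)^{\frac12}$. Ellipticity then follows from~(\ref{eq:Laplacian.positivity-normg}): since $|\xi|_g^2\geq d_1|\xi|^2$ in $\cA_\theta$, the element $|\xi|_g^2$ is invertible in $\cA_\theta$ for $\xi\neq 0$, and conjugation by $\nu(g)^{\pm\frac12}$ preserves invertibility.

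\medskip

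\textbf{Part (2): Symmetry and positivity.} The Hilbert structure on $\sH_g^\circ$ is designed so that the ``weighting by $\nu(g)$'' exactly cancels the $\nu(g)^{-1}$ in front of $\Delta_g$. Indeed, for $u,v\in\cA_\theta$, integrating by parts (using $\tau\circ\delta_i=0$ and the cyclic property of $\tau$, together with $\delta_i(w^*)=-(\delta_i w)^*$) yields
\begin{equation*}
\langle \Delta_g u,v\rangle_g^\circ \;=\; (2\pi)^n\sum_{i,j}\tau\bigl[(\delta_i v)^*\sqrt{\nu(g)}\,g^{ij}\sqrt{\nu(g)}\,\delta_j u\bigr].
\end{equation*}
Symmetry in $(u,v)$ follows from the selfadjointness of the matrix $(g^{ij})\in M_n(\cA_\theta^\R)$ together with the cyclicity of $\tau$. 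Setting $u=v$ and writing $g^{-1}=h^*h$ for some $h\in\GL_n(\cA_\theta)$ (available since $g^{-1}\in\GL_n^+(\cA_\theta)$ by closure under holomorphic functional calculus), the quadratic form becomes a sum $\sum_k \tau[Y_k^*Y_k]\geq 0$ with $Y_k=\sum_j h_{kj}\sqrt{\nu(g)}\,\delta_j u$. Hence $\Delta_g$ is symmetric and non-negative on the dense domain $\cA_\theta\subset\sH_g^\circ$.

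\medskip

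\textbf{Part (3): Essential selfadjointness, discrete spectrum, and kernel.} Essential selfadjointness I would derive from elliptic theory on $\T_\theta^n$: because $\Delta_g+1$ is positive and elliptic with invertible principal symbol, one can construct a parametrix $Q\in\Psi^{-2}(\T_\theta^n)$ with $Q(\Delta_g+1)=1+R$, $R\in\Psi^{-\infty}(\T_\theta^n)$, thereby showing that $(\Delta_g+1)\cA_\theta$ is dense in $\sH_g^\circ$. Combined with symmetry and positivity, this forces essential selfadjointness (and identifies the Friedrichs extension with the closure). Moreover, by Proposition~\ref{prop:continuity-weak-Schatten-class}, any such parametrix $Q$ is compact on $\sH_g^\circ$, and then a standard argument (using $(\bar\Delta_g+1)^{-1}=Q-Q R (\bar\Delta_g+1)^{-1}$ together with compactness of $R$) shows that $(\bar\Delta_g+1)^{-1}$ is itself compact and selfadjoint, so $\bar\Delta_g$ has discrete non-negative spectrum with finite multiplicities. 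Finally, if $u\in\ker\bar\Delta_g$, elliptic regularity (from the parametrix) places $u\in\cA_\theta$, whence the Part~(2) identity with $v=u$ and the Cholesky decomposition force $\delta_j u=0$ for all $j$; hence $u\in\C$.

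\medskip

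\textbf{Main obstacle.} The symbolic/algebraic parts are routine; the real work is Part~(3), namely adapting the standard elliptic-operator arguments (parametrix construction, density of the range, compactness of the resolvent, elliptic regularity of $L^2$-kernel elements) to the non-tracial Hilbert space $\sH_g^\circ$, where left and right multiplications by $\nu(g)^{\pm 1/2}$ intervene throughout. This is carried out in~\cite{HP:Laplacian} and would be invoked rather than re-proven in detail.
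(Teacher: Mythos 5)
The paper does not actually prove this proposition; it cites it directly from \cite{HP:Laplacian}, so there is no in-paper argument to compare against. Your sketch is a faithful and correct reconstruction of the standard route (Leibniz expansion for the symbol, integration by parts with $\tau\circ\delta_i=0$ and $\delta_i(a^*)=-(\delta_i a)^*$ for symmetry, Cholesky-type factorization of $g^{-1}$ for positivity, parametrix plus compact resolvent for essential selfadjointness and discreteness, elliptic regularity plus faithfulness of $\tau$ for the kernel), and it matches what \cite{HP:Laplacian} carries out.

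Two small remarks. First, the proposition as printed has a typo: the principal symbol should read $\nu(g)^{-\frac12}|\xi|_g^{2}\nu(g)^{\frac12}$ (as stated in the introduction), and your computation correctly produces the squared version. Second, when you invoke Proposition~\ref{prop:continuity-weak-Schatten-class} to get compactness of the parametrix ``on $\sH_g^\circ$,'' note that that proposition concerns operators on $\cH_\theta$; you should pass through the unitary $(2\pi)^{\frac{n}2}\nu(g)^{\frac12}:\sH_g^\circ\to\cH_\theta$, under which $Q$ corresponds to $\nu(g)^{\frac12}Q\,\nu(g)^{-\frac12}$, a $\Psi$DO of the same negative order by Corollary~\ref{cor:PsiDOs.composition-classical}, hence compact. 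This is exactly the conjugation the paper itself uses when introducing $\cDelta_g=\nu(g)^{\frac12}\Delta_g\nu(g)^{-\frac12}$, so it is a one-line fix, not a gap.
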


\section{ConnesÕ integration formula for Curved NC Tori}\label{sec:Curved-Integration}
In this section, we extend the Connes integration formula of~\cite{MSZ:MA19} to curved NC tori and present a few consequences.

\subsection{Spectral Riemannian densities}
For every positive-definite matrix $g \in \GL_n(\R)$, we have 
\begin{equation}
 \int_{\bS^{n-1}} |\xi|_g^{-n} d^{n-1}\xi = \sqrt{\det (g)} |\bS^{n-1}|, 
 \label{eq:Curved.norm-g-volume} 
\end{equation}
where $|\xi|_g$ is defined as in~(\ref{eq:Laplace.norm-g}). Indeed, as $ \sqrt{\det (g)} |\bS^{n-1}|$ is the volume of the ellipsoid $|\xi|^2_g=1$, we get
\begin{equation}
 \int_{\R^n} e^{-|\xi|_g} d\xi= \bigg( \int_0^\infty e^{-t^2} t^{n-1}dt 
 \bigg) \sqrt{\det (g)} |\bS^{n-1}|. 
 \label{eq:Curved.Gaussian-normg-volume} 
\end{equation}
Integrating in polar coordinates also shows that $ \int_{\R^n} e^{-|\xi|_g} d\xi$ is equal to 
\begin{equation}
 \int_{\bS^{n-1}} \bigg( \int^\infty_0 e^{-t^2|\xi|_g^2} t^{n-1}dt\bigg) d^{n-1}\xi =  \bigg( \int_0^\infty e^{-t^2} t^{n-1}dt 
 \bigg) \bigg( \int_{\bS^{n-1}} |\xi|_g^{-n} d^{n-1}\xi\bigg).
 \label{eq:Curved.Gaussian-normg-polar} 
\end{equation}
Comparing this to~(\ref{eq:Curved.Gaussian-normg-volume}) gives~(\ref{eq:Curved.norm-g-volume}). Note that combining Connes' trace theorem with
with~(\ref{eq:Curved.norm-g-volume}) yields the integration formula~(\ref{eq:Quantized.integration-formula}).

\begin{definition}
 Let $g\in \GL_n^+(\cA_\theta^\R)$ be a Riemannian metric on $\T^n_\theta$. Its \emph{spectral Riemannian density} is 
\begin{equation*}
 \tilde{\nu}(g) := \frac{1}{|\bS^{n-1}|} \int_{\bS^{n-1}} |\xi|_g^{-n} d^{n-1}\xi,
\end{equation*}
where $|\xi|_g$ is defined in~(\ref{eq:Laplace.norm-g}). 
\end{definition}

The following statement gathers the main properties of spectral Riemannian densities. 

\begin{proposition}\label{prop:Curved.properties-tnug}
Let $g\in \GL_n^+(\cA_\theta^\R)$ be a Riemannian metric on $\T^n_\theta$. 
\begin{enumerate}
 \item $\tilde{\nu}(g)\in \cA_\theta^{++}$, i.e., $\tilde{\nu}(g)$ is a smooth density. 
 
 \item We have \[\tilde{\nu}(g)= \pi^{-\frac{n}2}\int_{\R^n} e^{-|\xi|_g^2} d\xi.\] 
 
 \item Let $k\in \cA_\theta^{++}$ be such that $[k,g]=0$. Then $\tilde{\nu}(k^2g)=k^n \tilde{\nu}(g)$. 

\item If $g$ is self-compatible, then 
\begin{equation}\label{eq:Curved.tnug-compatible}
 \tilde{\nu}(g)=\nu(g)=\sqrt{\det (g)},
\end{equation}
where $\det(g)$ is given by~(\ref{eq:det.Leibniz}). 
\end{enumerate}
\end{proposition}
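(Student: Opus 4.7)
The plan is to establish the four items essentially in order, with (1) being a direct sandwiching argument, (2) a polar-coordinates plus one-variable functional-calculus computation, (3) a matrix-commutativity manipulation, and (4) a reduction to the classical Gaussian integral via Gelfand duality applied to a commutative subalgebra. For (1), I would exploit the two-sided inequality (\ref{eq:Laplacian.positivity-normg}): for every $\xi$ on $\bS^{n-1}$ it gives $d_2^{-n/2}\leq|\xi|_g^{-n}\leq d_1^{-n/2}$ in $\cA_\theta$, so $|\xi|_g^{-n}$ is a norm-continuous $\cA_\theta^{++}$-valued function on $\bS^{n-1}$, and integration against surface measure yields a self-adjoint element satisfying $\tilde{\nu}(g)\geq d_2^{-n/2}\cdot 1$, hence positive invertible. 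Smoothness in the $\cA_\theta$-sense follows from the fact that the integrand is norm-smooth on $\bS^{n-1}$ with values in $\cA_\theta$.

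For (2), I would introduce polar coordinates $\xi=t\eta$, $t>0$, $\eta\in\bS^{n-1}$, and apply Fubini (which is legitimate since the integrand is absolutely Bochner-integrable in $\cA_\theta$ in view of (\ref{eq:Laplacian.positivity-normg})). The inner $t$-integral rests on the one-variable identity
\[
\int_0^\infty e^{-t^2 a} t^{n-1}\, dt = \tfrac{1}{2}\Gamma\!\big(\tfrac{n}{2}\big)\, a^{-n/2}, \qquad a\in\cA_\theta^{++},
\]
which follows by continuous functional calculus from the scalar identity $\int_0^\infty e^{-t^2\lambda}t^{n-1}\,dt=\tfrac12\Gamma(\tfrac{n}2)\lambda^{-n/2}$ applied to $a=|\eta|_g^2$. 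Integrating over $\bS^{n-1}$ and using $|\bS^{n-1}|=2\pi^{n/2}/\Gamma(n/2)$ then yields $\pi^{-n/2}\int_{\R^n}e^{-|\xi|_g^2}d\xi=\tilde{\nu}(g)$. For (3), assuming $[k,g_{ij}]=0$ for all $i,j$, I would view $kI_n$ as a scalar matrix in $M_n(\cA_\theta)$; since it commutes with $g$, it also commutes with $g^{-1}$, so $[k,g^{ij}]=0$ for all $i,j$ and $(k^2 g)^{-1}=k^{-2}g^{-1}$. Consequently $|\xi|_{k^2 g}^2=k^{-2}|\xi|_g^2$ inside the commutative subalgebra generated by $k$ and the $g^{ij}$, which gives $|\xi|_{k^2 g}^{-n}=k^n|\xi|_g^{-n}$; pulling $k^n$ out of the integral in the definition of $\tilde{\nu}$ yields the claim.

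For (4), the central observation is that when $g$ is self-compatible the entries $g_{ij}$ generate a commutative unital $*$-subalgebra $\mathcal{B}\subset\cA_\theta$; by the commutant argument of (3), $\mathcal{B}$ also contains every entry of $g^{-1}$ and, via Proposition \ref{prop:Laplace.det-properties}(iii), the element $\det(g)$ and hence $\nu(g)=\sqrt{\det(g)}$; in particular the integrand $|\xi|_g^{-n}$ lies in $\overline{\mathcal{B}}\subset A_\theta$, and so does $\tilde{\nu}(g)$. Passing to the $C^*$-closure and invoking Gelfand duality I would identify $\overline{\mathcal{B}}$ with $C(X)$ for some compact Hausdorff $X$, under which $g$ corresponds to a continuous family $x\mapsto g(x)$ of positive-definite real symmetric $n\times n$ matrices. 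Evaluation at $x$ commutes with the definitions of $\tilde{\nu}$ and of $\nu$, so $\tilde{\nu}(g)(x)=\tilde{\nu}(g(x))$ and $\nu(g)(x)=\sqrt{\det g(x)}$; the classical identity (\ref{eq:Curved.norm-g-volume}) (or equivalently the classical Gaussian integral applied to (2)) then gives the pointwise equality $\tilde{\nu}(g(x))=\sqrt{\det g(x)}$. The main obstacle I anticipate is the compatibility of the noncommutative determinant $\det(g)=\exp[\Tr(\log g)]$ with the classical pointwise determinant on $X$; this is precisely what Proposition \ref{prop:Laplace.det-properties}(iii) delivers, since in the self-compatible case $\det(g)$ is given by the usual Leibniz expansion, which is manifestly the pointwise determinant under the identification with $C(X)$.
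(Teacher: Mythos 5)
Your proposal is correct and follows essentially the same route as the paper's proof: the spectral sandwich from~(\ref{eq:Laplacian.positivity-normg}) for item (1), polar coordinates and the one-variable functional-calculus identity $\int_0^\infty e^{-t^2a}t^{n-1}\,dt=\tfrac12\Gamma(\tfrac n2)a^{-n/2}$ for item (2), the commutation $[k,g^{-1}]=0$ for item (3), and reduction to the classical Gaussian identity~(\ref{eq:Curved.norm-g-volume}) via the Gel'fand transform of the commutative $C^*$-subalgebra generated by the $g_{ij}$ for item (4). Your explicit remark in (4) that the noncommutative determinant $\exp[\Tr(\log g)]$ is identified with the pointwise determinant under the Gel'fand isomorphism precisely because of the Leibniz expansion of Proposition~\ref{prop:Laplace.det-properties}(iii) is a welcome clarification of a step the paper leaves implicit.
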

\begin{proof}
 As $\xi \rightarrow |\xi|_g^{-n}$ is a continuous map with values in the closed real subspace $\cA_\theta^\R$, it is immediate that $\tilde{\nu}(g)\in \cA_\theta^\R$. 
 Therefore, in order to check that $\tilde{\nu}(g)\in \cA_\theta^{++}$ we only need to show that $\tilde{\nu}(g)$ has positive spectrum. 

If $\xi\in \bS^{n-1}$, then~(\ref{eq:Laplacian.positivity-normg}) implies that $\Sp(|\xi|_g)\subset [d_1,d_2]$, and so $\Sp(|\xi|_g^{-n})\subset [d_2',d_1']$, where $d_i':=d_i^{-\frac{n}{2}}>0$. Thus, given any $u\in \cH_\theta$, we have $\scal{|\xi|_g^{-n}u}{u}\geq d_2' \scal{u}{u}$, and so we have 
\begin{equation*}
 \scal{|\xi|_g^{-n}u}{u} = |\bS^{n-1}|^{-1} \int_{\bS^{n-1}} \scal{|\xi|_g^{-n}u}{u} d^{n-1}\xi \geq d_2' \scal{u}{u}. 
\end{equation*}
This implies that $\Sp(\tilde{\nu}(g))\subset [d_2',\infty)\subset (0,\infty)$, and so this gives the first part. 

 In the same way as in~(\ref{eq:Curved.Gaussian-normg-polar}) we see that $ \int_{\R^n} e^{-|\xi|_g^2} d\xi$ is equal to 
\begin{equation*}
 \bigg( \int_0^\infty e^{-t^2} t^{n-1}dt 
 \bigg) \bigg( \int_{\bS^{n-1}} |\xi|_g^{-n} d^{n-1}\xi\bigg)=  \bigg( \int_0^\infty t^{n-1}e^{-t^2} dt 
 \bigg)|\bS^{n-1}|\tilde{\nu}(g). 
\end{equation*}
Here $|\bS^{n-1}|= 2\pi^{\frac{n}2} \Gamma(n/2)^{-1}$, and $ \int_0^\infty t^{n-1}e^{-t^2}dt$ is equal to
\begin{equation*}
 \int_0^\infty t^{\frac{n-1}{2}}e^{-t}\frac{dt}{2\sqrt{t}}  =  \frac{1}{2}\int_0^\infty t^{\frac{n}{2}-1}e^{-t}dt=\frac{1}{2}\Gamma\big(\frac{n}{2}\big)= \pi^{\frac{n}2}|\bS^{n-1}|^{-1}. 
\end{equation*}
It then follows that $\int_{\R^n} e^{-|\xi|_g^2} d\xi=\pi^{\frac{n}2}\tilde{\nu}(g)$. This proves the 2nd part. 

Let $k\in \cA_\theta^{++}$ be such that $[k,g]=0$, and set $\hat{g}=k^2g$. As $k$ commutes with $g^{-1}$, we have $\hat{g}^{-1}=k^{-2}g^{-1}$, and so, for all $\xi\in \R^n$, we have $|\xi|^2_{\hat{g}}=\sum k^{-2}g^{ij}\xi_i\xi_j=k^{-2}|\xi|_{g}^2$ and $|\xi|_{\hat{g}}^{-n}=k^{n}|\xi|_{g}^{-n}$. Thus, 
\[
 \tilde{\nu}(\hat{g})= \int_{\bS^{n-1}} |\xi|_{\hat{g}}^{-n}d\sigma_0(\xi)=    \int_{\bS^{n-1}} k^{n}|\xi|_{g}^{-n}d\sigma_0(\xi)= k^n \tilde{\nu}(g).  
 \] 
 This gives the 3rd part. 
 
 It remains to prove the 4th part. We only have to show that the formula~(\ref{eq:Curved.norm-g-volume}) continues to hold for self-compatible matrices in $\GL_n^+(\cA_\theta^\R)$. Note that this formula immediately extends to any $g\in C(X,\GL_n^+(\R))$, where $X$ is any compact Hausdorff space. 

Suppose that $g$ is self-compatible. The entries $g_{ij}$ of $g$ pairwise commute with each other. Thus, if we let $B$ be the unital $C^*$-algebra that they generate, then we obtain a unital commutative $C^*$-algebra (\emph{cf}.~\cite{HP:Laplacian}). Therefore, under the Gel'fand transform the $C^*$-algebra $B$ is isomorphic to the $C^*$-algebra of continuous functions on the Gel'fand spectrum $X:=\Sp(B)$. As~(\ref{eq:Curved.norm-g-volume}) holds for any element of  $C(X,\GL_n^+(\R))$ we deduce that it holds for $g$. This proves the 4th part and completes the proof.  
\end{proof}

\begin{example}
 Let $g$ be of the form, 
 \begin{equation*}
 g= \begin{pmatrix}
 k_1^2 I_{n_1} & 0 \\
 0 & k_2^2 I_{n_2} 
\end{pmatrix}, \qquad k_j\in \cA_\theta^{++}, \ n_1+n_2=n.
\end{equation*}
Assume further that $[k_1,k_2]=0$. This ensures that $g$ is self-compatible, and so by combining~(\ref{eq:Curved.tnug-compatible}) with Proposition~\ref{prop:Laplace.det-properties} we get
\begin{equation*}
 \tilde{\nu}(g)=\sqrt{\det(g)}=\sqrt{\det(k_1I_1)\det(k_2I_n)}=\sqrt{(k_1k_2)^{2n}}=(k_1k_2)^n. 
\end{equation*}
\end{example}

\subsection{Curved integration formula} Let $g\in \GL_n^+(\cA_\theta^\R)$ be a Riemannian metric on $\T^n_\theta$. As the Hilbert space $\sH_g^\circ$ arises from the GNS construction for the opposite $C^*$-algebra $A_\theta^\circ$, the right action of $\cA_\theta$ on itself by right-multiplication extends to a $*$-representation in $\cH_g^\circ$ of the opposite $C^*$-algebra $A_\theta^\circ$. The left-action by left-multiplication extends to a representation of $A_\theta$. This not $*$-representation. We obtain a $*$-representation by using the Tomita involution $J_g(a)=\sigma_{\nu(g)}(a^*)=\nu(g)^{-\frac12} a^* \nu(g)^{\frac12}$. Namely, we get the $*$-representation, 
\begin{equation*}
 a \longrightarrow a^\circ, \qquad \text{where}\ a^\circ:=J_ga^*J_g = \nu(g)^{-\frac12} a \nu(g)^{\frac12}. 
\end{equation*}
An intertwining unitary isomorphism from $\cH_g^\circ$ onto $\cH_\theta$ is the left-multiplication by $(2\pi)^{\frac{n}2}\sqrt{\nu(g)}$.

Let $\Delta_g:\cH_\theta \rightarrow \cH_\theta$ be the Laplace-Beltrami operator defined by the Riemannian metric $g$. By Proposition~\ref{prop:Laplacian.properties} it is essentially selfadjoint on $\cH_g^\circ$ and has non-negative spectrum. Under the unitary isomorphism $(2\pi)^{\frac{n}2}\nu(g)^{\frac12}:\cH_g^\circ \rightarrow \cH_\theta$ it corresponds to the operator, 
\begin{equation}
 \cDelta_g:= \nu(g)^{\frac12} \Delta_g \nu(g)^{-\frac12}= \sum_{1\leq i,j\leq n} \nu(g)^{-\frac12} \delta_i \nu(g)^{\frac12}g^{ij}\nu(g)^{\frac12}\delta_j\nu(g)^{-\frac12}.
 \label{eq:Curved.Laplacian-prime}  
\end{equation}
This operator is essentially selfadjoint with non-negative spectrum on $\cH_\theta$. It is also a 2nd order differential operator with principal symbol $ \sum_{i,j} \xi_i g^{ij}\xi_j=|\xi|_g^2$. All this allows us to define for every $s>0$ the operators $\Delta_g^{-s}$ and $\cDelta_g^{-s}$ by standard Borel functional calculus on $\cH_g^\circ$ and $\cH_\theta$, respectively.  Note that $\Delta_g^{-s}=\nu(g)^{-\frac12}\cDelta_g^{-s} \nu(g)^{\frac12}$. 

\begin{lemma}\label{lem:Curved.symbol-Deltagn}
 For every $s>0$, the operators $\cDelta_g^{-s}$ and $\Delta_g^{-s}$ are in $\Psi^{-2s}(\T_\theta^n)$ and have principal symbols $|\xi|_g^{-2s}$ and $\nu(g)^{-\frac12} |\xi|_g^{-2s} \nu(g)^{\frac12}$, respectively. 
\end{lemma}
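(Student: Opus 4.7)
The plan is to invoke the theory of complex powers of positive elliptic \psidos\ on NC tori developed in~\cite{LP:Resolvent}, combined with conjugation by the invertible zeroth-order multiplier $\nu(g)^{\frac12}\in \cA_\theta$ to pass between $\cDelta_g^{-s}$ and $\Delta_g^{-s}$. First I would observe that, by Proposition~\ref{prop:Laplacian.properties} together with the unitary intertwining $(2\pi)^{\frac{n}2}\nu(g)^{\frac12}:\cH_g^\circ\to\cH_\theta$, the operator $\cDelta_g$ defined in~(\ref{eq:Curved.Laplacian-prime}) is a 2nd order elliptic differential operator, essentially selfadjoint on $\cH_\theta$, with non-negative discrete spectrum of finite multiplicity and one-dimensional kernel $\C\cdot\nu(g)^{\frac12}$. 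A direct reading of~(\ref{eq:Curved.Laplacian-prime}) using Corollary~\ref{cor:PsiDOs.composition-classical} shows that its principal symbol is $|\xi|_g^2$, since the inner factors $\nu(g)^{\pm\frac12}$ telescope at leading order.

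Next I would handle the kernel by letting $\Pi_0$ be the orthogonal projection onto $\ker\cDelta_g$; this is a finite rank (hence smoothing) operator, so that $\cDelta_g+\Pi_0$ is a positive selfadjoint elliptic \psido\ in $\Psi^2(\T^n_\theta)$ with strictly positive spectrum and the same principal symbol $|\xi|_g^2$. Applying~\cite{LP:Resolvent}, its complex powers, defined via the Cauchy-type integral
\[
(\cDelta_g+\Pi_0)^{-s}= \frac{1}{2i\pi}\int_\Gamma \lambda^{-s}(\cDelta_g+\Pi_0-\lambda)^{-1}d\lambda
\]
over a suitable contour $\Gamma$ encircling the spectrum, lie in $\Psi^{-2s}(\T^n_\theta)$, and their principal symbol is obtained by applying the functional calculus at the symbolic level, giving $(|\xi|_g^2)^{-s}=|\xi|_g^{-2s}$. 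Since $\cDelta_g^{-s}-(\cDelta_g+\Pi_0)^{-s}$ is supported on the finite-dimensional subspace $\ker\cDelta_g$, it is smoothing, and therefore $\cDelta_g^{-s}\in\Psi^{-2s}(\T^n_\theta)$ with principal symbol $|\xi|_g^{-2s}$.

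Finally, to deduce the result for $\Delta_g^{-s}$, I would apply functional calculus to the intertwining relation $\cDelta_g=\nu(g)^{\frac12}\Delta_g\nu(g)^{-\frac12}$, yielding $\Delta_g^{-s}=\nu(g)^{-\frac12}\cDelta_g^{-s}\nu(g)^{\frac12}$. Since left and right multiplication by $\nu(g)^{\pm\frac12}\in\cA_\theta$ define zeroth-order \psidos\ with constant (in $\xi$) principal symbol $\nu(g)^{\pm\frac12}$, Corollary~\ref{cor:PsiDOs.composition-classical} immediately delivers $\Delta_g^{-s}\in\Psi^{-2s}(\T^n_\theta)$ with principal symbol $\nu(g)^{-\frac12}|\xi|_g^{-2s}\nu(g)^{\frac12}$.

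The main obstacle is the cited input from~\cite{LP:Resolvent}: what is really needed is a clean construction of a parameter-dependent \psido\ calculus on NC tori producing a resolvent parametrix for $\cDelta_g-\lambda$ and identifying its leading symbol, from which the classical Seeley-type contour argument yields both membership of $\cDelta_g^{-s}$ in $\Psi^{-2s}(\T^n_\theta)$ and the formula for its principal symbol. Once this input is granted, everything else amounts to the finite-dimensional kernel bookkeeping and a routine use of the symbolic calculus.
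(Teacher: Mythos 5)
Your proof is correct and follows essentially the same route as the paper's: invoke the complex-powers result~\cite[Theorem~7.3]{LP:Resolvent} to place $\cDelta_g^{-s}$ in $\Psi^{-2s}(\T_\theta^n)$ with principal symbol $|\xi|_g^{-2s}$, then conjugate by $\nu(g)^{\pm\frac12}$ and apply Corollary~\ref{cor:PsiDOs.composition-classical} to obtain the statement for $\Delta_g^{-s}$. The one genuine difference is your explicit kernel bookkeeping via the finite-rank projection $\Pi_0$ onto $\ker\cDelta_g=\C\cdot\nu(g)^{\frac12}$: you pass to the strictly positive operator $\cDelta_g+\Pi_0$, apply the cited theorem there, and then note that $\cDelta_g^{-s}-(\cDelta_g+\Pi_0)^{-s}=-\Pi_0$ is smoothing (it is rank one with range in $\cA_\theta$). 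The paper skips this step because Theorem~7.3 of~\cite{LP:Resolvent} is stated under hypotheses met directly by $\cDelta_g$ --- essentially selfadjoint, non-negative spectrum, principal symbol positive and invertible for $\xi\neq0$ --- with the kernel handled internally by the convention that $\cDelta_g^{-s}$ vanishes on $\ker\cDelta_g$. So your version is slightly more conservative than necessary but buys robustness: it would remain valid even if the cited theorem demanded strict invertibility. Everything else, including the telescoping argument for the principal symbol of $\cDelta_g$ (which works because the scalar variables $\xi_i$ commute with $\cA_\theta$), matches the paper.
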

\begin{proof}
 As mentioned above, $\cDelta_g$ is a 2nd order elliptic differential operator. It is essentially selfadjoint with non-negative spectrum on $\cH_\theta$ and its principal symbol $|\xi|_g$ is positive and invertible for $\xi\neq 0$. Therefore, it satisfies the assumptions of~\cite[Theorem~7.3]{LP:Resolvent}, and hence $\cDelta_g^{-s}$ is an operator in $\Psi^{2s}(\T_\theta^n)$ with principal symbol $|\xi|_g^{-2s}$. As $\Delta_g^{-s}=\nu(g)^{-\frac12}\cDelta_g^{-s} \nu(g)^{\frac12}$, it then follows from Corollary~\ref{cor:PsiDOs.composition-classical} that $\Delta_g^{-s}$ is in $\Psi^{2s}(\T_\theta^n)$ and has principal symbol $\nu(g)^{-\frac12} |\xi|_g^{-2s} \nu(g)^{\frac12}$. The proof is complete. 
\end{proof}

\begin{remark}
The above result actually holds for any $s\in \C$ (see~\cite[\S7]{LP:Resolvent}). When $s\in \Z_{-}$ the result is immediate, since $\cDelta_g^{-s}$ and $\Delta_g^{-s}$ then are positive-integer powers of differential operators. When $s\in \N$ this also can be deduced from the case $s\in \Z_{-}$ and the results of~\cite[\S11]{HLP:Part2}, since $\Delta_g^{-s}$ and $\cDelta_g^{-s}$ are parametrices of the differential operators $\Delta_g^{s}$ and $\cDelta_g^{s}$, respectively. 
\end{remark}

\begin{remark}
In the case of a conformally flat metric $g_{ij}=k^2\delta_{ij}$, $k\in \cA_\theta^{++}$,  by using~(\ref{eq:Riemannian.Laplacian-conformally-flat}) we get
\begin{align*}
 \cDelta_g & = k^{\frac{n}2}\big(k^{-2} \Delta + \sum_{1\leq i \leq n} k^{-n}  \delta_i(k^{n-2}) \delta_i\big)k^{-\frac{n}2}\\ 
 & = k^{\frac{n}2-2}\Delta k^{-\frac{n}2} + \sum_{1\leq i \leq n} k^{-\frac{n}2}\big(\delta_i(k^{n-2}) \delta_i\big)k^{-\frac{n}2}. 
\end{align*}
In particular, when $n=2$ we recover the conformally perturbed Laplacian $k^{-1}\Delta k^{-1}$ of~\cite{CT:Baltimore11}. 
\end{remark}

We are now in a position to  prove the main result of this section. 

\begin{theorem}[Curved Integration Formula]\label{thm:Curved.integration-formula}
For every $a\in A_\theta$, the operator $a^\circ \Delta_g^{-\frac{n}2}$ is strongly measurable, and we have
\begin{equation}
 \bint a^\circ \Delta_g^{-\frac{n}2} = \bcn \tau\big[a\tilde{\nu}(g)\big], \qquad \bcn:=\frac{1}{n}|\bS^{n-1}|.
 \label{eq:curved.curved-formula} 
\end{equation}
\end{theorem}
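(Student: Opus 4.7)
The plan is to transfer the problem from the GNS Hilbert space $\cH_g^\circ$ to the standard one $\cH_\theta$ via the unitary isomorphism $W:=(2\pi)^{\frac{n}2}\nu(g)^{\frac12}:\cH_g^\circ\rightarrow\cH_\theta$ mentioned in Section~\ref{sec:Curved-Integration}, and then invoke Corollary~\ref{cor:Trace-thm.super-integration-formula} (i.e., Connes' trace theorem for \psidos\ times an algebra element). Since traces on $\scL^{1,\infty}$ and the weak trace class itself are defined purely at the Hilbert space level, conjugation by $W$ preserves strong measurability and the value of the noncommutative integral, so it suffices to compute on $\cH_\theta$.

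First I would unwind the definitions: $a^\circ=\nu(g)^{-\frac12}a\nu(g)^{\frac12}$ and $\cDelta_g=\nu(g)^{\frac12}\Delta_g\nu(g)^{-\frac12}$ (as in~(\ref{eq:Curved.Laplacian-prime})) give, by Borel functional calculus, $\cDelta_g^{-\frac{n}2}=\nu(g)^{\frac12}\Delta_g^{-\frac{n}2}\nu(g)^{-\frac12}$. A direct computation then yields
\begin{equation*}
W\bigl(a^\circ\Delta_g^{-\frac{n}2}\bigr)W^{-1}=a\,\cDelta_g^{-\frac{n}2}\qquad\text{as bounded operators on }\cH_\theta.
\end{equation*}
Thus $a^\circ\Delta_g^{-\frac{n}2}$ is strongly measurable on $\cH_g^\circ$ if and only if $a\cDelta_g^{-\frac{n}2}$ is strongly measurable on $\cH_\theta$, and the two noncommutative integrals coincide.

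Next I would apply Lemma~\ref{lem:Curved.symbol-Deltagn}: it tells us that $\cDelta_g^{-\frac{n}2}$ is a \psido\ in $\Psi^{-n}(\T^n_\theta)$ with principal symbol $|\xi|_g^{-n}$. Therefore Corollary~\ref{cor:Trace-thm.super-integration-formula} applies to $P=\cDelta_g^{-\frac{n}2}$ and gives, for every $a\in A_\theta$, that $a\cDelta_g^{-\frac{n}2}$ is strongly measurable and
\begin{equation*}
\bint a\,\cDelta_g^{-\frac{n}2}=\frac{1}{n}\tau\bigl[a\,c_{\cDelta_g^{-\frac{n}2}}\bigr],\qquad c_{\cDelta_g^{-\frac{n}2}}=\int_{\bS^{n-1}}|\xi|_g^{-n}\,d^{n-1}\xi=|\bS^{n-1}|\,\tilde{\nu}(g),
\end{equation*}
where the last equality is the very definition of $\tilde{\nu}(g)$. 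Combining this with the previous step yields~(\ref{eq:curved.curved-formula}) with $\bcn=\frac{1}{n}|\bS^{n-1}|$.

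The only delicate point is the first step, namely the invariance of strong measurability and the NC integral under the (non-trace-preserving) unitary $W$ between two distinct GNS Hilbert spaces; once one accepts that traces and the ideal $\scL^{1,\infty}$ transport along arbitrary unitary isomorphisms, the rest reduces to an algebraic identity and a direct application of the flat trace theorem. Everything else---ellipticity and symbol of $\cDelta_g^{-\frac{n}2}$, the identification of $c_P$ with $|\bS^{n-1}|\tilde{\nu}(g)$---has already been set up in the preceding sections.
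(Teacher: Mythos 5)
Your proposal is correct and follows essentially the same route as the paper: reduce to $a\cDelta_g^{-\frac{n}2}$ on $\cH_\theta$, invoke Lemma~\ref{lem:Curved.symbol-Deltagn} to identify the principal symbol $|\xi|_g^{-n}$, and apply Corollary~\ref{cor:Trace-thm.super-integration-formula}. The only presentational difference is in the first step: you transfer via the explicit unitary $W=(2\pi)^{\frac{n}{2}}\nu(g)^{\frac12}:\cH_g^\circ\rightarrow\cH_\theta$ and appeal to unitary invariance of $\scL^{1,\infty}$ and of positive normalized traces, whereas the paper works with a fixed positive trace $\varphi$ and obtains the same equality $\varphi(a^\circ\Delta_g^{-\frac{n}2})=\varphi(a\cDelta_g^{-\frac{n}2})$ by the trace property $\varphi(AT)=\varphi(TA)$ applied to $A=\nu(g)^{-\frac12}$; the two devices are equivalent and yield the same conclusion. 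As for the ``delicate point'' you flag, it is indeed standard that unitary conjugation preserves $\scL^{1,\infty}$, positivity, the normalization, and the value of every trace (this last holds even for a mere similarity $S T S^{-1}$ since $STS^{-1}-T=[S,TS^{-1}]\in\Com(\scL^{1,\infty})$), so there is no genuine gap.
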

\begin{proof}
 We know by Lemma~\ref{lem:Curved.symbol-Deltagn} that $\Delta_g^{-\frac{n}2}$ and $\cDelta_g^{-\frac{n}2}$ are operators in $\Psi^{-n}(\T_\theta^n)$.  Let $a\in A_\theta$. For every positive trace $\varphi$ on $\cL^{1,\infty}$, we have
 \begin{equation}
 \varphi\big(a^\circ \Delta_g^{-\frac{n}2}\big)= \varphi\left[\big(\nu(g)^{-\frac12}a \nu(g)^{\frac12}\big)\Delta_g^{-\frac{n}2}\right] =
  \varphi\left[a   \big(\nu(g)^{\frac12}\big)\Delta_g^{-\frac{n}2}\nu(g)^{-\frac12}\big)\right]=\varphi\big(a \cDelta_g^{-\frac{n}2}\big). 
  \label{eq:curved.trace}
\end{equation}
 Combining this with Corollary~\ref{cor:Trace-thm.super-integration-formula} we deduce that $a^\circ \Delta_g^{-\frac{n}2}$ is strongly measurable, and we have
\begin{equation}
 \bint a^\circ \Delta_g^{-\frac{n}2} = \bint a \cDelta_g^{-\frac{n}2} = \frac{1}{n}\tau\big[ac_{\cDelta_g^{-\frac{n}2}}\big]. 
 \label{eq:curved.binta} 
\end{equation}
 By Lemma~\ref{lem:Curved.symbol-Deltagn} the principal symbol of $\cDelta_g^{-\frac{n}2}$ is $|\xi|_g^{-n}$,  and so we have 
\begin{equation*}
 c_{\cDelta_g^{-\frac{n}2}} =\int_{\bS^{n-1}} |\xi|_g^{-n}d^{n-1}\xi = |\bS^{n-1}|\tilde{\nu}(g)=n\bcn\tilde{\nu}(g). 
\end{equation*}
 Combining this with~(\ref{eq:curved.binta}) completes the proof. 
\end{proof}

\begin{remark}
In the definition~(\ref{eq:Riemannian.Laplace-Beltrami}) of the Laplace-Beltrami operator we may replace $\nu(g)$ by any smooth density $\nu\in \cA_\theta^{++}$. The resulting operator, denoted by $\Delta_{g,\nu}$ in~\cite{HP:Laplacian}, is essentially selfadjoint on the Hilbert space $\sH_\nu^\circ$ of the GNS representation of $A_\theta^\circ$ with respect to the weight $\varphi_\nu(a)=\tau[a\nu]$, $a\in A_\theta$ (see~\cite{HP:Laplacian}). The $*$-representation $a\rightarrow a^\circ$  of $A_\theta$ in $\sH_\nu^\circ$ is given by 
 $a^\circ=\nu^{-\frac12}a\nu^{\frac12}$. With these modifications we have 
\begin{equation*}
 \bint a^\circ \Delta_{g, \nu}^{-\frac{n}2} = \bcn \tau\big[ a \tilde{\nu}(g)\big]  \qquad \forall a \in A_\theta. 
\end{equation*}
\end{remark}

By Proposition~\ref{prop:Curved.properties-tnug} the spectral Riemannian density $\tilde{\nu}(g)$ agrees with the Riemannian density $\nu(g)$ when $g$ is  self-compatible. Therefore, in this case we obtain the following result. 

\begin{corollary}\label{cor:Curved.integration-formula-selfcompatible}
Assume that $g$ is a self-compatible Riemannian metric. Then, for all $a\in A_\theta$, we have
 \begin{equation}
 \bint a^\circ \Delta_g^{-\frac{n}2} = \bcn \tau\big[a\nu(g)\big] =c_n\varphi_g(a), 
 \label{eq:Curved.integration-formula-selfcompatible} 
\end{equation}
where $c_n=(2\pi)^{-n}\bcn$ is the same constant as in~(\ref{eq:Quantized.integration-formula}). In particular, for $a=1$ we get 
\begin{equation}
 \bint \Delta_g^{-\frac{n}2}=c_n\Vol_g(\T^n_\theta).
 \label{eq:Curved.selfcompatible-volume}  
\end{equation}
\end{corollary}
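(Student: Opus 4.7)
The plan is to derive the corollary as an immediate specialization of the curved integration formula of Theorem~\ref{thm:Curved.integration-formula} combined with the identification of the spectral and ordinary Riemannian densities in the self-compatible case. Since all the hard analytic work (the strong measurability of $a^\circ \Delta_g^{-n/2}$ and the formula for its NC integral in terms of $\tilde\nu(g)$) is already packaged into Theorem~\ref{thm:Curved.integration-formula}, and the structural result $\tilde\nu(g)=\nu(g)$ under self-compatibility is part~(4) of Proposition~\ref{prop:Curved.properties-tnug}, there is nothing non-trivial left to do; the proof is a matter of assembling three pieces.

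First I would invoke Theorem~\ref{thm:Curved.integration-formula} to obtain the strong measurability of $a^\circ\Delta_g^{-n/2}$ for every $a\in A_\theta$, together with
\begin{equation*}
 \bint a^\circ \Delta_g^{-\tfrac{n}{2}} = \bcn\, \tau\!\left[a\,\tilde\nu(g)\right].
\end{equation*}
Next, since $g$ is self-compatible, part~(4) of Proposition~\ref{prop:Curved.properties-tnug} yields $\tilde\nu(g)=\nu(g)$, and substituting this identity into the right-hand side gives the first equality in~(\ref{eq:Curved.integration-formula-selfcompatible}). Finally, using the definition $\varphi_g(a)=(2\pi)^n\tau[a\nu(g)]$ together with the relation $c_n=(2\pi)^{-n}\bcn$ between the normalization constants, one has $\bcn\tau[a\nu(g)]=(2\pi)^{-n}\bcn\,\varphi_g(a)=c_n\varphi_g(a)$, which produces the second equality.

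For the particular case $a=1$, applying the formula and using $\varphi_g(1)=\Vol_g(\T^n_\theta)$ by Definition~\ref{def:Riemannian.volume} immediately yields~(\ref{eq:Curved.selfcompatible-volume}). There is no real obstacle here: the only conceptual step is recognizing that the \emph{a priori} distinct densities $\nu(g)$ and $\tilde\nu(g)$ coincide precisely in the self-compatible situation, and this has already been established. Consequently the proof amounts to a one-line chain of substitutions, with the remaining content being essentially bookkeeping of the two constants $\bcn$ and $c_n$.
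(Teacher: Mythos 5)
Your proof is correct and follows exactly the paper's route: specialize Theorem~\ref{thm:Curved.integration-formula}, substitute $\tilde{\nu}(g)=\nu(g)$ from Proposition~\ref{prop:Curved.properties-tnug}(4), and unwind the definitions of $\varphi_g$ and the constants. Nothing is missing.
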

 
\begin{remark}
 When $\theta=0$ every Riemannian metric is self-compatible. We then recover the integration formula~(\ref{eq:Quantized.integration-formula}) for arbitrary Riemannian metrics on the ordinary torus $\T^n$. 
\end{remark}

\begin{remark}
 The formula~(\ref{eq:Curved.integration-formula-selfcompatible}) shows that, in the self-compatible case, the NC integral allows us to recover the Riemannian density $\nu(g)$ and the Riemannian weight $\varphi_g$. As mentioned in the introduction this provides us with a nice link between the notions of integrals in NCG and the noncommutative measure theory in operator algebras, where the role of Radon measures is played by weights on $C^*$-algebras.  
 \end{remark}
 
\begin{remark}\label{rmk:curved.uniqueness}
 In order to recover the Riemannian  density $\nu(g)$ it is enough to have~(\ref{eq:Curved.integration-formula-selfcompatible}) for elements $a$ of the smooth algebra $\cA_\theta$. Indeed, by using the Fourier series 
 expansion $\nu(g)=\sum \nu_k(g)U^k$ in $\cA_\theta$, we get
\begin{equation*}
  \nu_k(g) =\scal{\nu(g)}{U^k}= \tau\big[(U^k)^*\nu(g)\big]=\bcn^{-1} \bint \big[(U^k)^{-1}]^{\circ} \Delta_g^{-n}. 
\end{equation*}
This confirms our claim. 
 \end{remark}

\subsection{Lower dimensional volumes} 
In the same way as in the case of ordinary manifolds, the trace formula~(\ref{eq:Trace-Thm.trace-formula}) allows us to extend the NC integral to all operators in $\Psi^\Z(\T_\theta^n)$, including those operators that are not weak trace class or even bounded. 

\begin{definition}
For every $P\in \Psi^m(\cA_\theta)$, $m\in \Z$, its NC integral is defined by
\begin{equation*}
 \bint P : = \frac{1}{n} \Res (P). 
\end{equation*} 
\end{definition}

When the metric $g$ is self-compatible the integration formula~(\ref{eq:Curved.integration-formula-selfcompatible}) shows that the NC integral recaptures the Riemannian density. For general Riemannian metrics, it recaptures the density $\tilde{\nu}(g)$, and so it is natural to call the latter the \emph{spectral Riemannian density} of $(\T^n_\theta, g)$.

Along the same lines of thought as that of Section~\ref{sec:Quantized}, we can regard the operator $ c_n^{-1} \Delta_g^{-\frac{n}2}$ as the \emph{quantum volume element}, and then we define the \emph{quantum length element} as the operator, 
\begin{equation*}
 ds:=\big(c_n^{-1} \Delta_g^{-\frac{n}2}\big)^{\frac{1}{n}}=c_n^{-\frac1{n}} \Delta_g^{-\frac{1}2}. 
\end{equation*}
Note that Lemma~\ref{lem:Curved.symbol-Deltagn} ensures us that $ds$ is an operator in $\Psi^{-1}(\T^n_\theta)$, and hence is an infinitesimal operator of order~$\frac{1}{n}$ by Proposition~\ref{prop:continuity-weak-Schatten-class}. 
In particular, we can define the NC integrals of the powers $ds^k$, $k=1,\ldots,n$. This leads us to the following definition of the lower dimensional \emph{spectral} volumes of curved NC tori. 

\begin{definition}
 For $k=1,\ldots, n$, the \emph{$k$-dimensional spectral volume} of $(\T^n_\theta, g)$ is
\begin{equation*}
 \widetilde{\Vol}^{(k)}_g(\T^n_\theta):=\bint ds^k. 
\end{equation*}
\end{definition}

\begin{remark}
 When the Riemannian metric $g$ is self-compatible, by using~(\ref{eq:Curved.selfcompatible-volume}) we get
\begin{equation*}
  \widetilde{\Vol}^{(n)}_g(\T^n_\theta)=\bint ds^n = c_n^{-1}\bint \Delta_g^{-\frac{n}{2}} = \Vol_g(\T^n_\theta). 
\end{equation*}
Thus, in this case the spectral volume $ \widetilde{\Vol}^{(n)}_g(\T^n_\theta)$ agrees with the Riemannian volume in the sense of Definition~\ref{def:Riemannian.volume}. 
\end{remark}

\section{Scalar Curvature of Curved NC Tori}\label{sec:curvature}  
In this section, we explain how the results of the previous section enables us to setup a natural notion of scalar curvature for general curved NC tori.  

As mentioned in Section~\ref{sec:Quantized}, for any (compact) Riemannian manifold $(M^n,g)$, the functional $C^\infty(M)\ni a\rightarrow -\frac{1}{c_n'}\bint a ds^{n-2}$ recaptures the (dressed) scalar curvature $\kappa(g)\nu(g)$. For curved NC tori $(\T^n_\theta,g)$ this leads us to the following definition. 

\begin{definition}\label{def:curvature.bint}
Assume $n\geq 3$, and let $g\in \GL_n(\cA_\theta^\R)$ be a Riemannian metric on $\T_\theta^n$. The \emph{scalar curvature functional} of $(\T^n_\theta,g)$ is the functional $\cR_g:\cA_\theta \rightarrow \C$ given by
\begin{equation*}
 \cR_g(a)=-\frac{1}{c_n'} \bint  a^\circ ds^{n-2}, \qquad a\in \cA_\theta. 
\end{equation*}
\end{definition}
 
 \begin{remark}
This definition is consistent with the definition of the scalar curvature functional of spectral triples~\cite[Definition~1.147]{CM:AMS08} (see also~\cite[Remark~5.4]{CM:JAMS14}) and with the intrinsic definition of the modular curvature of conformally flat NC 2-tori~\cite[\S5.2]{CM:JAMS14} (see also~\cite{FK:JNCG15}). 
\end{remark}
 
 In what follows we assume $n\geq 3$ and let $g\in \GL_n(\cA_\theta^\R)$ be a Riemannian metric on $\T_\theta^n$. 
 
\begin{proposition}\label{prop:curved.kappag}
 There is a unique $\kappa(g)\in \cA_\theta$ such that 
 \begin{equation}
 \cR_g(a) = \tau\big[ a \kappa(g)\big] \qquad \forall a\in \cA_\theta. 
 \label{eq:curvature.functional-kappa} 
\end{equation}
Namely, we have
\begin{equation}
\kappa(g)=-3(4\pi)^{\frac{n}2} \Gamma\big(\frac{n}2-1\big)c_{\hat{\Delta}_g^{-\frac{n}2+1}}.
\label{eq:curvature.scalar-curvature}
\end{equation}
\end{proposition}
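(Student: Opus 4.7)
The plan is to unwind Definition~\ref{def:curvature.bint} using the identity $ds^{n-2}=c_n^{-(n-2)/n}\Delta_g^{-(n-2)/2}$, convert the NC integral of a $\psi$DO of integer order $-(n-2)$ into a noncommutative residue, and then use the trace property of $\Res$ to absorb the conjugation by $\nu(g)^{\pm 1/2}$. Since $n\geq 3$, the order $-(n-2)$ is a non-positive integer, so $\Delta_g^{-(n-2)/2}$ and $\cDelta_g^{-(n-2)/2}$ are in $\Psi^{2-n}(\T_\theta^n)$ by Lemma~\ref{lem:Curved.symbol-Deltagn}, and hence the extended NC integral of Definition~\ref{def:curvature.bint} applies.

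First I would write, using $a^\circ=\nu(g)^{-\frac12}a\nu(g)^{\frac12}$ and $\Delta_g^{-s}=\nu(g)^{-\frac12}\cDelta_g^{-s}\nu(g)^{\frac12}$ (from~(\ref{eq:Curved.Laplacian-prime})),
\begin{equation*}
\cR_g(a)=-\frac{1}{c_n'}\bint a^\circ ds^{n-2}=-\frac{c_n^{-(n-2)/n}}{n\,c_n'}\,\Res\!\left(a^\circ\Delta_g^{-(n-2)/2}\right),
\end{equation*}
and then observe that
\begin{equation*}
a^\circ\Delta_g^{-(n-2)/2}=\nu(g)^{-\frac12}\bigl(a\,\cDelta_g^{-(n-2)/2}\nu(g)^{\frac12}\bigr).
\end{equation*}
Since $\nu(g)^{-1/2}\in\cA_\theta\subset\Psi^{0}(\T_\theta^n)$ and $a\,\cDelta_g^{-(n-2)/2}\nu(g)^{\frac12}\in\Psi^{2-n}(\T_\theta^n)$, their orders sum to an integer, so by the trace property of $\Res$ (Proposition~\ref{Prop2}) we may cycle $\nu(g)^{-\frac12}$ to the right and cancel it against $\nu(g)^{\frac12}$, yielding
\begin{equation*}
\Res\!\left(a^\circ\Delta_g^{-(n-2)/2}\right)=\Res\!\left(a\,\cDelta_g^{-(n-2)/2}\right)=\tau\!\left[a\,c_{\cDelta_g^{-(n-2)/2}}\right],
\end{equation*}
where the second equality is the local formula~(\ref{eq:NCR.local}). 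This gives~(\ref{eq:curvature.functional-kappa}) with $\kappa(g)=-(nc_n')^{-1}c_n^{-(n-2)/n}c_{\cDelta_g^{-(n-2)/2}}$; a short computation using the explicit value~(\ref{eq:quantized.cn'}) of $c_n'$, namely $\frac{1}{nc_n'}c_n^{-(n-2)/n}=3(4\pi)^{n/2}\Gamma(n/2-1)$, then delivers the formula~(\ref{eq:curvature.scalar-curvature}).

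Uniqueness of $\kappa(g)$ is immediate: the sesquilinear form $(a,b)\mapsto\tau(ab^*)$ is non-degenerate on $\cA_\theta$ (the Fourier coefficients satisfy $\tau[(U^k)^*\kappa]=\kappa_k$), so $\tau[a\,\kappa(g)]=\tau[a\,\kappa'(g)]$ for every $a\in\cA_\theta$ forces $\kappa(g)=\kappa'(g)$. There is no real obstacle in the argument; the only subtlety is verifying that the orders of the operators match the hypothesis of Proposition~\ref{Prop2} before cycling $\nu(g)^{\pm 1/2}$, which is indeed the case here since $\nu(g)^{\pm 1/2}\in\Psi^0(\T_\theta^n)$ and the remaining factor lies in $\Psi^{2-n}(\T_\theta^n)$.
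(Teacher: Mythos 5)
Your proof is correct and follows the same route as the paper: unwind $ds^{n-2}$ into a power of $\Delta_g$, pass from the NC integral to $\Res$, use the trace property of $\Res$ (Proposition~\ref{Prop2}) to absorb the conjugation by $\nu(g)^{\pm 1/2}$ and replace $a^\circ\Delta_g^{-\frac{n}{2}+1}$ by $a\cDelta_g^{-\frac{n}{2}+1}$, then apply the local formula~(\ref{eq:NCR.local}) and the constant identity from~(\ref{eq:quantized.cn'}), with uniqueness via Fourier coefficients exactly as in Remark~\ref{rmk:curved.uniqueness}. Your careful check that the orders $0$ and $2-n$ sum to an integer before invoking Proposition~\ref{Prop2} is a welcome explicit justification of the step that the paper compresses into ``arguing as in~(\ref{eq:curved.trace})''.
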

\begin{proof}
 Let $a \in \cA_\theta$. In view of the definitions of $ds$ and $\bint$, we have
\begin{equation*}
 \cR_g(a)=-\frac{1}{c_n'} \bint  a^\circ ds^{n-2} =  -\frac{1}{c_n'}c_n^{-\frac{n-2}{n}} \bint a^\circ \Delta_g^{-\frac{n}2+1} = 
 -\frac{1}{nc_n'}c_n^{-\frac{n-2}{n}}  \Res \big( a^\circ \Delta_g^{-\frac{n}2+1}\big). 
\end{equation*}
By arguing as in~(\ref{eq:curved.trace}) and using~(\ref{eq:NCR.local}) we get 
 \begin{equation*}
 \Res \big( a^\circ \Delta_g^{-\frac{n}2+1}\big)=   \Res \big( a \cDelta_g^{-\frac{n}2+1}\big)=  \tau\big[ a c_{\hat{\Delta}_g^{-\frac{n}2+1}}\big].  
\end{equation*}
Moreover, it follows from~(\ref{eq:quantized.cn'}) that $\frac{1}{nc_n'}c_n^{-\frac{n-2}{n}}=12(4\pi)^{\frac{n}2}\Gamma(\frac{n}2-1)$. 
Thus, 
\begin{equation*}
 \cR_g(a)=-3(4\pi)^{\frac{n}2}\Gamma\big(\frac{n}2-1\big) \tau\big[ a c_{\hat{\Delta}_g^{-\frac{n}2+1}}\big]. 
   \label{eq:curvature.bintadsn-2}
\end{equation*}
That is,~(\ref{eq:curvature.functional-kappa}) holds with $\kappa(g)$ given by~(\ref{eq:curvature.scalar-curvature}). 

In the same way as in Remark~\ref{rmk:curved.uniqueness}, the formula~(\ref{eq:curvature.functional-kappa}) uniquely determines the Fourier coefficients of $\nu(g)$, and so $\nu(g)$ is unique. The proof is complete. 
\end{proof}

\begin{definition}
$\nu(g)$  is  the \emph{scalar curvature} of $(\T_\theta^n, g)$. 
\end{definition}

\begin{remark}
 When $\theta=0$ we recover the usual notion of scalar curvature for Riemannian metrics on the ordinary torus $\T^n$.  
\end{remark}

\begin{remark}
 In dimension $n=4$ and for conformally flat metrics we recover the definition of the scalar curvature of conformally flat metrics in~\cite{Fa:JMP15, FK:JNCG15} with a different sign convention and up to the factor $6(4\pi)^{\frac{n}2}\Gamma(\frac{n}2-1)=6(4\pi)^2$. Note that the factor $\Gamma(\frac{n}2-1)$ in~(\ref{eq:curvature.bintadsn-2}) is essential to relate $\kappa(g)$ to the symbol of the resolvent and perform the analytic continuation at $n=2$ (see Proposition~\ref{prop:curved.curvature-sigma4} and Remark~\ref{sec:curvature.2D}). 
\end{remark}

The scalar curvature of conformally flat tori is often expressed in terms of the subleading coefficient in the short time heat trace asymptotics (see, e.g.,~\cite{CM:JAMS14, CT:Baltimore11}). This heat coefficient is given by multi-integrals involving the 2nd subleading symbol $\sigma_{-4}(\xi;\lambda)$ of the resolvent $(\cDelta_g-\lambda)^{-1}$. We shall now explain how to derive a similar expression from the formula~(\ref{eq:curvature.scalar-curvature}). 

We need to determine the symbol of degree~$-n$ of $\cDelta_g^{-\frac{n}2+1}$. Note that as $\cDelta_g^{-\frac{n}2+1}$ has order $-n+2$ its symbol of degree~$-n$ agrees with its 2nd subleading symbol. Let $s>0$. We have
\begin{equation*}
 \cDelta_g^{-s} = \frac{1}{2i\pi} \int_\Gamma \lambda^{-s} (\cDelta_g-\lambda)^{-1}d\lambda,
\end{equation*}
where the integral converges in $\cL(\cH_\theta)$ and $\Gamma$ is a downward oriented contour of the form $\Gamma =\partial \Lambda(r)$, with
$\Lambda(r):= \left\{\lambda \in \C^*; \ \Re \lambda\leq 0 \ \text{or} \ |\lambda|\leq r\right\}$ and $0<r<\inf\Sp(\cDelta_g))$ (see~\cite{LP:Resolvent}). It is shown in~\cite{LP:Resolvent} that $(\cDelta_g-\lambda)^{-1}$ belongs to a suitable class of \psidos\ with parameter on $\T_\theta^n$. In particular, it has a symbol $\sigma(\xi;\lambda)\sim \sum_{j\geq 0} \sigma_{-2-j}(\xi;\lambda)$, where
\begin{gather*}
 \sigma_{-2}(\xi;\lambda) = \big(|\xi|_g^2-\lambda \big)^{-1},\\
 \sigma_{-2-j}(\xi;\lambda) = - \sum_{\substack{k+l+|\alpha|=j\\ l<j}} \frac{1}{\alpha!}  \big(|\xi|_g^2-\lambda \big)^{-1} \partial_\xi^\alpha p_{2-k}(\xi)\delta^\alpha \sigma_{-2-l}(\xi;\lambda), \quad j\geq 0. 
\end{gather*}
Here $p(\xi)=p_2(\xi)+p_1(\xi)+p_0$ is the symbol of $\cDelta_g$ with $p_2(\xi)=|\xi|_g^2$. We have the following refinement of Lemma~\ref{lem:Curved.symbol-Deltagn}. 

\begin{lemma}[{\cite[Theorem~7.3]{LP:Resolvent}}] 
 Let $s<0$. The operator $\cDelta_g^{-s}$ has symbol $\rho(s;\xi)\sim \sum_{j\geq 0} \rho_{-2s-j}(s;\xi)$, where 
\begin{equation}
 \rho_{-2s-j}(s;\xi)= \frac{1}{2i\pi} \int_{\gamma_\xi} \lambda^{-s} \sigma_{-2-j}(\xi;\lambda)d\lambda, \qquad \xi\neq 0.
 \label{eq:scalar.symbol-powers}
\end{equation}
Here $\gamma_\xi$ is any clockwise-oriented Jordan curve in $\C\setminus (-\infty,0]$ that encloses $\Sp(|\xi|_g^2)$. 
\end{lemma}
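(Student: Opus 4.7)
The plan is to combine the Cauchy-type contour integral representation of $\cDelta_g^{-s}$ displayed just above the lemma with the parameter-dependent symbol expansion of the resolvent $(\cDelta_g-\lambda)^{-1}$ that has already been recorded. Formally one substitutes the expansion $\sigma(\xi;\lambda)\sim \sum_{j\geq 0}\sigma_{-2-j}(\xi;\lambda)$ into
\begin{equation*}
\cDelta_g^{-s} = \frac{1}{2i\pi}\int_\Gamma \lambda^{-s}(\cDelta_g-\lambda)^{-1}d\lambda
\end{equation*}
and integrates termwise; the substance of the proof is to justify that this procedure produces a genuine classical symbol of order $-2s$ on $\T^n_\theta$ with the stated homogeneous components~\eqref{eq:scalar.symbol-powers}.

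The first step is to reduce the Hankel-type contour $\Gamma$ to the contour $\gamma_\xi$ used in the statement. For each fixed $\xi\neq 0$, the element $|\xi|_g^2\in \cA_\theta^{++}$ has spectrum enclosed by $\gamma_\xi$, and by the estimate~\eqref{eq:Laplacian.positivity-normg} one may deform $\Gamma$ into $\gamma_\xi$ uniformly on compact subsets of $\R^n\setminus 0$, the contribution of the contour near infinity producing only a smoothing tail. Applied to the leading term $\sigma_{-2}(\xi;\lambda)=(|\xi|_g^2-\lambda)^{-1}$, the Riesz--Dunford functional calculus for the positive invertible element $|\xi|_g^2$ gives $\frac{1}{2i\pi}\int_{\gamma_\xi}\lambda^{-s}(|\xi|_g^2-\lambda)^{-1}d\lambda=|\xi|_g^{-2s}$, which recovers the expected principal symbol $\rho_{-2s}(s;\xi)=|\xi|_g^{-2s}$ from Lemma~\ref{lem:Curved.symbol-Deltagn}.

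Next I would establish the homogeneity of the lower-order terms by induction. The recursion for $\sigma_{-2-j}(\xi;\lambda)$ together with the homogeneity of $p_2(\xi)=|\xi|_g^2$ and of each $p_{2-k}(\xi)$ shows that $\sigma_{-2-j}(t\xi;t^2\lambda)=t^{-2-j}\sigma_{-2-j}(\xi;\lambda)$ for $t>0$. Performing the change of variable $\lambda\to t^2\lambda$ in~\eqref{eq:scalar.symbol-powers} (and adjusting $\gamma_\xi$ accordingly, which is permitted by holomorphy) then yields $\rho_{-2s-j}(s;t\xi)=t^{-2s-j}\rho_{-2s-j}(s;\xi)$, i.e.\ $\rho_{-2s-j}(s;\cdot)\in S_{-2s-j}(\R^n;\cA_\theta)$ as required by Definition~\ref{def:Symbols.classicalsymbols}.

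The main obstacle is proving the asymptotic expansion in the symbol topology. For this I would invoke the parameter-dependent symbol estimates of~\cite{LP:Resolvent}: the remainder $\sigma(\xi;\lambda)-\sum_{j<N}\sigma_{-2-j}(\xi;\lambda)$ satisfies, for each multi-order $\alpha,\beta$,
\begin{equation*}
\bigl\|\delta^\alpha\partial_\xi^\beta\bigl(\sigma-\textstyle\sum_{j<N}\sigma_{-2-j}\bigr)(\xi;\lambda)\bigr\|\leq C_{N\alpha\beta}\bigl(|\xi|+|\lambda|^{1/2}\bigr)^{-2-N-|\beta|}\bigl(1+|\lambda|^{-1}\bigr),
\end{equation*}
with similar joint control in $\lambda\in\Gamma$. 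Integrating against $\lambda^{-s}$ on $\Gamma$ and using that on $\Gamma$ one has $|\lambda|\lesssim 1+|\xi|^2$ after the deformation to $\gamma_\xi$, the resulting bound gives precisely the estimate~\eqref{eq:Symbols.classical-estimates} of order $-2s-N$ for $\rho(s;\xi)-\sum_{j<N}\rho_{-2s-j}(s;\xi)$. The delicate point is checking that the decay of the remainder in $\lambda$ remains integrable against $\lambda^{-s}$ along the Hankel contour uniformly in $\xi$; once this is secured, continuity of the map $\rho\mapsto P_\rho$ on $\stS^m(\R^n;\cA_\theta)$ (Proposition~\ref{prop:continuity-weak-Schatten-class} and its refinements from~\cite{HLP:Part2}) identifies $\cDelta_g^{-s}$ with $P_{\rho(s;\cdot)}$ modulo a smoothing operator, completing the proof.
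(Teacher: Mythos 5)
The paper gives no proof of this lemma: it is quoted verbatim, with the bracketed label \text{[LP:Resolvent, Theorem~7.3]}, from the companion paper on complex powers, and the ambient discussion uses it as a black box. So there is no in-paper argument to compare against, and your proposal has to be judged on its own.

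Your outline is the standard Seeley-type argument for complex powers, and it is the right strategy. The contour-deformation step, the Riesz--Dunford computation of the leading term $\rho_{-2s}(s;\xi)=|\xi|_g^{-2s}$ (with the clockwise orientation compensating the sign of $(|\xi|_g^2-\lambda)^{-1}$), and the parabolic homogeneity $\sigma_{-2-j}(t\xi;t^2\lambda)=t^{-2-j}\sigma_{-2-j}(\xi;\lambda)$ leading to $\rho_{-2s-j}(s;t\xi)=t^{-2s-j}\rho_{-2s-j}(s;\xi)$ via $\lambda\mapsto t^2\lambda$ are all correct in principle. Two points deserve more care. First, the final identification of $\cDelta_g^{-s}$ with $P_{\rho(s;\cdot)}$ modulo smoothing is not a consequence of the continuity statement in Proposition~\ref{prop:continuity-weak-Schatten-class}, which only controls the $\scL^{p,\infty}$-quasi-norm; what is actually needed is to decompose $(\cDelta_g-\lambda)^{-1}=P_{\sigma(\cdot;\lambda)}+R(\lambda)$ with $R(\lambda)$ smoothing and decaying sufficiently fast along $\Gamma$, interchange the $\lambda$-integral with the quantization map for the symbol part, and absorb $\frac{1}{2i\pi}\int_\Gamma\lambda^{-s}R(\lambda)\,d\lambda$ into the smoothing error. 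Second, the claim that one can integrate the remainder symbol estimate over $\Gamma$ ``using $|\lambda|\lesssim 1+|\xi|^2$ after deformation to $\gamma_\xi$'' conflates the two contours; the clean way is to split the $\Gamma$-integral into $|\lambda|\le c|\xi|^2$ and $|\lambda|>c|\xi|^2$ and estimate each piece, or to first deform to a $\xi$-dependent Jordan curve using the spectral localization $\Sp(|\xi|_g^2)\subset[d_1|\xi|^2,d_2|\xi|^2]$ from~\eqref{eq:Laplacian.positivity-normg}. As a side remark, the hypothesis ``$s<0$'' in the stated lemma is almost certainly a typo for ``$s>0$'': the surrounding text introduces $\cDelta_g^{-s}$ with the Cauchy integral for $s>0$, and the lemma is applied at $s=\tfrac{n}{2}-1>0$.
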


\begin{remark}
 The above result actually holds for any $s\in \C$ (see~\cite{LP:Resolvent}). 
\end{remark}

For $s=\frac{n}2-1$ we obtain the symbol $\rho_{-n}(\frac{n}2-1;\xi)$ by setting $j=2$ in~(\ref{eq:scalar.symbol-powers}). Moreover, it  follows from~(\ref{eq:Laplacian.positivity-normg}) that $\Sp(|\xi|_g^2)\subset [d_1,d_2]$ with $0<d_1<d_2$. In particular, if $\xi\in \bS^{n-1}$, then $\Sp(|\xi|_g^2)\subset [d_1,d_2]$, and so may take $\gamma_\xi=\gamma$, where $\gamma$ is any given clockwise-oriented Jordan curve in $\C\setminus (-\infty,0]$ that encloses $[d_1,d_2]$. We then get
\begin{equation*}
c_{\cDelta_g^{-\frac{n}2+1}}= \int_{\bS^{n-1}} \rho_{-n}(\frac{n}2-1;\xi)d^{n-1}\xi = \frac{1}{2i\pi} \int_{\bS^{n-1}} \int_\gamma \lambda^{-\frac{n}2+1} \sigma_{-4}(\xi;\lambda)d^{n-1}\xi d\lambda .
\end{equation*}
Combining this with~(\ref{eq:curvature.scalar-curvature})  we then arrive at the following result. 

\begin{proposition}\label{prop:curved.curvature-sigma4} 
 Assume $n\geq 3$,  and let $g\in \GL_n(\cA_\theta^\R)$ be a Riemannian metric on $\T_\theta^n$. Then we have
\begin{equation}
 \kappa(g)=-\frac{3}{2i\pi} (4\pi)^{\frac{n}2} \Gamma\big(\frac{n}2-1\big) \int_{\bS^{n-1}} \int_\gamma \lambda^{-\frac{n}2+1} \sigma_{-4}(\xi;\lambda)d^{n-1}\xi d\lambda, 
 \label{eq:curvature.scalar-curvature-symbol}
\end{equation}
where $\sigma_{-4}(\xi;\lambda)$ and $\gamma$ are as above. 
\end{proposition}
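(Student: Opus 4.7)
The plan is to start from the intrinsic formula for $\kappa(g)$ given in Proposition~\ref{prop:curved.kappag}, namely
\[
\kappa(g) = -3(4\pi)^{\frac{n}{2}}\Gamma\big(\tfrac{n}{2}-1\big)\, c_{\cDelta_g^{-\frac{n}{2}+1}},
\]
and compute the coefficient $c_{\cDelta_g^{-\frac{n}{2}+1}}$ via the Cauchy-integral representation of complex powers stated just before the proposition. Recall that $c_{\cDelta_g^{-\frac{n}{2}+1}}=\int_{\bS^{n-1}}\rho_{-n}(\xi)\,d^{n-1}\xi$, where $\rho_{-n}(\xi)$ is the homogeneous component of degree~$-n$ in the symbol of $\cDelta_g^{-\frac{n}{2}+1}$.

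First I would apply the cited lemma with $s=\frac{n}{2}-1$, so that $\cDelta_g^{-s}$ has order $2-n$. Its classical symbol expansion reads $\rho(s;\xi)\sim\sum_{j\ge 0}\rho_{2-n-j}(s;\xi)$, with $\rho_{2-n-j}(s;\xi)=\frac{1}{2i\pi}\int_{\gamma_\xi}\lambda^{-s}\sigma_{-2-j}(\xi;\lambda)\,d\lambda$. Setting $2-n-j=-n$ forces $j=2$, so the sought component is
\[
\rho_{-n}\big(\tfrac{n}{2}-1;\xi\big) = \frac{1}{2i\pi}\int_{\gamma_\xi}\lambda^{-\frac{n}{2}+1}\sigma_{-4}(\xi;\lambda)\,d\lambda.
\]

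Next I would replace the $\xi$-dependent contour $\gamma_\xi$ by the uniform contour $\gamma$: by~(\ref{eq:Laplacian.positivity-normg}) we have $\Sp(|\xi|_g^2)\subset[d_1,d_2]$ for every $\xi\in\bS^{n-1}$, so any clockwise Jordan curve in $\C\setminus(-\infty,0]$ enclosing $[d_1,d_2]$ works for all unit $\xi$ simultaneously. Compactness of $\bS^{n-1}$ and of $\gamma$, together with continuity of $\sigma_{-4}(\xi;\lambda)$ in $(\xi,\lambda)$, then justify Fubini for the double integral:
\[
c_{\cDelta_g^{-\frac{n}{2}+1}}=\int_{\bS^{n-1}}\rho_{-n}\big(\tfrac{n}{2}-1;\xi\big)\,d^{n-1}\xi = \frac{1}{2i\pi}\int_{\bS^{n-1}}\int_\gamma \lambda^{-\frac{n}{2}+1}\sigma_{-4}(\xi;\lambda)\,d\lambda\, d^{n-1}\xi.
\]
Substituting into the formula from Proposition~\ref{prop:curved.kappag} yields~(\ref{eq:curvature.scalar-curvature-symbol}).

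There is no real obstacle: the ingredients (the formula for $\kappa(g)$, the symbol of $\cDelta_g^{-s}$ via the resolvent, and the uniform spectral bound on $|\xi|_g^2$) are all assembled in the paragraphs preceding the proposition. The only care needed is bookkeeping: correctly identifying $j=2$ as the index producing degree $-n$, and checking that the constant $-3(4\pi)^{n/2}\Gamma(\tfrac{n}{2}-1)$ from Proposition~\ref{prop:curved.kappag} carries through unchanged. The hypothesis $n\ge 3$ ensures $s=\frac{n}{2}-1>0$, so both the complex power $\cDelta_g^{-s}$ and the gamma factor are well-defined; the case $n=2$ is singular and must be treated by the analytic continuation procedure alluded to in Remark~\ref{sec:curvature.2D}.
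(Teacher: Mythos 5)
Your argument is correct and follows the paper's own proof essentially line by line: you start from Proposition~\ref{prop:curved.kappag}, identify the degree-$(-n)$ homogeneous symbol of $\cDelta_g^{-\frac{n}{2}+1}$ via the cited Cauchy-integral lemma by solving $-2s-j=-n$ with $s=\frac{n}{2}-1$ to get $j=2$, then use~(\ref{eq:Laplacian.positivity-normg}) to replace $\gamma_\xi$ by a uniform contour $\gamma$ and integrate over $\bS^{n-1}$. The only additions beyond the paper's exposition are the explicit mention of Fubini and of the role of $n\geq 3$, both of which are routine and implicit in the original.
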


\begin{remark}\label{rmk:curvature.heat}
For $\Re \lambda>0$ we have $ \Gamma(\frac{n}2-1)\lambda^{-\frac{n}2+1}=\int_0^\infty t^{\frac{n}2-2} e^{-t\lambda}dt= 2\int_0^\infty t^{n-3} e^{-t^2\lambda}dt$. Inserting this into the integral on the r.h.s.~of~(\ref{eq:curvature.scalar-curvature-symbol}) and using the homogeneity of $\sigma_{-4}(\xi;\lambda)$ we find that
\begin{equation}
  \kappa(g)=-\frac{6}{2i\pi} (4\pi)^{\frac{n}2}  \int_{\R^{n}} \bigg( \int_{\Gamma'} e^{-\lambda} \sigma_{-4}(\xi;\lambda) d\lambda \bigg)d\xi. 
  \label{eq:curvature.scalar-curvature-symbol-heat}
\end{equation}
where $\Gamma'$ is any upward-oriented contour $\{|\arg \lambda|=\phi\}$, $0<\phi <\frac{\pi}2$. This kind of formula for the scalar curvature is not new; it is used by various authors (see, e.g., \cite{CM:JAMS14, CT:Baltimore11, DGK:arXiv18}). The advantage of using the integral in~(\ref{eq:curvature.scalar-curvature-symbol}) with respect to that given in~(\ref{eq:curvature.scalar-curvature-symbol-heat}) is two-fold. First, we get an integration domain that is one-dimension smaller. Second, this domain is bounded, and so we avoid the convergence issues with integrating over unbounded domains in~(\ref{eq:curvature.scalar-curvature-symbol-heat}). 
\end{remark}
 
 \begin{remark}
 We observe from~\cite{LM:GAFA16, SZ:arXiv19} that  
 $(2i\pi)^{-1}\int_{\R^{n}} \big( \int_{\Gamma'} e^{-\lambda} \sigma_{-4}(\xi;\lambda) d\lambda \big)d\xi$ agrees with
the coefficient $a_2(\Delta_g)$ in the short time asymptotics, 
\begin{equation}
 \Tr\big[ b^\circ e^{-t\Delta_g}\big] \sim t^{-\frac{n}2}\sum_{j\geq 0} t^{j} \tau\big[ b a_{2j}(\Delta_g)\big], \qquad b\in \cA_\theta. 
\end{equation}
 Therefore, from~(\ref{eq:curvature.scalar-curvature-symbol-heat}) we also can infer that 
 \begin{equation*}
 a_2(\Delta_g)= - \frac{1}{6}(4\pi)^{-\frac{n}2}\kappa(g). 
\end{equation*}
In particular, when $\theta=0$ we recover the well-known formula for $a_2(\Delta_g)$ in terms of $\kappa(g)$ (see~\cite{Gi:CRC95}). This shows the consistency of our normalization constants. 
\end{remark}
 
\begin{remark}\label{sec:curvature.2D}
 We obtain from~(\ref{eq:curvature.scalar-curvature-symbol}) the scalar curvature of NC 2-tori by analytic continuation of $\Gamma(\frac{n}2-1)\lambda^{-\frac{n}2+1}$ at $n=2$. Namely, 
\begin{equation*}
  \kappa(g)=-\frac{12}{2i \pi} \int_{\bS^{1}} \int_\gamma \sigma_{-4}(\xi;\lambda)d\xi d\lambda .
\end{equation*}
This is consistent with the definition of the definition of the scalar curvature functional in dimension~2 (see~\cite[Eq.~(5.23)]{CM:JAMS14}). In particular,  in the same way as in Remark~\ref{rmk:curvature.heat}, for conformally flat NC 2-tori we recover the formulas for the scalar curvature in terms of $\sigma_{-4}(\xi;\lambda)$ of~\cite{CM:JAMS14, CT:Baltimore11}. 
\end{remark}

\begin{remark}
 Rosenberg~\cite{Ro:SIGMA13} introduced the analogue the Levi-Civita connection and Riemann curvature tensor for Riemannian metrics on NC tori. This leads us to an 
 alternative notion of scalar curvature for curved NC tori. 
\end{remark}

\appendix

\section{Proof of Equality~(\ref{eq:Quantized.scalar-curvature})}\label{sec:quantized-EH}
In this appendix we briefly sketch a proof of Eq.~(\ref{eq:Quantized.scalar-curvature}). Suppose that $(M^n,g)$ is a closed Riemannian manifold, and let $f\in C^\infty(M)$. In view of~(\ref{eq:quantized.extension-bint}) and of the fact that $ds=c_n^{-\frac{1}{n}}\Delta_g^{-\frac12}$ we have
\begin{equation}
 \bint f ds^{n-2}= \frac{1}{n} c_n^{-\frac{n-2}{n}}\Res \big(f \Delta_g^{-\frac{n}{2}+1}\big)=\frac{2}{n}c_n^{-\frac{n-2}{n}}\Res_{s=\frac{n}2-1} \Tr\big[ f\Delta_g^{-s}\big]. 
 \label{eq:EqEH.bint-residue-zeta} 
\end{equation}
As is well-known (see, e.g., \cite{GS:JGA96}), the residues of the zeta function $ \Tr\big[ f\Delta_g^{-s}\big]$ are related to the coefficients of the short time heat trace asymptotics, 
\begin{equation*}
 \Tr\big[ f e^{-t\Delta_g}\big] \sim_{t\rightarrow 0^+} \sum_{j\geq 0} t^{-\frac{n}2+j} \int_M f a_{2j}(\Delta_g)\nu(g). 
\end{equation*}
In particular, we have
\begin{equation}
 \Res_{s=\frac{n}2-1} \Tr\big[ f\Delta_g^{-s}\big]= \Gamma\big(\frac{n}{2}-1\big)^{-1}  \int_M f a_2(\Delta_g)\nu(g).
 \label{eq:EqEH.residue-zeta-a2}  
\end{equation}
The first few coefficients $a_{2j}(\Delta_g)$ are known (see~\cite{Gi:CRC95}). For $j=1$, we have
\begin{equation*}
 a_2(\Delta_g)=-\frac{1}{6} (4\pi)^{-\frac{n}2} \kappa(g). 
\end{equation*}
Combining this with~(\ref{eq:EqEH.bint-residue-zeta}) and~(\ref{eq:EqEH.residue-zeta-a2}) shows that $\bint f ds^{n-2}=-c_n'\int_M f\kappa(g)\nu(g)$ with $c_n'$ given by~(\ref{eq:quantized.cn'}). The proof is complete.  

%\clearpage
\section{$L^2$-Integration Formulas} \label{sec:L2-formulas} 
In this appendix we compare the integration formulas of this paper with the curved integration formula of~\cite{MPSZ:Preprint}. The latter is an $L^2$-version of the curved integration formula~(\ref{eq:curved.curved-formula}), in the sense that it holds for any element of $\cH_\theta=L^2(\T^n_\theta)$. The approach of~\cite{MPSZ:Preprint} is based on the trace formula for noncommutative tori of~\cite[Theorem~6.5]{MSZ:MA19} and a ``curved'' $\cL^{1,\infty}$-Cwikel estimate which is proved in~\cite{MPSZ:Preprint}. The approach to the trace formula in~\cite{MSZ:MA19} relies on a $C^*$-algebraic approach to the principal symbol~\cite{MSZ:MA19, SZ:JOT18}. In~\cite{MPSZ:Preprint} the ``curved'' $\cL^{1,\infty}$-Cwikel estimate is deduced from a flat $\cL^{1,\infty}$-Cwikel estimate via various operator theoretic considerations.

The point we would like to make in this appendix is that in order to get $L^2$-integration formulas we only need the flat $\cL^{1,\infty}$-Cwikel estimate of~\cite{MPSZ:Preprint}. We shall actually get an $L^2$-integration formulas for \psidos\ (see~Theorem~\ref{thm:L2.PsiDO-Cwikel} below). The main result of~\cite{MPSZ:Preprint} then follows from this as a mere special case. In particular, this allows us to bypass the $C^*$-algebraic considerations of~\cite{MSZ:MA19, SZ:JOT18} and the operator theoretic considerations of~\cite{MPSZ:Preprint} to get an even more general result. 

To wit note that if $x\in \cH_\theta$, then the left multiplication by $x$ maps $\cA_\theta$ to $\cH_\theta$, but in general this operator need not be bounded. In what follows we denote by $\|\cdot\|_2$ the norm of $\cH_\theta$. The flat  $\cL^{1,\infty}$-Cwikel estimate of~\cite{MPSZ:Preprint} can be stated as follows. 

\begin{proposition}[Flat  $\cL^{1,\infty}$-Cwickel Estimate~\cite{MPSZ:Preprint}] \label{app:L2.flat-Cwikel} 
 For every $x\in \cH_\theta$ the operator $x(1+\Delta)^{-\frac{n}2}$ is bounded and weak-trace class. Moreover, there is $C>0$ independent of $x$ such that
\begin{equation}
 \big\|x(1+\Delta)^{-\frac{n}{2}}\big\|_{1,\infty}\leq C \|x\|_{2}.
 \label{eq:L2.flat-Cwikel} 
\end{equation}
\end{proposition}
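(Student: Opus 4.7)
The strategy is a spectral dyadic decomposition of $(1+\Delta)^{-n/2}$ combined with a rearrangement argument for singular values, which reduces the proof to a single sharp block-wise operator bound. For each integer $k\geq 0$, set $S_k:=\{j\in\Z^n : 2^k\leq 1+|j|^2<2^{k+1}\}$ and let $Q_k$ denote the orthogonal projection of $\cH_\theta$ onto $\op{Span}\{U^j : j\in S_k\}$. Lattice-point counting gives $r_k:=\op{rk}(Q_k)\sim 2^{kn/2}$, and on $Q_k(\cH_\theta)$ the operator $(1+\Delta)^{-n/2}$ acts as scalars of size $\sim 2^{-kn/2}$. The key elementary input is that right multiplication by $U^j$ is isometric on $\cH_\theta$, so $\|xU^j\|_2=\|x\|_2$ for every $j\in\Z^n$. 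With $T_k:=x(1+\Delta)^{-n/2}Q_k$, this yields
\begin{equation*}
\|T_k\|_{\mathrm{HS}}^2\ =\ \sum_{j\in S_k}(1+|j|^2)^{-n}\|x\|_2^2\ \lesssim\ 2^{-kn/2}\|x\|_2^2,
\end{equation*}
and summing in $k$ already establishes that $T:=x(1+\Delta)^{-n/2}$ is Hilbert--Schmidt, hence bounded, with $\|T\|\leq\|T\|_{\mathrm{HS}}\lesssim\|x\|_2$.

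To upgrade boundedness to the weak-trace-class estimate I would apply the Ky Fan--type inequality
\begin{equation*}
\mu_{r_0+\cdots+r_{K-1}}(T)\ \leq\ \Big\|\sum_{k\geq K}T_k\Big\|\ \leq\ \sum_{k\geq K}\|T_k\|,
\end{equation*}
obtained by approximating $T$ by its rank-$(r_0+\cdots+r_{K-1})$ truncation $\sum_{k<K}T_k$. Since $\sum_{k<K}r_k\sim 2^{Kn/2}$, the target bound $\mu_\ell(T)=\op{O}(\ell^{-1}\|x\|_2)$ is equivalent to the block estimate $\|T_k\|\lesssim 2^{-kn/2}\|x\|_2$. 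The naive bound $\|T_k\|\leq\|T_k\|_{\mathrm{HS}}\lesssim 2^{-kn/4}\|x\|_2$ misses the target by a factor of $2^{kn/4}$ and yields only $\scL^{2,\infty}$ decay.

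Bridging this $2^{kn/4}$ gap is the main obstacle and is precisely the Cwikel-type content of the statement. Note that for $x\in A_\theta$ one has the easy but wasteful bound $\|T_k\|\leq\|x\|\cdot 2^{-kn/2}$, giving $\|T\|_{1,\infty}\lesssim\|x\|_{A_\theta}$; together with the Hilbert--Schmidt estimate $\|T\|_{\mathrm{HS}}\lesssim\|x\|_2$ this provides two endpoints, but straight interpolation between them does not recover the desired $\|x\|_2\to\scL^{1,\infty}$ control, so a genuinely new input is required. Computing the matrix of $T$ in the $(U^m)$-basis via the commutation relations~(\ref{eq:NCtori.unitaries-relations}) yields $T_{m,k}=(1+|k|^2)^{-n/2}\,x_{m-k}\,\epsilon(m-k,k)$ with $|\epsilon|\equiv 1$; entrywise $|T|$ matches the matrix of the commutative Cwikel operator $M_{|x|}(1+\Delta)^{-n/2}$ on $\ell^2(\Z^n)$, whose $\scL^{1,\infty}$ bound by $\|x\|_2$ is the classical Birman--Solomyak theorem on $\T^n$. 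Since singular values are \emph{not} preserved under entrywise absolute values, the transfer to the noncommutative $T$ needs an extra ingredient, and the two natural routes are (i) a direct majorization-type comparison between the matrices of $T^*T$ and its commutative analogue, or (ii) an appeal to the general semifinite $\scL^{1,\infty}$-Cwikel estimate à la Lord--Sukochev--Zanin applied to the pair $(A_\theta,\tau)$. Once the bound is secured on the dense subspace $\cA_\theta$, linearity in $x$ together with the quasi-triangle inequality~(\ref{eq:Quantized.quasi-norm}) extends it to all $x\in\cH_\theta$ by density.
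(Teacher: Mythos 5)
This proposition is imported wholesale from the external reference~\cite{MPSZ:Preprint}; the paper itself offers no proof, so there is no internal argument to compare against and your proposal has to be judged on its own merits. Your dyadic setup is a correct and reasonable frame: the spectral blocks $S_k$, the isometry $\|xU^j\|_2=\|x\|_2$ giving the Hilbert--Schmidt bound and hence boundedness, the Ky Fan reduction $\mu_{r_0+\cdots+r_{K-1}}(T)\le\sum_{k\ge K}\|T_k\|$, and the observation that the sharp block bound $\|T_k\|\lesssim 2^{-kn/2}\|x\|_2$ would deliver $\scL^{1,\infty}$ whereas the Hilbert--Schmidt bound only gives $2^{-kn/4}$ and hence $\scL^{2,\infty}$. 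All of that is sound bookkeeping and correctly identifies where the real content of the estimate lives.

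The difficulty is that at precisely that point you stop. You do not prove the block estimate; you list two possible routes and leave both open, so the proposal is not a proof. Route (i) does not go through as described: singular values are not controlled by the singular values of the entrywise-modulus matrix (the all-ones matrix versus a sign matrix of Hadamard type is the standard counterexample), and the twist phases $\epsilon(m-k,k)$ in the matrix of $T$ are exactly what obstructs a painless transfer from the commutative model $M_{|x|}(1+\Delta)^{-n/2}$ — there is no majorization lemma that bridges this. Route (ii) is, in effect, citing the theorem you are trying to prove: the ``flat $\scL^{1,\infty}$-Cwikel estimate'' on $\T^n_\theta$ \emph{is} the output of the semifinite Cwikel program of~\cite{MPSZ:Preprint} (and its antecedents in~\cite{MSZ:MA19,SZ:JOT18}); naming the framework without carrying out the argument leaves the gap in place. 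Two smaller cautions: the commutative $p=1$ endpoint $\|M_f(1+\Delta)^{-n/2}\|_{1,\infty}\lesssim\|f\|_2$ is the delicate weak-$L^1$ Cwikel estimate, not the classical Birman--Solomyak bound for $p>1$, so even the commutative step you invoke needs care; and the concluding density argument is fine for a quasi-Banach target but only kicks in once the estimate is established on $\cA_\theta$, which is the step that is missing.
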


We observe we actually have a pseudodifferential version of the above Cwikel estimate. 

\begin{proposition}[Pseudodifferential $\cL^{1,\infty}$-Cwikel Estimate] 
 Let $P\in\Psi^{-n}(\T^n_\theta)$. For every $x\in \cH_\theta$ the operator $xP$ is bounded and weak trace class. Moreover, there is $C_P>0$ independent of $x$ such that
\begin{equation}
 \big\|xP\big\|_{1,\infty}\leq C_P \|x\|_{2}.
 \label{eq:L2.PsiDO-Cwikel} 
\end{equation}
\end{proposition}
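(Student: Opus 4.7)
The plan is to reduce the pseudodifferential Cwikel estimate~(\ref{eq:L2.PsiDO-Cwikel}) to the flat estimate~(\ref{eq:L2.flat-Cwikel}) of Proposition~\ref{app:L2.flat-Cwikel} by factorizing $P$ through $(1+\Delta)^{-n/2}$. To this end, I introduce the auxiliary operator $R := (1+\Delta)^{n/2} P$. By the remark following Lemma~\ref{lem:Curved.symbol-Deltagn} (which, via~\cite{LP:Resolvent}, gives $(1+\Delta)^{n/2}\in\Psi^n(\T^n_\theta)$) together with Corollary~\ref{cor:PsiDOs.composition-classical}, we have $R\in\Psi^0(\T^n_\theta)$. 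Proposition~\ref{prop:PsiDOs.boundedness} then ensures that $R$ extends uniquely to a bounded operator on $\cH_\theta$. Moreover, the composition law for complex powers of positive elliptic $\psi$DOs from~\cite{LP:Resolvent} yields $(1+\Delta)^{-n/2}(1+\Delta)^{n/2} = I$ on $\cA_\theta$, and therefore $(1+\Delta)^{-n/2} R = P$ as operators on $\cA_\theta$.

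The second step exploits this factorization. For $u\in\cA_\theta$, both $Ru$ and $(1+\Delta)^{-n/2}Ru$ lie in $\cA_\theta$, so the product $xPu = x\cdot[(1+\Delta)^{-n/2}Ru]$ is well-defined in $\cH_\theta$ through the bimodule structure of $\cH_\theta$ over $\cA_\theta$. Denote by $\widetilde{M}$ the bounded weak trace class extension of the map $u\mapsto x\cdot[(1+\Delta)^{-n/2}u]$ supplied by Proposition~\ref{app:L2.flat-Cwikel}; by density of $\cA_\theta$ in $\cH_\theta$ and uniqueness of continuous extensions, $\widetilde{M}$ agrees with this map on $\cA_\theta$. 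Consequently, on $\cA_\theta$ we have $xP = \widetilde{M}\circ R$, and since the right-hand side extends to a bounded operator on $\cH_\theta$, this provides the required bounded extension of $xP$. The ideal property of the weak trace class quasi-norm recorded in~(\ref{eq:Quantized.properties-mun2}) then yields
\begin{equation*}
\|xP\|_{1,\infty} \leq \bigl\|x(1+\Delta)^{-n/2}\bigr\|_{1,\infty}\,\|R\|_{\cL(\cH_\theta)} \leq C\,\|R\|_{\cL(\cH_\theta)}\,\|x\|_2,
\end{equation*}
where $C$ is the constant from~(\ref{eq:L2.flat-Cwikel}). Setting $C_P := C\,\|R\|_{\cL(\cH_\theta)}$ completes the argument.

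The main (and only mildly delicate) point is to verify that the various compositions and bounded extensions are compatible on the dense subspace $\cA_\theta$. This is harmless here because each of $P$, $R$, and $(1+\Delta)^{-n/2}$ preserves $\cA_\theta$, and so all identities can first be checked on $\cA_\theta$ and then extended by continuity. Once this factorization is in place, the result is a direct consequence of the flat Cwikel estimate and the $\psi$DO composition calculus, with no need for the $C^*$-algebraic principal symbol machinery of~\cite{MSZ:MA19, SZ:JOT18} or the operator theoretic considerations of~\cite{MPSZ:Preprint}.
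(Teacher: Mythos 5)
Your proposal is correct and follows essentially the same route as the paper: factor $xP = x(1+\Delta)^{-n/2}\cdot (1+\Delta)^{n/2}P$, note that $(1+\Delta)^{n/2}P$ is a bounded zeroth-order $\Psi$DO by Corollary~\ref{cor:PsiDOs.composition-classical} and Proposition~\ref{prop:PsiDOs.boundedness}, and then invoke the flat Cwikel estimate~(\ref{eq:L2.flat-Cwikel}) together with the ideal property of $\|\cdot\|_{1,\infty}$. The only difference is that you are somewhat more explicit about verifying the factorization on the dense subspace $\cA_\theta$, which the paper leaves implicit.
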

\begin{proof}
  As $P$ has order~$-n$, the operator $(1+\Delta)^{\frac{n}{2}}P$ is a zeroth order \psido\ by Corollary~\ref{cor:PsiDOs.composition-classical}, and so it is bounded by Proposition~\ref{prop:PsiDOs.boundedness}. Let $x\in \cH_\theta$. The first part of Proposition~\ref{app:L2.flat-Cwikel} ensures us that $x(1+\Delta)^{-\frac{n}2}$ is bounded and weak-trace class, and so the operator $xP=x(1+\Delta)^{-\frac{n}2}\cdot (1+\Delta)^{\frac{n}{2}}P$ is in the ideal $\scL^{1,\infty}$. Moreover, we have 
\begin{equation*}
 \|xP\|_{1,\infty} \leq \| x(1+\Delta)^{-\frac{n}2} \cdot (1+\Delta)^{\frac{n}{2}}P\|_{1,\infty} \leq  \| x(1+\Delta)^{-\frac{n}2}\|_{1,\infty}  \|(1+\Delta)^{\frac{n}{2}}P\|. 
\end{equation*}
 Combining this with~(\ref{eq:L2.flat-Cwikel}) gives the result.
\end{proof}

\begin{remark}
 Applying the above result to $P=(1+\Delta_g)^{-\frac{n}2}$ allows us to recover the curved $\cL^{1,\infty}$-Cwikel estimate of~\cite{MPSZ:Preprint}. 
\end{remark}

We are now in a position to get an $L^2$-version of Corollary~\ref{cor:Trace-thm.super-integration-formula}. 

\begin{theorem}\label{thm:L2.PsiDO-Cwikel}
 Let $P\in\Psi^{-n}(\T^n_\theta)$. For every $x\in \cH_\theta$ the operator $xP$ is strongly measurable, and we have
\begin{equation}
 \bint x P= \frac{1}{n} \tau\big[xc_P],
 \label{eq:L2.PsiDO-integration}
\end{equation}
where $c_P$ is defined as in~(\ref{eq:NCR.cP}). 
\end{theorem}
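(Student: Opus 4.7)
The strategy is to extend Corollary~\ref{cor:Trace-thm.super-integration-formula} (which covers $a\in A_\theta$) to $x\in \cH_\theta$ by a density-plus-continuity argument, using the pseudodifferential Cwikel estimate~(\ref{eq:L2.PsiDO-Cwikel}) as the essential analytic input.

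First, the Cwikel estimate exhibits $x\mapsto xP$ as a bounded linear map $\cH_\theta\to \scL^{1,\infty}$, of norm at most $C_P$. For general $x\in \cH_\theta$ we interpret $xP$ as the unique continuous extension of the map $a\mapsto aP$ from the dense subspace $\cA_\theta\subset\cH_\theta$ to $\scL^{1,\infty}$; the Cwikel inequality applied to differences ensures this extension is well defined, and it agrees with the operator-theoretic product whenever $x$ happens to lie in $A_\theta$, since $A_\theta$ acts boundedly on $\cH_\theta$ and contains the dense subalgebra $\cA_\theta$.

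Next, any normalized positive trace $\varphi$ on $\scL^{1,\infty}$ is continuous by Proposition~\ref{prop:Quantized.continuity-positive-trace}. Composing with the bounded map $x\mapsto xP$ shows that $x\mapsto \varphi(xP)$ is a continuous linear functional on $\cH_\theta$. The right-hand side of~(\ref{eq:L2.PsiDO-integration}) is continuous too: using the inner product $\scal{u}{v}=\tau(uv^*)$ on $\cH_\theta$, one has $\tau[xc_P]=\scal{x}{c_P^*}$, and since $c_P\in \cA_\theta\subset\cH_\theta$ by~(\ref{eq:NCR.cP}), Cauchy-Schwarz gives $|\tau[xc_P]|\leq \|c_P^*\|_2\,\|x\|_2$.

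Finally, Corollary~\ref{cor:Trace-thm.super-integration-formula} supplies $\varphi(aP)=\tfrac{1}{n}\tau[ac_P]$ for every $a\in \cA_\theta$ and every normalized positive trace $\varphi$. Since both sides are continuous linear functionals on $\cH_\theta$ that coincide on the dense subspace $\cA_\theta$, they must coincide on all of $\cH_\theta$. This yields $\varphi(xP)=\tfrac{1}{n}\tau[xc_P]$ for every $x\in\cH_\theta$ and every normalized positive trace $\varphi$, which is precisely the strong measurability of $xP$ together with the formula~(\ref{eq:L2.PsiDO-integration}). No serious obstacle stands in the way once the Cwikel estimate is in hand; the only conceptual point requiring attention is the consistency of the two candidate definitions of $xP$, which is immediate from the density of $\cA_\theta$ in both $A_\theta$ and $\cH_\theta$.
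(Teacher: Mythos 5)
Your proof is correct and follows essentially the same route as the paper: reduce to showing $\varphi(xP)=\tfrac1n\tau[xc_P]$ for every normalized positive trace $\varphi$, note both sides are continuous in $x\in\cH_\theta$ (the left via Proposition~\ref{prop:Quantized.continuity-positive-trace} together with the Cwikel estimate~(\ref{eq:L2.PsiDO-Cwikel}), the right since $c_P\in\cA_\theta\subset\cH_\theta$), and conclude by density of $A_\theta$ (or $\cA_\theta$) in $\cH_\theta$ using Corollary~\ref{cor:Trace-thm.super-integration-formula}. The only minor addition in your write-up is the explicit discussion of how $xP$ is defined for $x\in\cH_\theta$, which the paper subsumes into the statement of the pseudodifferential Cwikel estimate itself.
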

\begin{proof}
In the same way as in the proof of Corollary~\ref{cor:Trace-thm.super-integration-formula}, in order to show that $xP$ is strongly measurable and satisfies the trace formula~(\ref{eq:L2.PsiDO-integration}), we only have to show that, for every positive trace $\varphi$ on $\scL_{1,\infty}$, we have
\begin{equation*}
 \varphi(xP)=\frac{1}{n}\tau\big[xc_P\big]. 
\end{equation*}
We know  by Corollary~\ref{cor:Trace-thm.super-integration-formula} that the equality holds for all $x\in A_\theta$. It follows from the continuity of $\varphi$ and the estimate~(\ref{eq:L2.PsiDO-Cwikel}) that the l.h.s.~is a continuous linear form on $\cH_\theta$. As $c_P\in \cA_\theta$ the r.h.s.~is continuous as well. Combining this with the density of $A_\theta$ in $\cH_\theta$ we then deduce that the equality holds for any $x\in \cH_\theta$. This completes the proof. 
\end{proof}

By using Theorem~\ref{thm:L2.PsiDO-Cwikel} and arguing along the same lines as that of the proof of Theorem~\ref{thm:Curved.integration-formula} we recover the main result of~\cite{MPSZ:Preprint} in the following form. 

\begin{corollary}[\cite{MPSZ:Preprint}] Let $g\in \GL_n(\cA_\theta^\R)$ be a Riemannian metric on $\T^n_\theta$. We have 
\begin{equation*}
 \bint x^\circ \Delta_g^{-\frac{n}2} =\bcn \tau\big[x\tilde{\nu}(g)\big] \qquad \forall x \in \cH_g^\circ. 
\end{equation*}
 \end{corollary}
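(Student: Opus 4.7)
The plan is to follow closely the pattern of the proof of Theorem~\ref{thm:Curved.integration-formula}, but with Theorem~\ref{thm:L2.PsiDO-Cwikel} replacing Corollary~\ref{cor:Trace-thm.super-integration-formula}. The intertwining unitary $U = (2\pi)^{\frac{n}{2}}\sqrt{\nu(g)}:\cH_g^\circ \to \cH_\theta$ conjugates $a^\circ$ into left multiplication by $a$ and $\Delta_g^{-\frac{n}{2}}$ into $\cDelta_g^{-\frac{n}{2}}$, so any positive trace $\varphi$ on $\scL^{1,\infty}$ satisfies $\varphi(a^\circ \Delta_g^{-\frac{n}{2}}) = \varphi(a\cDelta_g^{-\frac{n}{2}})$ exactly as in~(\ref{eq:curved.trace}). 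This reduces the problem to an $L^2$-integration formula on $\cH_\theta$ for the operator $\cDelta_g^{-\frac{n}{2}}$.

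Next, I would apply Theorem~\ref{thm:L2.PsiDO-Cwikel} to $P = \cDelta_g^{-\frac{n}{2}}$. By Lemma~\ref{lem:Curved.symbol-Deltagn}, $P$ belongs to $\Psi^{-n}(\T_\theta^n)$ with principal symbol $|\xi|_g^{-n}$, so
\[
c_P = \int_{\bS^{n-1}}|\xi|_g^{-n}\,d^{n-1}\xi = |\bS^{n-1}|\,\tilde{\nu}(g) = n\bcn\tilde{\nu}(g).
\]
Theorem~\ref{thm:L2.PsiDO-Cwikel} then yields $\bint y\,\cDelta_g^{-\frac{n}{2}} = \bcn\,\tau[y\tilde{\nu}(g)]$ for every $y \in \cH_\theta$, which is precisely the desired formula transferred to $\cH_\theta$.

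The main technical point will be making the notation $x^\circ\Delta_g^{-\frac{n}{2}}$ precise for $x \in \cH_g^\circ$ not in $\cA_\theta$ and transferring the identity back. The key observation is that $\nu(g)\in \cA_\theta^{++}$ is bounded and positively invertible, so on $\cA_\theta$ the norms $\|\cdot\|_{\cH_g^\circ}$ and $\|\cdot\|_{\cH_\theta}$ are equivalent; combining the unitary equivalence with the Cwikel-type estimate~(\ref{eq:L2.PsiDO-Cwikel}) gives
\[
\|a^\circ \Delta_g^{-\frac{n}{2}}\|_{1,\infty} \leq C\|a\|_{\cH_g^\circ} \qquad \forall a \in \cA_\theta.
\]
Thus $a \mapsto a^\circ \Delta_g^{-\frac{n}{2}}$ extends by continuity to $\cH_g^\circ$, and we \emph{define} $x^\circ\Delta_g^{-\frac{n}{2}}$ for $x\in \cH_g^\circ$ as this $\scL^{1,\infty}$-limit. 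Since $\tilde{\nu}(g)\in \cA_\theta$, Cauchy--Schwarz shows that $a \mapsto \tau[a\tilde{\nu}(g)]$ extends continuously from $\cA_\theta$ to $\cH_g^\circ$ as well. Both sides of the claimed identity are therefore continuous in $x$, and since equality holds on the dense subspace $\cA_\theta$ by Theorem~\ref{thm:Curved.integration-formula}, it extends to all of $\cH_g^\circ$ by density, completing the proof.
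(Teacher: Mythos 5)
Your proposal is correct and follows essentially the same route the paper sketches: transfer via the intertwining unitary $(2\pi)^{n/2}\nu(g)^{1/2}:\cH_g^\circ\to\cH_\theta$, apply Theorem~\ref{thm:L2.PsiDO-Cwikel} with $P=\cDelta_g^{-n/2}$ using the principal symbol computation from Lemma~\ref{lem:Curved.symbol-Deltagn}, and transfer back. The observations in your last paragraph (that $\nu(g)\in\cA_\theta^{++}$ makes $\|\cdot\|_{\cH_g^\circ}$ equivalent to $\|\cdot\|_{\cH_\theta}$, that the curved Cwikel bound $\|a^\circ\Delta_g^{-n/2}\|_{1,\infty}\leq C\|a\|_{\cH_g^\circ}$ follows from the flat one via unitary invariance of $\|\cdot\|_{1,\infty}$, and that $a\mapsto\tau[a\tilde\nu(g)]$ is $\cH_g^\circ$-continuous by Cauchy--Schwarz) are exactly the points the paper leaves implicit when saying ``arguing along the same lines''; note only that these density considerations are already subsumed in invoking Theorem~\ref{thm:L2.PsiDO-Cwikel} for general $y\in\cH_\theta$, so your second and third steps each independently close the argument.
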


\end{document}